\declaretheorem{theorem}
\declaretheorem[name=Definition]{definition}
\declaretheorem[name=Proposition]{proposition}
\declaretheorem[name=Lemma]{lemma}
\declaretheorem[name=Corollary]{corollary}
\declaretheorem[name=Example]{example}
\declaretheorem[name=Remark]{remark}
\newcommand{\ma}{\textrm{MA}}
\newcommand{\supp}{\textrm{supp}}
\newcommand{\MA}{\ma}
\newcommand{\SL}{\textrm{SL}}
\newcommand{\R}{\mathbb{R}}
\newcommand{\vol}{\textrm{Vol}}
\newcommand{\id}{\textrm{id}}
\newcommand{\Ric}{\textrm{Ric}}
\title{An optimal transport approach to Monge-Amp\`ere equations on compact Hessian manifolds}
\author{Jakob Hultgren, Magnus \"{O}nnheim}
\affil{Department of Mathematical Sciences, Chalmers University of Technology and Gothenburg University, Gothenburg}
\date{\today}
\begin{document}

\maketitle

\begin{abstract}
In this paper we consider Monge-Amp\`ere equations on compact Hessian manifolds, or equivalently Monge-Amp\`ere equations on certain unbounded convex domains $\Omega\subseteq \mathbb{R}^n$, with a periodicity constraint given by the action of an affine group. In the case where the affine group action is volume-preserving, i.e., when the manifold is \emph{special}, the solvability of the corresponding Monge-Amp\`ere equation was established using the continuity method in \cite{chengyau1980}. In the general case we set up a variational framework involving certain dual manifolds and a generalization of the classical Legendre transform. We give existence and uniqueness results, elaborate on connections to optimal transport and quasi-periodic tilings of convex domains.

\end{abstract}

\tableofcontents

\section{Introduction}
Let $\Omega$ be an open convex subset of $\R^n$. Let $\Pi$ be a group that acts freely on $\Omega$ by affine transformations in such a way that there is a compact $K\subseteq \Omega$ satisfying $\Omega = \Pi K$, i.e. $M=\Omega/\Pi$ is a smooth compact manifold. Assume also that $\Omega$ admits a proper convex function, $\Phi_0$, such that its Hessian tensor 
$$ \frac{d^2\Phi_0}{dx_i dx_j} dx_i\otimes dx_j $$
is $\Pi$-invariant. The action on $\Omega\subset \R^n$ induces an action on $d\Phi_0(\Omega)\subset (\R^n)^*$, where $d\Phi_0:\Omega \rightarrow (\R^n)^*$ is the usual derivative of $\Phi_0$. This action is defined by the relation
\begin{equation}
d\Phi_0(x)=p \Leftrightarrow d\Phi_0(\gamma(x)) = \gamma.p 
\label{eq:action}
\end{equation}
where $p\in d\Phi(\Omega)$ and $\gamma\in \Pi$. Let $\mu$ and $\nu$ be locally finite $\Pi$-invariant measures\footnote{Throughout this paper all measures are assumed to be Borel} on $\Omega$ and $d\Phi(\Omega)$ respectively. Assuming $\mu$ has a density $f$ and $\nu$ has a density $g$ we will consider the equation

\begin{equation}
g\left(d\Phi(x)\right) \det\left(\Phi_{ij}(x)\right) = c f(x),
\label{eq:m-a_equation}
\end{equation}
for a suitable constant $c>0$. We will demand of a solution that it is convex and that its Hessian tensor is invariant under $\Pi$. We will say that an absolutely continuous measure $\mu = \rho dx$ is \emph{non-degenerate} if for any compact $K\subset \Omega$ it holds that $\rho \geq c_K > 0$. Recall also that a convex (not necessarily smooth) function, $\Phi$, is an \emph{Alexandrov solution} of \eqref{eq:m-a_equation} if the multivalued map $d\Phi$ satisfies
$$ \int_{A} \mu = c\int_{d\Phi(A)} \nu $$
for all measurable $A\subset \Omega$. Note that in this setting $\mu$ and $\nu$ does not have to be absolutely continuous.    
Our main theorem is
\begin{theorem}
\label{thm:main}
Let $\Omega$, $\Pi$ and $\Phi_0$ satisfy the conditions above. Assume that $\mu$ and $\nu$ are locally finite $\Pi$-invariant measures on $\Omega$ and $d\Phi(\Omega)$ respectively. Then there is a unique constant $c>0$ such that \eqref{eq:m-a_equation} admits an Alexandrov solution of the form $\Phi = \Phi_0+u$ for a $\Pi$-invariant function $u$. If $\nu$ is absolutely continuous with full support, then the solution is unique up to an additive constant. Moreover if $\mu$ and $\nu$ are both absolutely continuous with non-degenerate $C^{k,\alpha}$ densities, then $\Phi$ is $C^{k+2,\alpha}$.
\end{theorem}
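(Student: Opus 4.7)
The plan is to follow a variational approach based on optimal transport. Equation~\eqref{eq:m-a_equation} is the pointwise expression of the statement that $d\Phi$ transports $\mu$ to $c\nu$ as a Brenier-type map; the $\Pi$-invariance of the Hessian ensures that this map descends to the compact quotients $M = \Omega/\Pi$ and $M^\ast = d\Phi_0(\Omega)/\Pi$. The constant $c$ is pinned down by mass balance on any fundamental domain $D \subset \Omega$:
$$
\int_D d\mu \;=\; c\int_{d\Phi(D)} d\nu,
$$
and the right-hand side should be independent of the admissible $\Phi = \Phi_0 + u$ because $d\Phi(D)$ and $d\Phi_0(D)$ differ by a $\Pi$-equivariant deformation under which $\nu$ is invariant.

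For existence, I would introduce a Kantorovich-type functional of the schematic form
$$
F(u) \;=\; J_\mu(u) \;-\; c\, E_\nu(u),
$$
defined on the space of $\Pi$-invariant $u$ such that $\Phi_0+u$ is convex, where $J_\mu(u) = \int_M u\,d\mu$ (well-defined since $u$ descends to $M$) and $E_\nu(u)$ is a Legendre-transform counterpart defined on the dual manifold $M^\ast$. The Euler--Lagrange equation of $F$ should reproduce \eqref{eq:m-a_equation} in the Alexandrov sense. A minimizer is then obtained by the direct method: lower semi-continuity of $F$ under $L^1_{loc}$ convergence on a fundamental domain and coercivity modulo the unavoidable symmetry $u \mapsto u + \mathrm{const}$. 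Compactness of the quotient $M$ is what makes these steps tractable despite $\Omega$ being unbounded.

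Uniqueness up to additive constants under the full-support hypothesis on $\nu$ then follows from strict convexity of $E_\nu$ along linear interpolations in the convex cone of admissible $u$: two minimizers with non-constant difference would contradict strict convexity, via the usual fact that the Brenier map from $\mu$ to $c\nu$ is $\mu$-almost everywhere unique. For the regularity part, I would localize near an arbitrary point of $\Omega$: in a small convex neighborhood, the equation becomes a classical Monge--Amp\`ere equation with $C^{k,\alpha}$ densities bounded above and bounded away from zero. Caffarelli's interior regularity theorem yields $\Phi \in C^{2,\alpha}$ locally, and Schauder bootstrapping upgrades this to $C^{k+2,\alpha}$.

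The most delicate step is the definition and analysis of $E_\nu$. Since $\Phi_0 + u$ is not itself $\Pi$-invariant---only its Hessian is---one cannot simply integrate its Legendre transform against $\nu$ on a fundamental domain. The correct framework, as hinted at in the abstract, is to use a dual Hessian manifold structure on $M^\ast$ and extract a canonical $\Pi$-invariant quantity from the Legendre transform. A secondary obstacle is coercivity: controlling $u$ on $M$ modulo constants will likely require a Sobolev or Moser--Trudinger-type inequality adapted to convex functions with $\Pi$-invariant Hessian, where the convexity constraint provides the necessary rigidity to pass from bounded energy to bounded oscillation.
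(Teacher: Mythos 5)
Your proposal follows essentially the same route as the paper: a Kantorovich-type functional $F(u)=\int_M u\,d\mu+\int_{M^*}\bigl[(u+\phi_0)^*-\phi_0^*\bigr]d\nu$ built from a Legendre transform valued on the dual Hessian manifold, existence by the direct method, uniqueness from convexity of the dual term along linear interpolations, and regularity from Caffarelli's theory plus bootstrapping. The only notable difference is that the compactness modulo constants you anticipate needing a Moser--Trudinger-type inequality for is obtained in the paper by an elementary convexity estimate (integrating $\nabla^2 u\geq-\nabla^2\phi_0$ along affine segments in a fundamental domain), which yields uniform $C^0$ and Lipschitz bounds directly.
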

%

\subsection{Geometric formulation}
Theorem~\ref{thm:main} is a reformulation of a geometric result (Theorem~\ref{thm:intro-geom}) regarding Monge-Amp\`ere equations on compact Hessian manifolds. To state it we will need some terminology from affine geometry. An affine manifold is a topological manifold $M$ equipped with a distinguished atlas $(U_i,x^i)$ such that the transition maps $x^i \circ (x^j)^{-1}$ are affine. Equivalently, an affine manifold is a smooth manifold equipped with a flat torsion-free connection $\nabla$ on $TM$. The coordinates in the distinguished atlas are often referred to as \emph{affine coordinates} on $M$. A function, $f$, on $M$ is said to be affine (or convex) if it is affine (convex) in the affine coordinates or, equivalently, its second derivative with respect to $\nabla$
\begin{equation} 
\nabla df = \frac{d^2f}{dx_i dx_j}dx_i\otimes dx_j \label{eq:hessian} 
\end{equation}
vanishes (is semi-positive). A Hessian metric on an affine manifold $M$ is a Riemannian metric $g$ which is locally of the form \eqref{eq:hessian}. In other words, there is a covering $\{U_i\}$ of $M$ and smooth functions $\{\phi_i: U_i\rightarrow \R\}$ such that 
\begin{equation}
	g = \nabla d\phi_i.
	\label{eq:hessian-metric}
\end{equation}
A Hessian manifold, $(M,\{\phi_i\})$, is an affine manifold $M$ together with a Hessian metric $\{\phi_i\}$. For short we write $\phi$ instead of the collection $\{\phi_i\}$. Note that as a consequence of \eqref{eq:hessian-metric}, we have that $\phi_i - \phi_j$ is locally affine where it is defined. We will explain in Section~\ref{sec:geometric} how the data $\{\phi_i - \phi_j\}$ define a principal $\R$-bundle $L\to M$ that respects the affine structure on $M$ (affine $\R$-bundle for short). We will say that Hessian metrics defining the same affine $\R$-bundle lie in the same \emph{K\"ahler class}, and will occasionally refer to a Hessian manifold only using the data $(M,L)$ without giving reference to a specific Hessian metric.

An affine manifold is special if the transition maps preserve the Euclidean volume form on $\R^n$ or, equivalently, if the holonomy associated to to $\nabla$ sits in $\SL(n,\R)$. An important property of special Hessian manifolds is that the real Monge-Amp\`ere measure of the Hessian metric
\begin{equation}
\label{eq:special_ma}
\MA(\phi) = \det \left(\frac{d^2\phi}{dx_i dx_j}\right) dx_1\wedge \ldots \wedge dx_j 
\end{equation}
is invariant under coordinate transformations and globally defines a measure on $M$. Indeed, differential equations involving this operator has been studied in a number of papers. Existence and uniqueness for associated Monge-Amp\`ere equations on special Hessian manifolds were first given by Cheng and Yau \cite{chengyau1980}. Another notable work is \cite{caffarelli2001regularity}. In this paper we will explain that, although the expression in \eqref{eq:special_ma} is only well-defined when $M$ is special, it is possible to, by fixing an absolutely continuous measure $\nu$ on a certain dual manifold, define a Monge-Amp\`ere operator on general Hessian manifolds. More precisely, we will explain that the data $(M,L)$ defines a dual Hessian manifold $M^*$. This is the same construction found in the literature on the Strominger-Yau-Zaslow picture of mirror symmetry (see for example pages 428--429 in \cite{mirrorsymmetry}). Given a measure $\nu$ on $M^*$ and a Hessian metric $\phi$ on $M$ we define a $\nu$-Monge-Amp\`ere measure $\MA_\nu(\phi)$ (see Definition~\ref{def:numa}) and consider the equation 
\begin{equation}
\label{eq:numaeq}
\MA_\nu(\phi) = \mu. 
\end{equation}
for measures $\mu$ on $M$. We will also introduce a concept of weak solutions to this equation. The majority of Section~\ref{sec:ma} is devoted to the proof of the following main theorem. 
\begin{restatable}{theorem}{geom-mav}
	Let $(M,\phi_0)$ be a compact Hessian manifold. Let $\mu$ and $\nu$ be probability measures on $M$ and $M^*$, respectively. Then there is a continuous function $u$ on $M$, such that $\phi = \phi_0 + u $ solves \eqref{eq:numaeq} in the weak sense. If $\nu$ is absolutely continuous and $\phi_0$ and $\phi_1$ are solutions to \eqref{eq:numaeq}, then $\phi_1-\phi_0$ is constant. If, in addition, $\mu$ and $\nu$ are absolutely continuous with non-degenerate $C^{k,\alpha}$-densities for some $k\in \mathbb{N}$ and $\alpha\in (0,1)$, then the solution is $C^{k+2,\alpha}$.
	\label{thm:intro-geom}
\end{restatable}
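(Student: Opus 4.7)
The plan is to deduce this geometric theorem from Theorem~\ref{thm:main} by passing to the universal cover. A compact Hessian manifold $(M,\phi_0)$ is developed by its flat connection onto an open convex $\Omega \subset \R^n$, with affine holonomy realising $\pi_1(M)$ as a group $\Pi$ acting freely, affinely, and cocompactly on $\Omega$. The local potentials of $\phi_0$ glue along their affine differences into a global proper convex function $\Phi_0 : \Omega \to \R$ whose Hessian is $\Pi$-invariant, and the development of the dual manifold $M^*$ identifies it with $d\Phi_0(\Omega)/\Pi$ endowed with the action in \eqref{eq:action}. The probability measures $\mu$ on $M$ and $\nu$ on $M^*$ lift to locally finite $\Pi$-invariant measures on these spaces, placing the problem in the setting of Theorem~\ref{thm:main}, which supplies an Alexandrov solution $\Phi = \Phi_0 + u$ with $u$ $\Pi$-invariant; matching probability normalisations forces the constant $c$ to be $1$. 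A direct check that $u$ descends continuously to $M$, and that Alexandrov solvability on $\Omega$ is equivalent to the weak formulation of $\MA_\nu(\phi)=\mu$ used in Section~\ref{sec:ma}, then closes the argument.

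The main work is the proof of Theorem~\ref{thm:main}, and here I would use a variational formulation in the spirit of Kantorovich duality. Consider the space of $\Pi$-invariant convex perturbations $u$ of $\Phi_0$ modulo additive constants, and define
$$ J(u) = \int_M u \, d\mu + c \int_{M^*} (\Phi_0 + u)^{\star} \, d\nu, $$
where $(\cdot)^{\star}$ denotes the generalised Legendre transform introduced earlier in the paper --- a $\Pi$-equivariant construction yielding a function on $d\Phi_0(\Omega)$ that, after the normalisation prescribed by the affine $\R$-bundle $L$, descends to $M^*$. The constant $c$ is fixed by the balance condition $\mu(M) = c\,\nu(M^*)$. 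One then shows that $J$ is convex, lower semicontinuous with respect to locally uniform convergence on $\Omega$, and coercive modulo additive constants; the direct method via Arzel\`a--Ascoli produces a minimiser. Taking the first variation of $J$ against a smooth $\Pi$-invariant perturbation recovers the Alexandrov identity $\mu(A) = c\,\nu(d\Phi(A))$, so minimisers solve \eqref{eq:m-a_equation}.

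Uniqueness, when $\nu$ is absolutely continuous with full support, follows from strict convexity of the Legendre transform along non-affine perturbations, which upgrades convexity of $J$ to strict convexity on the quotient by constants. Regularity reduces to a local statement: a bounded Alexandrov solution makes \eqref{eq:m-a_equation} a uniformly elliptic real Monge-Amp\`ere equation with $C^{k,\alpha}$ right-hand side bounded away from zero, and the $C^{k+2,\alpha}$ conclusion follows from Caffarelli's interior regularity theorem together with the standard Schauder bootstrap. The hard part will be coercivity of $J$ in this quasi-periodic setting: because $\Omega$ is noncompact and the lifted measures are infinite, the usual compactness arguments for Kantorovich functionals do not apply directly, and a minimising sequence must be controlled modulo an additive constant using only integrals over a fundamental domain. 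Converting properness of $\Phi_0$ into a quantitative oscillation bound --- through the dual behaviour of the generalised Legendre transform on $d\Phi_0(\Omega)$ --- is where I expect the central technical effort to lie.
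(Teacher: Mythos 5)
Your proposal takes essentially the same route as the paper: both prove the result by minimizing the affine Kantorovich functional built from the generalized Legendre transform, obtain compactness of normalized convex sections (the paper's $C^0$ and Lipschitz estimates feeding Arzel\`a--Ascoli), characterize minimizers as weak solutions via the first variation, get uniqueness from convexity of $t\mapsto\psi_t^*$, and get regularity from Caffarelli's theory applied locally on the cover. The only cosmetic difference is that you deduce the geometric statement from Theorem~\ref{thm:main}, whereas the paper proves Propositions~\ref{thm:monge-ampere} and \ref{thm:weak_solutions} directly on $(M,L)$ and derives both theorems from them; the content is identical.
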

\begin{remark}
The constant $c$ in Theorem~\ref{thm:main} is determined by the fact that $\mu$ and $c\nu$ should define measures of equal mass on $\Omega/\Pi$ and $\Omega^*/\Pi$. In Theorem~\ref{thm:intro-geom} this obstruction is handled by demanding that both $\mu$ and $\nu$ are probability measures.
\end{remark}
\begin{remark}
    In \cite{chengyau1980} Cheng and Yau consider a certain type of Monge-Amp\`ere equations on non-special Hessian manifolds, namely equations of the form
    $$ \det(\phi_{ij}) = \rho^2 $$
    where $\rho$ is a density on $M$. In other words, they consider equations involving the expression $\det(\phi_{ij})$ which transforms as the square of a density on $M$. We stress that our approach is different. The $\nu$-Monge-Amp\`ere defines a measure on $M$ regardless if $M$ is special or not.
\end{remark}

It will follow from the construction that if $M$ is special then $M^*$ is special. If we choose $\nu$ as the canonical $\nabla$-parallel measure on $M^*$ then \eqref{eq:numaeq} reduces to the standard inhomogenous Monge-Amp\`ere equation on special manifolds considered in the literature. 

Finally, we remark that the local geometry of smooth measured metric spaces of the form $(M,\nabla d\phi,\mu)$ where $\phi$ and $\mu$ are related as in Theorem~\ref{thm:intro-geom}, have recently been studied by Klartag and Kolesnikov in \cite{klartag2016remarks}. It is interesting to note that our approach shows that a pair of measures $(\mu,\nu)$ with smooth densities on $M$ and $M^*$ determines a pair of measured metric spaces $(M,\nabla d\phi,\mu)$ and $(M^*,\nabla^* d\phi^*,\nu)$ of the form studied in \cite{klartag2016remarks} related by Legendre transform.

Let us finally point out that one of the motivations for developing the present approach to global Monge-Amp\`ere equations comes from Mirror Symmetry and tropical geometry (in particular the framework of the Strominger-Yau-Zaslow, Gross-Wilson and Kontsevich-Soibelman conjectures \cite{mirrorsymmetry}). In this framework dual affine (singular) manifolds appear as the ''large complex limits'' of ''mirror dual'' complex/symplectic manifolds and the corresponding K\"ahler-Einstein metrics (solving complex Monge-Amp\`ere equations) are expected to converge to solutions of real Monge-Amp\`ere equations on the singular affine manifolds in question. Hopefully, the present approach can be extended to such singular (and possibly non-compact) affine manifolds, but we leave this challenging problem for the future.

\subsection{Optimal Transport Interpretation}
One of the key points of the present paper is to show that equation \eqref{eq:numaeq} fits nicely into the theory of optimal transport. Recall that an optimal transport problem is given by two probability spaces $(X,\mu)$ and $(Y,\nu)$ together with a cost function $c:X\times Y\rightarrow \R$. We will explain in Section~\ref{sec:pairing} how the data $(M,L)$ determines a cost function $c=c_{(M,L)}:M\times M^*\rightarrow \R$. This means a Hessian manifold $(M,L)$ together with two measures $\mu$ and $\nu$ on $M$ and $M^*$ respectively determines an optimal transport problem. Moreover, by construction, the differentials of $\{\phi_i\}$, $x\mapsto d\phi_i|_x$, induces a diffeomorphism, which we will denote $d\phi$, from $M$ to $M^*$. We have the following theorem with respect to this interpretation.
\begin{restatable}{theorem}{opttransport}
Let $(M,L)$ be a compact Hessian manifold. Let $\mu$ and $\nu$ be probability measures on $M$ and $M^*$ respectively. Assume $\phi$ is a smooth strictly convex section of $L$ such that 
$$ \MA_\nu(\phi) = \mu. $$
Then $d\phi$ is the optimal transport map determined by $M,M^*,\mu,\nu$ and the cost function induced by $(M,L)$.
\label{thm:optimaltransport}
\end{restatable}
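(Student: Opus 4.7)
The plan is to run a Brenier-type argument, showing that $d\phi$ achieves equality in Kantorovich duality for the transport problem $(M, M^*, \mu, \nu, c_{(M,L)})$.

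First I would reinterpret the equation $\MA_\nu(\phi) = \mu$ as a push-forward statement. Since $\phi$ is smooth and strictly convex, $d\phi : M \to M^*$ is a diffeomorphism, and by the construction of $\MA_\nu$ in Definition~\ref{def:numa} (which, translated to the Euclidean cover, is the measure $A \mapsto \nu(d\phi(A))$), the equation $\MA_\nu(\phi) = \mu$ is equivalent to $(d\phi)_\ast \mu = \nu$. Consequently $\gamma_\phi := (\id, d\phi)_\ast \mu$ is a transport plan between $\mu$ and $\nu$ supported on the graph of $d\phi$.

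Second, I would use the pairing built in Section~\ref{sec:pairing}. In local affine coordinates the cost $c = c_{(M,L)}$ reduces, up to additive affine terms in $x$ and in $p$ that are absorbed into the affine $\R$-bundles $L$ and $L^*$, to the Euclidean pairing $-\langle x, p\rangle$. A smooth strictly convex section $\phi$ of $L$ therefore admits a Legendre transform $\phi^*$ which, precisely because the affine ambiguities in $\phi$ and the affine ambiguities in $c$ are dual in the sense built into the construction of $M^*$, patches to a global section of $L^*$ over $M^*$. Fenchel--Young then globalizes to
\[ \phi(x) + \phi^*(p) + c(x,p) \geq 0, \]
with equality iff $p = d\phi(x)$.

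Third I would invoke Kantorovich duality. For any coupling $\gamma \in \Pi(\mu, \nu)$,
\[ \int c \, d\gamma \;\geq\; -\int \phi \, d\mu - \int \phi^* \, d\nu, \]
and the right-hand side is a universal lower bound. For $\gamma_\phi$, the support lies in $\{p = d\phi(x)\}$ where Fenchel--Young holds with equality, so $\int c \, d\gamma_\phi = -\int \phi \, d\mu - \int \phi^* \, d\nu$. Hence $\gamma_\phi$ is optimal; being concentrated on a graph, it is induced by the transport map $d\phi$, as claimed.

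The main obstacle is the second step: verifying that $\phi + \phi^* + c$ is a well-defined non-negative function on $M \times M^*$, even though each of the three summands is only defined up to an affine ambiguity coming from the transition data of $L$, $L^*$, and from the pairing. Once the bookkeeping in Section~\ref{sec:pairing} is in place this is routine, but it is the point where the geometry of the dual Hessian manifold genuinely enters and turns the local Brenier picture into a global one.
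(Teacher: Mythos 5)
Your proof is correct, but it follows a genuinely different route from the paper's. The paper works on the dual side: it identifies the affine Kantorovich functional $F$ with the dual functional $J$ of the transport problem via Lemma~\ref{lemma:cleg} (showing $f^c=\phi^*-\phi_0^*$ and $d^cf=d\phi$ for $f=\phi-\phi_0$), asserts that the solution $\phi$ minimizes $F$, and then cites the general duality theorem (Theorem~\ref{thm:villani}, i.e.\ Villani's Theorem 5.10) to conclude that the optimal plan is concentrated on the graph of $d^cf=d\phi$. You instead run the primal calibration argument: read $\MA_\nu(\phi)=\mu$ as $(d\phi)_*\mu=\nu$, so $\gamma_\phi=(\id,d\phi)_*\mu$ is an admissible plan, and then verify directly that $\gamma_\phi$ attains the Fenchel--Young lower bound, which forces optimality. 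The ``main obstacle'' you flag --- that the global Fenchel--Young inequality with equality exactly on the graph of $d\phi$ makes sense despite the affine ambiguities --- is precisely the content of Lemma~\ref{lemma:legdefpairing}: $\phi^*(p)=\sup_x[x,p]-\phi(x)$, so $\phi(x)+\phi^*(p)-[x,p]\ge 0$ is a genuine nonnegative function on $M\times M^*$ (a section of the trivial bundle), with equality iff $p=d\phi(x)$; equivalently, after the paper's normalization $c=-[\cdot,\cdot]+\phi_0+\phi_0^*$, the correct display is $f(x)+f^c(p)+c(x,p)\ge 0$ with $f=\phi-\phi_0$ rather than $\phi+\phi^*+c\ge 0$ literally (your $\int\phi\,d\mu$ and $\int\phi^*\,d\nu$ should likewise be $\int f\,d\mu$ and $\int f^c\,d\nu$). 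What your route buys is self-containedness: you never need to know that a solution of the Monge--Amp\`ere equation minimizes $F$ --- a step the paper's proof assumes without argument (Proposition~\ref{thm:weak_solutions} only gives the converse implication, and one must invoke convexity of $F$ to upgrade a critical point to a minimizer) --- whereas the paper's dual route embeds the theorem in the variational framework and extends more readily to non-smooth minimizers.
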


In the classical case of optimal transport, when $X=\R^n$ and $Y=(\R^n)^*$, the cost function is given by $-\langle \cdot,\cdot\rangle$ where $\langle \cdot,\cdot \rangle$ is the standard pairing of $\R^n$ with $(\R^n)^*$. Our setting is a generalization of this in the sense that the cost function induced by a Hessian manifold $(M,L)$ is induced by a pairing-like object $[\cdot,\cdot]$. However, $[\cdot,\cdot]$ will not be a bi-linear function on $M\times M^*$. Instead it will be a (piecewise) bi-linear section of a certain affine $\R$-bundle over $M\times M^*$. 

Moreover, we will explain in Section~\ref{sec:pairing} that if $(M,L)$ is special then $L$ determines a flat Riemannian metric on $M$. Moreover, it turns out that when $(M,L)$ is special, $M$ and $M^*$ are equivalent as affine manifolds. We will show that under this identification the induced cost function (defined on $M\times M^*$) is given by the squared distance function determined by a certain flat Riemannian metric on $M$, hence proving
\begin{restatable}{theorem}{secondopttransport}
\label{thm:optimaltransportsecond}
Let $(M,L)$ be a compact special Hessian manifold, $\mu$ and $\nu$ probability measures on $M$ and $M^*$ respectively. Then equation~\eqref{eq:numaeq} is equivalent to the optimal transport problem determined by $\mu$, $\nu$ and $d^2/2$, where $d$ is the flat Riemannian metric on $M$ induced by $L$.   
\end{restatable}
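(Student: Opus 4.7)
My plan is to deduce Theorem~\ref{thm:optimaltransportsecond} directly from Theorem~\ref{thm:optimaltransport} by identifying the abstract cost $c_{(M,L)}$ with $\tfrac12 d^2$ up to functions of $x$ alone and $y$ alone, since such additive terms leave the optimal transport problem (and in particular its optimal map) unchanged.

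First I would invoke the fact, asserted in the paragraph preceding the theorem and presumably established in Section~\ref{sec:pairing}, that when $(M,L)$ is special the dual $M^*$ is canonically isomorphic to $M$ as an affine manifold. Concretely, the special hypothesis means that the local Legendre-type transition data patch to a global affine diffeomorphism $M\to M^*$; under this identification, affine coordinates $p$ on $M^*$ correspond to affine coordinates $y$ on $M$. At the same time $L$ induces the flat Riemannian metric $d$ on $M$, whose Euclidean structure is realised in any affine chart adapted to $L$.

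Next I would unpack the cost. By construction $c_{(M,L)}$ is the function associated to the piecewise bi-linear section $[\cdot,\cdot]$, and in compatible affine coordinates it reads $-\langle x,p\rangle$ twisted by the affine $\R$-bundle over $M\times M^*$. Transporting $p$ to $y$ via the identification and applying the polarization identity in the norm induced by $d$, one obtains locally
\begin{equation*}
c_{(M,L)}(x,y) \;=\; \tfrac{1}{2}\,d(x,y)^2 \;+\; \alpha(x) \;+\; \beta(y),
\end{equation*}
with $\alpha,\beta$ locally affine. The key point is to show that $\alpha$ and $\beta$ assemble into globally defined functions $A\in C^\infty(M)$ and $B\in C^\infty(M^*)$; equivalently, that the cocycle of the affine $\R$-bundle on $M\times M^*$ is the sum of cocycles pulled back from each factor. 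This is precisely where the special hypothesis does work: it forces the bundle twist to decouple across the two factors, so the local $\alpha$'s and $\beta$'s glue.

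With that decomposition in hand, the optimal transport problems for $c_{(M,L)}$ and for $\tfrac12 d^2$ coincide up to modifying the cost by functions of a single variable, hence share the same optimal transport maps. Combining with Theorem~\ref{thm:optimaltransport}, which identifies solutions of \eqref{eq:numaeq} with the optimal transport map for the cost $c_{(M,L)}$, yields the asserted equivalence. The main obstacle is the gluing step just described: verifying that the local polarization identity assembles into a global equality on $M\times M^*$ rather than only on charts. Once this bookkeeping about the affine $\R$-bundle cocycle is carried out, the rest of the argument is formal.
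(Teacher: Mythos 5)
Your high-level strategy matches the paper's: reduce to Theorem~\ref{thm:optimaltransport} by showing that $c_{(M,L)}$ and $\tfrac12 d^2$ differ by functions of $x$ alone and $y$ alone. But the step you defer as ``the main obstacle'' --- gluing the local polarization identities --- is exactly where the content lies, and your setup for it does not work. Two concrete problems. First, the single-variable terms produced by polarization, $\alpha(x)=-\tfrac12|x|^2$ and $\beta(y)=-\tfrac12|y|^2$, are quadratic, not ``locally affine,'' and the real obstruction is not the cocycle of the bundle on $M\times M^*$ (which is a sum of pullbacks from the two factors \emph{by definition} of $L\boxplus -L^*$, so your ``decoupling'' claim is vacuous). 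Second, and more seriously, the cost is built from the pairing $[x,p]=\sup_{\gamma\in\Pi}\gamma.q(x)-q$, a supremum over the deck group. You cannot pull a non-$\Pi$-invariant term $\alpha$ out of that supremum: one needs $\sup_\gamma\bigl(-\tfrac12|\gamma(\tilde x)-\tilde y|_Q^2+\alpha(\gamma\tilde x)\bigr)=-\tfrac12 d(x,y)^2+\alpha(x)$, which fails unless $\alpha$ descends to $M$, and the quadratic leftovers from polarization do not.

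The paper's route through this is Jörgens' theorem, which your proposal never invokes. Taking $\mu,\nu$ to be the parallel measures on $M,M^*$ (here speciality enters) and $\phi$ the solution of $\MA_\nu(\phi)=\mu$ from Theorem~\ref{thm:intro-geom}, the lift $\Phi_{q_0}$ has constant Monge--Amp\`ere density, so by Jörgens it is \emph{globally} an exact quadratic form $x^TQx/2$ on $\Omega=\R^n$. For this distinguished $\phi$ the terms $\phi(x)+\phi^*(p)$ in the cost cancel the cross-term exactly inside the supremum, leaving
\begin{equation*}
-c(x,p)=\sup_{\gamma\in\Pi}\,-(\gamma(x_1)-x_2)^T\tfrac{Q}{2}(\gamma(x_1)-x_2)=-\tfrac12 d(x,p)^2
\end{equation*}
with no residual $\alpha,\beta$ at all (Proposition~\ref{prop:specialcost}); the same theorem also yields that $d\phi:M\to M^*$ is affine, which is what justifies the identification $M\cong M^*$ you assume. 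Passing to an arbitrary reference $\phi_0$ then only changes the cost by the globally defined functions $\phi-\phi_0$ and $\phi^*-\phi_0^*$ (Remark~\ref{rem:equivalentcostfunctions}), and Theorem~\ref{thm:optimaltransport} finishes the argument. Without Jörgens' theorem, or some substitute producing a global parallel Hessian metric in the class of $L$, your gluing step cannot be completed.
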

In this sense our setting can also be seen as a generalization of the works by Cordero-Erasquin \cite{cordero1999} and McCann \cite{mccann2001} on optimal transport on Riemannian manifolds.

\subsection{The Legendre Transform}

To formulate the Kantorovich type functional, we generalize the Legendre transform from $\mathbb{R}^n$ to the setting of Hessian metrics on affine manifolds. A Legendre transform of Hessian metrics on manifolds has appeared elsewhere in the literature, see, e.g., \cite{shima1997geometry,mirrorsymmetry}. In this setting, the Legendre transform is formulated in terms of the flat torsion-free connection $\nabla$ of the tangent bundle $TM$. It is shown that the connection $\nabla^* = 2\nabla_\phi - \nabla$, where $\nabla_\phi$ denotes the Levi-Civita connection defined by the Hessian metric is also a flat torsion-free connection on $TM$, defining a dual affine structure on $M$. 

We attempt to take a more global approach to constructing the Legendre transform on a Hessian manifold $(M,\phi)$. The crucial observation is that the affine structure on $M$ allows one to define local affine functions (or more generally, affine sections to the principal $\R$-bundle $L\to M$ defined by $\phi$) on $M$, which in turn can be used to define the Legendre transform by a supremum formula. A difficulty lies in that generally an affine manifold does not allow any global non-trivial affine sections. In this paper this is dealt with by passing to universal cover of the compact Hessian manifold $(M,\phi)$, which by \cite{shima1981hessian} can be realized as a convex set $\Omega\subset \R^n$ with a convex exhaustion function $\Phi$. The key advantage of this approach, compared to that of \cite{shima1997geometry,mirrorsymmetry}, is that the supremum formula allows the definition of a projection operator $P$ mapping continuous sections to convex sections. To illustrate this point, we note that the Legendre transform in \cite{shima1997geometry,mirrorsymmetry}, being defined as a change of connection on $TM$, is purely local, and in $\R$ reduces to the expression (for a smooth strictly convex $\phi:\R\to \R$)

\begin{equation}
	\phi^{*}(\phi'(x)) = \phi'(x)x - \phi(x).
	\label{eq:legendre-r}
\end{equation}
However, issues arise when attempting to take the Legendre transform of a non-convex function $f$, the one immediately relevant for our purposes being that $f^{**}$ does not define a projection operator from the space of continuous functions on $\R$ to convex functions. However, the slight modification (sometimes called the Legendre-Fenchel transform) of the above expression to

\begin{equation}
	\phi^*(p) = \sup_x px - \phi(x)
	\label{eq:legendre-fenchel-r}
\end{equation}
allows immediately the definition of the projection $f \mapsto f^{**}$. A main contribution of this paper is that we generalize \eqref{eq:legendre-fenchel-r} instead of \eqref{eq:legendre-r}, giving us such a projection operator. It is this projection operator that allows us to give a variational formulation of the Monge-Amp\'ere equation, formulated in terms of a Kantorovich functional with continuous functions as domain. 

Using the variational formulation, the existence and uniqueness of solutions is reduced to a question regarding existence and uniqueness of minimizers of functionals, and a main result in this (which implicitly can also be found in \cite{chengyau1980}) is a compactness result for Hessian metrics in a fix K\"ahler class. 

\begin{restatable}{theorem}{thmCompact}
    Let $(M,L)$ be a compact Hessian manifold. Then the space of convex sections of $L$ modulo $\R$ is compact, in the topology of uniform convergence modulo $\R$.
    \label{thm:compactness}
\end{restatable}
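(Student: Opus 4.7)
I will pass to the universal cover of $M$ and reduce the statement to an Arzel\`a--Ascoli argument for a suitable family of convex functions on an open convex subset of $\R^n$. By the theorem of Shima invoked earlier in the paper, the compact Hessian manifold $(M,\phi_0)$ has its universal cover realized as an open convex $\Omega\subseteq\R^n$, on which the deck transformation group $\Pi$ acts by affine transformations with $M = \Omega/\Pi$, and $\phi_0$ lifts to a globally defined smooth convex function $\Phi_0\colon\Omega\to\R$ (unique up to an affine summand), which is $\Pi$-quasi-invariant in the sense that $\Phi_0\circ\gamma - \Phi_0$ is affine for each $\gamma\in\Pi$. A convex section of $L$ then lifts to a convex function of the form $\Phi = \Phi_0 + u$ with $u$ a $\Pi$-invariant function on $\Omega$, and two sections differ by a real constant iff the corresponding $u$'s do. Normalizing by $\sup_M u = 0$, it suffices to show that the family
\[
\mathcal{U} := \{\, u\in C^0(M)\,:\,\sup_M u = 0,\ \Phi_0 + u\text{ is convex on }\Omega\,\}
\]
is sequentially compact in $C^0(M)$, which I plan to establish via (a) a uniform $L^\infty$ bound and (b) equicontinuity.

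Step (a) is where the content lies, and I expect it to be the main obstacle. I would argue by contradiction via rescaling: suppose $u_n\in\mathcal{U}$ with $-M_n := \inf_M u_n \to -\infty$, and set $\tilde u_n := u_n/M_n$, so that $\sup \tilde u_n = 0$ and $\inf \tilde u_n = -1$. The crucial point is that $\Phi_0/M_n + \tilde u_n$ remains convex on $\Omega$, while $\Phi_0/M_n \to 0$ uniformly on compact subsets. Hence $\{\Phi_0/M_n + \tilde u_n\}$ is uniformly bounded on every compact subset of $\Omega$, and by the standard interior Lipschitz estimate for convex functions it is equi-Lipschitz on such sets; consequently, so is $\{\tilde u_n\}$. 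Arzel\`a--Ascoli then produces a subsequence converging locally uniformly to a limit $\tilde u$, which is convex (as a uniform limit of convex functions), $\Pi$-invariant, and, by tracking points in a fixed compact fundamental domain where the extrema are attained, satisfies $\sup \tilde u = 0$ and $\inf \tilde u = -1$. In particular $\tilde u$ is non-constant.

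The contradiction would come from the following auxiliary lemma: every $\Pi$-invariant convex function $v\colon\Omega\to\R$ is constant. Indeed, $v$ descends to the compact $M$ and so attains its maximum at some point $y_0\in\Omega$; since $\Omega$ is open, $y_0$ is interior, and any line through $y_0$ extends on both sides within $\Omega$, so the restriction of $v$ to such a segment is a convex function with an interior maximum, hence constant on that segment. A standard connectedness argument extends this to all of $\Omega$. This contradicts the non-constancy of $\tilde u$ and establishes (a). Step (b) is then routine: the uniform bound on $\|u\|_\infty$ makes $\Phi_0 + u$ uniformly bounded on compacta of $\Omega$, and the interior Lipschitz estimate for convex functions again gives equi-Lipschitz bounds for $u$. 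Arzel\`a--Ascoli furnishes a locally uniformly convergent subsequence whose limit $u_\infty$ is $\Pi$-invariant with $\Phi_0 + u_\infty$ convex and $\sup u_\infty = 0$, i.e., $u_\infty\in\mathcal{U}$, completing the proof.
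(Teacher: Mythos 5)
Your proof is correct, and it shares the overall skeleton of the paper's argument (pass to the universal cover $\Omega$, normalize by $\sup_M u = 0$, establish a uniform $C^0$ bound and local equi-Lipschitz bounds for $u=\phi-\phi_0$, then apply Arzel\`a--Ascoli); the difference lies in how the $C^0$ bound is obtained. The paper proves it directly and quantitatively: taking the affine segment in a compact fundamental domain joining the points where $u$ attains its extrema, it uses the Alexandrov-sense bound $\nabla^2 u = \nabla^2\phi - \nabla^2\phi_0 \geq -\nabla^2\phi_0$ together with the vanishing of the derivative of $t\mapsto u(x_t)$ at the endpoints to get $\sup u - \inf u \leq C$ with $C$ depending only on $\phi_0$ and the diameter of the fundamental domain; the same Taylor-expansion device then yields the Lipschitz bound. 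You instead run a blow-up/contradiction argument whose punchline is your auxiliary lemma that every $\Pi$-invariant convex function on $\Omega$ is constant (via the interior maximum principle for convex functions), and you obtain equicontinuity from the standard interior Lipschitz estimate for bounded convex functions rather than a bespoke computation. Both routes are sound: the paper's is shorter and produces an explicit constant, which is convenient for the later a priori estimates; yours is softer and non-quantitative but isolates the conceptual obstruction (a nonconstant invariant convex function on the cover) and avoids any manipulation of Alexandrov second derivatives. The one step worth writing out carefully in your version is the passage of the normalizations $\sup\tilde u_n = 0$, $\inf\tilde u_n = -1$ to the limit, which, as you indicate, works because the extrema are attained in a fixed compact fundamental domain on which the convergence is uniform.
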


\subsection{Further results}
Using Theorem~\ref{thm:compactness}, we outline in Section~\ref{sec:KE} how functionals mimicking the Ding- and Mabuchi functionals in complex geometry can be shown to have minimizers, this also giving existence and uniqueness results for a K\"ahler-Einstein-like equation on Hessian manifolds. The main theorem in this regard can be formulated as follows.

\begin{restatable}{theorem}{thmKE}
    Let $(M,L,\phi_0)$ be a compact Hessian manifold, let $\nu$ be an absolutely continuous probability measure of full support on $M^*$, let $\mu$ be a probability measure on $\mu$ and let $\lambda\in \R$. Then the equation
    
    \begin{equation}
        \MA_\nu \phi = e^{-\lambda (\phi - \phi_0)} \mu
        \label{eq:KE-hess}
    \end{equation}
    has a solution. 
\label{thm:KE-hess}
\end{restatable}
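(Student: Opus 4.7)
The natural approach, as suggested by the paper's reference to Ding and Mabuchi functionals, is variational: construct a functional on convex sections of $L$ whose critical points solve \eqref{eq:KE-hess}, then invoke Theorem~\ref{thm:compactness} to produce a minimizer.

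First, one needs a primitive $\mathcal{E}_\nu$ of the $\nu$-Monge-Amp\`ere operator, i.e.\ a functional on convex sections with $\delta \mathcal{E}_\nu = \MA_\nu$. The natural candidate, defined by integrating $\MA_\nu$ along the linear path $\phi_t = \phi_0 + t(\phi - \phi_0)$,
$$ \mathcal{E}_\nu(\phi) = \int_0^1 \int_M (\phi - \phi_0)\, d\MA_\nu(\phi_t)\, dt,$$
is well-defined and has the claimed first variation by a polarization argument exploiting the symmetry of mixed $\nu$-Monge-Amp\`ere operators. Continuity of $\mathcal{E}_\nu$ under uniform convergence of convex sections will follow from the weak continuity of $\MA_\nu$ under such limits. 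A key feature is the cocycle identity $\mathcal{E}_\nu(\phi+c) = \mathcal{E}_\nu(\phi) + c$ for constants $c$, which holds because $\MA_\nu(\phi)$ is a probability measure and is invariant under additive shifts of $\phi$.

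Next, define the Ding-type functional
$$ D_\lambda(\phi) \;=\; -\mathcal{E}_\nu(\phi) \;-\; \tfrac{1}{\lambda}\log\int_M e^{-\lambda(\phi - \phi_0)}\, d\mu.$$
Combining the cocycle identity with $\log \int e^{-\lambda(\phi+c-\phi_0)} d\mu = -\lambda c + \log \int e^{-\lambda(\phi-\phi_0)} d\mu$, the functional $D_\lambda$ is invariant under the shift $\phi \mapsto \phi + c$, so it descends to the quotient space of convex sections of $L$ modulo $\R$. By Theorem~\ref{thm:compactness} this quotient is compact in the topology of uniform convergence modulo $\R$, and by the continuity of $\mathcal{E}_\nu$ and of the logarithm of a positive integral under uniform convergence, $D_\lambda$ is continuous on it; hence the infimum is attained. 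A direct computation of the first variation shows that a minimizer $\phi$ satisfies $\MA_\nu\phi = Z^{-1} e^{-\lambda(\phi - \phi_0)}\mu$ with $Z := \int e^{-\lambda(\phi-\phi_0)}d\mu$; shifting $\phi$ by the constant $\lambda^{-1}\log Z$ absorbs $Z$ into the right-hand side (the left-hand side is unchanged by additive constants) and produces an exact solution of \eqref{eq:KE-hess}.

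The main obstacle is developing the energy functional $\mathcal{E}_\nu$ in the present setting: path-independence, the variational identity $\delta \mathcal{E}_\nu = \MA_\nu$, and continuity in the uniform topology all require a robust Alexandrov-type theory of mixed $\nu$-Monge-Amp\`ere measures on the Hessian manifold $(M,L)$. Once that infrastructure is in place---essentially the same infrastructure needed for Theorem~\ref{thm:intro-geom}---the striking strength of Theorem~\ref{thm:compactness}, namely compactness of the entire space of convex sections modulo $\R$, eliminates any need for a separate coercivity estimate and handles all values of $\lambda \in \R$ uniformly, in contrast to the delicate Fano analogue in complex K\"ahler-Einstein theory.
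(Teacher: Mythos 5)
Your overall architecture --- a Ding-type functional, invariance under $\phi\mapsto\phi+c$, compactness of convex sections modulo $\R$ from Theorem~\ref{thm:compactness}, projection to a convex minimizer, and an Euler--Lagrange computation --- is exactly the paper's strategy. The difference, and the gap, is in how you produce a primitive of $\MA_\nu$. You propose the Aubin--Mabuchi-style energy $\mathcal{E}_\nu(\phi)=\int_0^1\int_M(\phi-\phi_0)\,d\MA_\nu(\phi_t)\,dt$ and defer its well-definedness, the identity $\delta\mathcal{E}_\nu=\MA_\nu$, and its continuity to ``a robust Alexandrov-type theory of mixed $\nu$-Monge-Amp\`ere measures.'' No such theory is developed in this setting, and it is not clear one exists: here $\MA_\nu(\phi)=(T_\phi)_*\nu$ is a pushforward under the inverse gradient map, not a determinant, so the polarization/symmetry argument you invoke has no obvious analogue. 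The paper sidesteps all of this: the primitive is available in closed form as $\int_{M^*}(\phi^*-\phi_0^*)\,d\nu$, whose first variation $-\int_M v\,d\MA_\nu(\phi)$ is already established in Proposition~\ref{thm:weak_solutions} via the Legendre variation formula \eqref{eq:legendre_variation}, and whose continuity in $C^0$ follows from $\sup_{M^*}|(\phi+v)^*-\phi^*|\le\sup_M|v|$. (Integrating that variation formula along your linear path shows your $\mathcal{E}_\nu$, if it were defined, would equal $-\int_{M^*}(\phi^*-\phi_0^*)\,d\nu$, so you are reconstructing the same functional by a harder and unjustified route.)

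A second, related gap: your $\mathcal{E}_\nu$ is only meaningful on convex sections, yet at the end you perturb the minimizer by an arbitrary continuous $v$ to extract the Euler--Lagrange equation. For general continuous $v$ the perturbation $\phi+tv$ leaves the cone of convex sections, so a minimizer over convex sections only yields a one-sided inequality. The Legendre-transform formulation resolves this because $\int_{M^*}((u+\phi_0)^*-\phi_0^*)\,d\nu$ is defined on all of $C^0(M)$; one minimizes over continuous sections, uses $D(\phi^{**})\le D(\phi)$ (noting $\phi^{***}=\phi^*$ and $\phi^{**}\le\phi$, with the sign of $-\lambda^{-1}\log\int e^{-\lambda(\cdot)}$ working out for either sign of $\lambda$) to see the minimizer is convex, and then the two-sided variation is legitimate. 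This is precisely the role of the projection operator the paper emphasizes in the introduction. Your treatment of the normalizing constant $Z$ by shifting $\phi$ is fine; do note separately that $\lambda=0$ (where $D_\lambda$ is undefined) reduces to Theorem~\ref{thm:intro-geom}.
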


We wish to point out that in contrast to the complex setting solutions to \eqref{eq:KE-hess} do not define Einstein metrics, in the sense that \eqref{eq:KE-hess} is not a reformulation of the Einstein equation $\Ric g = \lambda g$. However, as mentioned above the geometric properties of solutions to equation \eqref{eq:KE-hess} have very recently been studied by Klartag and Kolesnikov in \cite{klartag2016remarks}. Moreover, when $M=\R^n$, \eqref{eq:KE-hess} has been studied as a twisted K\"ahler-Einstein equation on a corresponding toric manifold (see \cite{wangzhu,berman2013real}) and when $M$ is the real torus with the standard affine structure \eqref{eq:KE-hess} has been studied as an analog of a twisted singular K\"ahler-Einstein equation in \cite{hultgren}.
In the case when $\lambda>0$ we will also show uniqueness of solutions to \eqref{eq:KE-hess}. When $\lambda<0$ solutions are not unique in general. Nevertheless, with the variational approach outlined here one gets a set of distinguished solutions, namely the minimizing ones.

Further, although this paper is chiefly concerned with the case where $M = \Omega/\Pi$ is a manifold, we in Section~\ref{sec:orbifold} outline how the results can be extended to an orbifold setting. 

\subsection{Atomic measures}

We also include a section on atomic measures, and show that the only convex sections $\phi$ where the Monge-Amp\'ere operator has finite support are the piecewiese affine ones (Theorem \ref{thm:pw-affine}).

Corresponding to a piecewise affine section of $L$ is a (locally) piecewise affine function $\Phi$ on $\Omega$. The singular locus of $\Phi$ defines a quasiperiodic tiling of $\Omega$ (with respect to $\Pi$) by convex polytopes. This means that solving Monge-Amp\'ere equations with atomic data corresponds to finding quasi-periodic tilings of the covering space. In the case of real tori $M = \mathbb{R}^n/\mathbb{Z}^n$, for $n=2,3$ this is related to the computational work in \cite{caroli2009computing,caroli2008computation,yan2011computing}.

The main points of this section are the following theorems.

\begin{restatable}{theorem}{pwaffthm}
We call a probability measure $\mu$ on $M$ atomic if $\mu = \sum_{i=1}^N \lambda_i \delta_{x_i}$. Let $\nu$ be an aboslutely continuous probabilty measure of full support on $M^*$. Then

\begin{equation}
	\MA_\nu \phi \text{ is atomic}  \Leftrightarrow \phi \, \text{is piecewise affine.}
\end{equation}

\label{thm:pw-affine}
\end{restatable}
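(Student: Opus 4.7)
The plan is to work on the universal cover and translate the statement into one about the lifted convex function $\Phi = \Phi_0 + u$ on $\Omega$ and its Legendre-Fenchel transform $\Phi^*$ (defined via the sup formula from the paper). The equivalence $\MA_\nu(\phi)(A) = \nu(d\Phi(\widetilde{A}))$ for a fundamental domain lift is the bridge: atomicity of $\MA_\nu(\phi)$ on $M$ corresponds to the Legendre-dual image $d\Phi(\Omega)$ being, up to $\nu$-null sets, a $\Pi$-orbit of finitely many subdifferential polytopes. I will use the involution $\Phi^{**} = \Phi$ together with the fact that the Legendre transform interchanges the roles of "vertex of the piecewise-affine decomposition of $\Phi$" and "maximal affine cell of $\Phi^*$".

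\textbf{Easy direction} $(\Leftarrow)$. Assume $\Phi$ is piecewise affine on $\Omega$, i.e.\ there is a locally finite, $\Pi$-periodic polyhedral decomposition $\{C_\alpha\}$ of $\Omega$ on each of whose cells $\Phi$ restricts to an affine function. For a convex piecewise affine function, the subdifferential $\partial\Phi(x)$ at a point lying in the relative interior of a cell $C_\alpha$ of codimension $k$ is a convex polytope of dimension exactly $k$. Hence the only points $x\in\Omega$ whose subdifferential has nonzero Lebesgue measure in $(\R^n)^*$ are the $0$-dimensional cells (the vertices), and since $\nu$ is absolutely continuous, $d\Phi$ sends the union of the positive-dimensional cells to a $\nu$-null set. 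Pushing down to $M$, $\MA_\nu(\phi)$ is concentrated on the (finite, by compactness of $M$) set of vertex orbits $\{x_1,\dots,x_N\}$, with weight $\lambda_i = \nu(\partial\Phi(\tilde x_i))$, which is precisely atomicity.

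\textbf{Hard direction} $(\Rightarrow)$. Assume $\MA_\nu(\phi) = \sum_i \lambda_i \delta_{x_i}$. Lift the atoms to a $\Pi$-orbit $\{\tilde x_{i,\gamma}\} \subset \Omega$. Then for any Borel $A$ disjoint from this orbit one has $\nu(d\Phi(A)) = 0$, and since $\nu$ has full support and absolutely continuous density, $d\Phi(A)$ is Lebesgue-null. Because $d\Phi(\Omega) = \Omega^*$ modulo a null set (as $\MA_\nu(\phi)$ and $\nu$ correspond to probability measures of equal mass on $M$ and $M^*$), the closed convex sets $P_{i,\gamma} := \partial\Phi(\tilde x_{i,\gamma})$ cover $\Omega^*$ up to a null set, hence in fact cover $\Omega^*$, since a locally finite union of closed convex sets whose complement has empty interior equals the whole space. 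Now observe that on the interior of each $P_{i,\gamma}$, the Legendre transform $\Phi^*$ satisfies $\tilde x_{i,\gamma} \in \partial\Phi^*(p)$ for every $p$, so applying the subgradient inequality in both directions over any line segment in $\mathrm{int}(P_{i,\gamma})$ yields that $\Phi^*$ is affine with gradient $\tilde x_{i,\gamma}$ on $P_{i,\gamma}$. Thus $\Phi^*$ is piecewise affine on $\Omega^*$, and its Legendre transform $\Phi = \Phi^{**}$ (an identity guaranteed by convexity of $\Phi$ and the projection operator $P$ developed earlier in the paper) is therefore also piecewise affine, with $\Pi$-invariant Hessian structure inherited from $\Phi$.

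\textbf{Main obstacle.} The delicate step is the coverage argument: controlling $d\Phi(\Omega)$ well enough to conclude that the polytopes $P_{i,\gamma}$ actually tile $\Omega^*$ (rather than just covering it up to measure zero), and that this tiling is locally finite. This uses both the full-support hypothesis on $\nu$ and the specific structure of $\Omega^*$ as the universal cover of the dual Hessian manifold, and is where the global Legendre machinery from the earlier sections of the paper is essential; without it the argument on $\R^n$ could fail (e.g.\ if the range of $d\Phi$ were a proper subset of $\Omega^*$).
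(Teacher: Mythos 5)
Your easy direction (piecewise affine $\Rightarrow$ atomic) is fine, and your strategy for the hard direction — pass to the dual, show the subdifferential polytopes $P_{i,\gamma}=\partial\Phi(\tilde x_{i,\gamma})$ cover $\Omega^*$ with $\Phi^*$ affine on each, then dualize back via $\Phi=\Phi^{**}$ — is a legitimate dual reformulation of what the paper does. But there is a genuine gap exactly at the point you flag as ``the main obstacle'': you never establish that the family $\{P_{i,\gamma}\}$ is \emph{locally finite} in $\Omega^*$. You invoke local finiteness twice without proof: first to conclude that the union of the $P_{i,\gamma}$ is closed (so that a complement with empty interior is empty), and second, implicitly, to pass from ``$\Phi^*$ is affine on each cell of a covering by closed convex bodies'' to ``$\Phi^*$ is piecewise affine on compact sets'' (and again when conjugating back to get that $\Phi^{**}$ is piecewise affine). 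Since the atoms lift to an infinite $\Pi$-orbit, infinitely many $P_{i,\gamma}$ could a priori accumulate on a compact subset of $\Omega^*$, and without ruling this out the conclusion does not follow. This is precisely the content the paper supplies: it fixes a compact $K\subset\Omega$, uses uniqueness of the minimizer of the Kantorovich functional to show $\Phi$ is the sup of only those supporting affine functions anchored at points of $\supp\mu$, notes that $\partial\Phi(K)$ is compact in $K^*$, and then uses the uniform exhaustion Lemma~\ref{lem:uniform-exhaust} to produce an exhaustion $f$ of $\Omega$ with $\{f\le 1\}$ compact and containing every atom whose supporting function is active on $K$ — hence only finitely many affine pieces matter on $K$. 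Some such compactness/exhaustion argument is unavoidable and is the heart of the theorem; your sketch defers it rather than proving it.

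A secondary inaccuracy: you claim that since $\nu$ is absolutely continuous with full support, $\nu(B)=0$ forces $B$ to be Lebesgue-null. This is false (the density may vanish on a set of positive Lebesgue measure while $\nu$ still charges every open set). It is harmless here, because what you actually need is that an \emph{open} subset of $\Omega^*$ with $\nu$-measure zero is empty, which follows from full support alone — but you should argue it that way, and note that this again presupposes that the complement of $\bigcup P_{i,\gamma}$ is open, i.e.\ the unresolved local finiteness. Finally, note that the paper itself only writes out the direction ``atomic $\Rightarrow$ piecewise affine''; your converse argument is a useful addition and is essentially correct (the subdifferential over the relative interior of a positive-codimension face is a single polytope of dimension strictly less than $n$, hence Lebesgue-null, hence $\nu$-null).
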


\begin{restatable}{theorem}{pwaffcor}
	Any Hessian metric $\phi_0$ on a compact Hessian manifold $(M,L,\phi_0)$ can approximated uniformly by a piecewise affine section.
	\label{thm:pw-approx}
\end{restatable}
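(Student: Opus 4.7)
The plan is to reduce the approximation problem to Theorem~\ref{thm:pw-affine} via a weak approximation of measures combined with the stability of the Monge-Amp\`ere operator. Concretely, fix an auxiliary absolutely continuous probability measure $\nu$ of full support on $M^*$, and set $\mu_0 = \MA_\nu(\phi_0)$, which is an absolutely continuous probability measure on $M$ since $\phi_0$ is a smooth Hessian metric. The idea is to find piecewise affine sections solving Monge-Amp\`ere equations whose right-hand sides approximate $\mu_0$.

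First I would approximate $\mu_0$ weakly by a sequence of atomic probability measures $\mu_n = \sum_{i=1}^{N_n} \lambda_{n,i}\delta_{x_{n,i}}$ on $M$. This is elementary: partition $M$ into small Borel sets $E_{n,i}$ of diameter tending to $0$, pick $x_{n,i}\in E_{n,i}$, and let $\lambda_{n,i} = \mu_0(E_{n,i})$. Then $\mu_n \to \mu_0$ in the weak topology of probability measures. By Theorem~\ref{thm:intro-geom} applied to the pair $(\mu_n,\nu)$, there exists a convex section $\phi_n = \phi_0 + u_n$ of $L$ with $\MA_\nu(\phi_n) = \mu_n$; by Theorem~\ref{thm:pw-affine}, each $\phi_n$ is piecewise affine.

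It remains to show $\phi_n \to \phi_0$ uniformly modulo $\R$. After normalizing $u_n$ (e.g.\ so that $\max_M u_n = 0$), Theorem~\ref{thm:compactness} provides a subsequence $\phi_{n_k}$ converging uniformly to some convex section $\phi_\infty$ of $L$. The essential analytic input is that the operator $\MA_\nu$ is closed under uniform convergence of convex sections and weak convergence of measures, in the sense that $\MA_\nu(\phi_{n_k}) \to \MA_\nu(\phi_\infty)$ weakly. Granting this closure property, the weak limit is $\mu_0$, so $\phi_\infty$ solves $\MA_\nu(\phi_\infty) = \mu_0$. Since $\nu$ is absolutely continuous with full support, the uniqueness part of Theorem~\ref{thm:intro-geom} forces $\phi_\infty = \phi_0 + c$ for some constant $c\in \R$. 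As this holds for any subsequential limit, the full sequence $\phi_n$ converges to $\phi_0$ uniformly modulo $\R$, which proves the theorem.

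The main obstacle is the stability statement that $\MA_\nu$ is continuous with respect to uniform convergence of convex sections. Classically this is Alexandrov's continuity theorem for Monge-Amp\`ere measures of convex functions on $\R^n$, and its analogue must be verified in the present setting where $\MA_\nu$ is built from the weighted push-forward by the multivalued subdifferential and a fixed measure $\nu$ on $M^*$. I expect this to follow from the variational characterization of $\MA_\nu$ via the Legendre-Fenchel transform described in the paper: uniform convergence $\phi_n \to \phi_\infty$ on $M$ lifts to uniform convergence on compact subsets of the universal cover $\Omega$, and standard convex-analytic arguments then show the subdifferential images converge in a set-theoretic sense that is strong enough to pass to the limit against the absolutely continuous weight $\nu$.
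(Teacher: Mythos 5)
Your reduction is the same as the paper's: fix an absolutely continuous reference $\nu$, set $\mu_0=\MA_\nu(\phi_0)$, approximate $\mu_0$ weak-$*$ by atomic measures $\mu_n$, solve $\MA_\nu(\phi_n)=\mu_n$, and invoke Theorem~\ref{thm:pw-affine} to get piecewise affine sections. Where you diverge is the convergence step. The paper simply cites Theorem~\ref{thm:inv-ma-cont} (continuity of the inverse Monge-Amp\`ere operator under weak-$*$ convergence of the data), whose proof runs exactly along your skeleton --- normalize, extract a uniformly convergent subsequence via Theorem~\ref{thm:compactness}, identify the limit, conclude by uniqueness --- but identifies the limit \emph{variationally}: one compares $F_i(\phi_i)\leq F_i(\phi)$ and shows $F_i(\phi_i)\to F(\bar\phi)$, so $\bar\phi$ minimizes the limiting Kantorovich functional and must equal $\phi_0$ up to a constant. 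You instead propose to identify the limit by passing to the limit in the equation itself, which requires the Alexandrov-type stability statement that $\MA_\nu(\phi_{n_k})\to\MA_\nu(\phi_\infty)$ weakly under uniform convergence of convex sections. That statement is true (it is the classical weak continuity of generalized Monge-Amp\`ere measures, transported to $M$ via the covering space), but you leave it as an acknowledged obstacle with only a heuristic, and it is the one nontrivial analytic input of your argument. The point worth internalizing is that the variational framework of the paper lets you avoid it entirely: the minimizing property survives limits of the functionals with no need to control subdifferential images, which is precisely why Theorem~\ref{thm:inv-ma-cont} was set up before this corollary. So: correct strategy, but either supply a proof of the stability lemma or, more economically, replace your limit-identification step with an appeal to Theorem~\ref{thm:inv-ma-cont}.
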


\begin{remark}
    We point out that the above two theorems seem to be a phenomenon specific to the compact Hessian setting, in the sense that the corresponding statements are false both in $\R^n$ and on compact K\"ahler manifolds. In $\R^n$, $n\geq 2$ we may take $\phi = \|x\|$, which is not piecewise affine, but where $\MA \phi = \delta_0$. 
    
    Further, if the Monge-Amp\'ere measure of a $\omega$-plurisubharmonic function $u$ on a compact K\"ahler manifold $(X,\omega)$ is discrete, (see \cite{coman2009quasiplurisubharmonic}), the current $\omega + dd^c u$ does not necessarily vanish. To see this, one can take $X$ to be complex projective space $\mathbb{P}^n$, and letting $\omega$ correspond to $dd^c \log |z|^2$ on a dense embedding $\mathbb{C}^n \subset \mathbb{P}^n$. Then $\omega$ is $\mathbb{C}^*$ invariant, and descends to the Fubini-Study form on $\mathbb{P}^{n-1}$. Hence $\omega \neq 0$ on the dense set $\mathbb{C}^n\subset \mathbb{P}^n$ away from the origin, but the Monge-Amp\'ere mass is concentrated on $0$. 
\end{remark}

Also note that Theorem~\ref{thm:pw-approx} can be seen as analogous to an approximation result in \cite{demailly1992regularization}, stating that an $\omega$-plurisubharmonic function on a compact K\"ahler manifold $(X,\omega)$ can be written as a decreasing sequence of $\omega$-plurisubharmonic functions with analytic singularities. However we obtain uniform convergence instead. To the best of our knowledge this is the first such result in the setting of Hessian manifolds.



\section{Geometric Setting}
\label{sec:geometric}
\begin{definition}[affine $\R$-bundle]
A affine $\R$-bundle over an affine manifold $M$ is an affine manifold $L$ and a map $\tau: L\rightarrow M$ such that the fibers of $\tau$ have the structure of affine manifolds isomorphic to $\R$ and such that there is a collection of local trivializations $\{(U_i,p_i)\}$ such that the transition maps $p_i\circ p_j^{-1}: U_j\cap U_i\times \R \rightarrow U_i\times \R$ are of the form 
\begin{equation}
	(x,y)\mapsto (x',y+\alpha_{ij}(x))
	\nonumber
\end{equation}
for some affine transition functions $\alpha_{ij}$ on $U_i\cap U_j$. 
\end{definition}
\begin{remark}
	It follows that an affine $\R$-bundle is a principal $\R$-bundle compatible with the affine structure on $M$.
\end{remark}
A section $s:M\rightarrow L$ of an affine $\R$-bundle is affine (or convex) if it is represented by affine (convex) functions in the trivializations. 

Note that if $g$ is a Hessian metric on $M$ induced by $\{\phi_i\}$, then \eqref{eq:hessian-metric} implies that $\phi_i - \phi_j$ is affine for any $i,j$. Putting $\alpha_{ij}=\phi_i-\phi_j$ defines an affine $\R$-bundle over $M$ in which $\{\phi_i\}$ is a convex section. We will often refer to a Hessian manifold as $(M,L,\phi)$ where $L$ is the affine $\R$-bundle associated to $\{\phi_i\}$ and $\phi$ is the convex section in $L$ defined by $\{\phi_i\}$. We will also refer to $\phi$ both as a weak Hessian metric, and as a convex section to $L$ interchangeably. We will say that $L$ is positive if it admits a smooth and strictly convex section. This is consistent with the terminology used in the complex geometric setting, as well as the tropical setting \cite{mikhalkin} Also, in analogy with the setting of K\"ahler manifolds we make the following notational definition.

\begin{definition}
    If $\phi$ and $\phi_0$ are convex sections to the same affine $\R$-bundle $L\to M$, we say that $\phi$ lies in the K\"ahler class of $\phi_0$. 
\end{definition}

Let $\pi:\Omega\rightarrow M$ be the universal covering of $M$. By pulling back $\nabla$ with the covering map we get that $\Omega$ is also an affine manifold. The pullback of $L$ defines an an affine $\R$-bundle over $\Omega$. Let us denote this bundle $K$ and let $\pi^*\phi$ be the pullback of $\phi$ to $K$. Let $\Gamma(\Omega,K)$ be the space of global affine sections in $K$. We have the following basic
\begin{proposition}
\label{pr:extension_of_sections}
	Any local affine section of an affine $\R$-bundle over a simply connected manifold $\Omega$ may be uniquely extended to a global affine section.  
\end{proposition}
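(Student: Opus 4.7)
The plan is to recognize that the sheaf of affine sections of an affine $\R$-bundle is locally constant with finite-dimensional stalks, and then appeal to the general fact that a locally constant sheaf on a simply connected base is trivial, so every local section extends uniquely to a global one.

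First I would verify the locally constant structure. On any trivializing chart $(U_i, p_i)$, an affine section of $K$ over $U_i$ is represented by an affine function $U_i \to \R$, so the space of such sections is a vector space of dimension $n+1$ where $n = \dim\Omega$. On an overlap $U_i \cap U_j$ the representing functions transform as $f_j = f_i + \alpha_{ij}$ with $\alpha_{ij}$ affine; this sends affine functions to affine functions and is an affine isomorphism between $(n+1)$-dimensional vector spaces. So the presheaf of affine sections is a locally constant sheaf $\mathcal{A}$ on $\Omega$.

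Given a local affine section $s$ on a connected open set $V \subseteq \Omega$, the extension is produced by analytic continuation along paths. Pick any $x_0 \in V$ and, for a point $x \in \Omega$, pick a path $\gamma$ from $x_0$ to $x$ and a finite chain of trivializing charts $U_0, \ldots, U_N$ covering the image of $\gamma$ with nonempty connected overlaps. Since an affine function on a connected open subset of $\R^n$ is uniquely determined by its restriction to any nonempty open subset, the germ of $s$ at $x_0$ extends uniquely to an affine section on $U_0$, then across the overlap to $U_1$, and so on, producing a germ of affine section at $x$.

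Path independence of this germ is where simple connectedness enters. Given two paths $\gamma_0, \gamma_1$ from $x_0$ to $x$, a homotopy between them can be subdivided into small rectangles each lying in a single trivializing chart; on each such rectangle the two chain continuations must agree, since both represent the same affine function on a connected open set. A standard open--closed argument then shows that the endpoint germs at $x$ agree, so the extension is well defined globally. Uniqueness of the global extension is automatic by the same rigidity: two global affine sections agreeing on $V$ have equal germs there, and the chain argument propagates agreement everywhere. The only delicate point is the monodromy step, which is the standard argument that the continuation of a section of a locally constant sheaf along a path depends only on the homotopy class with fixed endpoints.
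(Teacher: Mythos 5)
Your proof is correct and follows essentially the same route as the paper's: continuation of the local section along a chain of trivializing charts, using the rigidity of affine functions on connected open sets, with simple connectedness invoked to make the continuation path-independent via subdivision of a homotopy. The sheaf-theoretic packaging and the explicit monodromy details are just a more careful write-up of the same argument the paper sketches.
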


\begin{proof}
	Assume $s$ is defined in a neighborhood of $x\in \Omega$. To define $s(y)$ for $y\in \Omega$, let $\gamma$ be a curve in $\Omega$ from $x$ to $y$. Cover $\gamma$ with open balls $B_i$ each contained in a some local trivialization of $L$. In each ball there is a unique way of extending $s$. Moreover, replacing $\gamma$ with a perturbation of $\gamma$ allow us to use the same cover, $\{B_i\}$. This means, since $\Omega$ is simply connected, that $s(y)$ does not depend on $\gamma$. 
\end{proof}
 
Proposition~\ref{pr:extension_of_sections} says that $\Gamma(\Omega,K)$ is isomorphic (as an affine manifold) to the space of affine functions on $\R^n$, $(\R^n)^*\times \R$ (see Remark~\ref{rem:identification}). In particular $\Gamma(\Omega,K)$ is nonempty. 

\begin{remark}
If $y_1$ and $y_2$ are two points in the same fiber of an affine $\R$-bundle, then, since the structure group acts additively, their difference, $y_1-y_2$, is a well defined real number. Consequently, if $s_1$ and $s_2$ are sections of an affine $\R$-bundle over a manifold $M$, then $s_1-s_2$ defines a function on $M$.  Generalizing this observation to sections $s_1,s_2$ of the affine $\R$-bundles $L_1,L_2$, we see that the set of affine $\R$-bundles over $M$ naturally carries the structure of an $\R$ vectorspace. 
\end{remark}

Taking $q\in \Gamma(\Omega,K)$ we may consider the pullback $\pi^*\phi$ to $K$ and  
\begin{equation}
	\Phi_{q} = \pi^*\phi - q 
	\nonumber
\end{equation}
This is, since both $\pi^*\phi$ and $q$ are sections of $K$, a well-defined function on $\Omega$. Moreover, $\nabla d\Phi_q = \nabla d \tilde\phi$. This means the Hessian of $\Phi_q$  is strictly positive and defines the same metric as the one given by the pull back of the Hessian metric $\nabla d\phi$ on $M$. We conclude that any Hessian metric on an affine manifold may be expressed as the Hessian of a \emph{global} function on the covering space. Now, by a theorem by Shima \cite{shima1981hessian}, the covering space of any compact Hessian manifold may be embedded as a convex subset in $\R^n$. Convexity of the covering space implies that $\Phi_q$ is convex. Moreover, it readily follows from the proof in \cite{shima1981hessian} that, for some choice of $q_0$, $\Phi_{q_0}$ is an exhaustion function of $\Omega$.

\subsection{A dual Hessian manifold}
\label{sec:dual}
In the notation of the previous section we have 
\begin{center}
\begin{tikzcd}
	K \arrow{d} \arrow{r} & \Omega \arrow{d}{\pi} \\
	L \arrow{r}{\tau} & M
\end{tikzcd}
\end{center}
where $\Omega$ is the universal covering space of $M$ and $K$ is the pullback of $L$ under the covering map. In this section we will define a dual diagram
\begin{center}
\begin{tikzcd}
	K^* \arrow{d} \arrow{r} & \Omega^* \arrow{d} \\
	L^* \arrow{r} & M^*
\end{tikzcd}
\end{center}
with dual objects $K^*$, $\Omega^*$, $L^*$ and $M^*$ where $M^*$ will turn out to give (under suitable assumptions) another Hessian manifold which we will refer to as the \emph{dual Hessian manifold}. 
\begin{definition}
Let $K^*$ be the subset of $\Gamma(\Omega,K)$ given by all $q\in \Gamma(\Omega,K)$ such that $\Phi_q:\Omega\rightarrow  \R$ is bounded from below and proper.
\end{definition}
\begin{remark}
If $M=\R^n$ and $L$ is the trivial affine $\R$-bundle $\R^n\times \R$, then $\phi$ is a strictly convex function on $\R^n$ and $K^*$ is given by the affine functions on $\R^n$ such that their derivative is in the gradient image of $\phi$. 
\end{remark}
\begin{lemma}
The set $K^*\subset \Gamma(\Omega,K)$ is nonempty and open. Moreover, it does only depend on $(M,L)$. 
\end{lemma}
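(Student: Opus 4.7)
Nonemptiness is immediate from the discussion preceding the lemma: Shima's theorem supplies a specific $q_0 \in \Gamma(\Omega,K)$ for which $\Phi_{q_0}$ is a convex exhaustion of $\Omega$, hence in particular proper and bounded below, so $q_0 \in K^*$. For the independence of the choice of Hessian metric $\phi$, I would argue that if $\phi'$ is another convex section of $L$ in the same K\"ahler class, then $\phi - \phi'$ descends to a continuous (hence bounded) function on the compact manifold $M$; its pullback to $\Omega$ is therefore bounded, so $\pi^*\phi - q$ and $\pi^*\phi' - q$ differ by a bounded function on $\Omega$. Since the properties of being proper and bounded below are invariant under bounded perturbations, the resulting set $K^*$ is the same.

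For openness, fix $q_0 \in K^*$ and use the identification of $\Omega$ with an open convex subset of $\R^n$ from Shima's theorem. Extend $\Phi_{q_0}$ by $+\infty$ outside $\Omega$ to a proper, lower semicontinuous, convex function $\tilde\Phi : \R^n \to \R \cup \{+\infty\}$ with compact sublevel sets; lower semicontinuity at points of $\partial\Omega$ uses that $\Phi_{q_0}\to+\infty$ at the boundary, which follows from properness on $\Omega$. By Proposition~\ref{pr:extension_of_sections}, $\Gamma(\Omega,K)$ is isomorphic to the $(n+1)$-dimensional space of affine functions on $\R^n$, and for $q$ close to $q_0$ the difference $\ell := q - q_0$ is a small affine function. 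Then $\Phi_q = \tilde\Phi - \ell$ on $\Omega$, and since additive constants only shift $\Phi_q$, openness reduces to showing that for $\xi \in (\R^n)^*$ of sufficiently small norm, $\tilde\Phi - \xi$ retains compact sublevel sets.

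I would conclude via the standard recession-function argument from convex analysis. The recession function $\tilde\Phi_\infty(v) = \lim_{t \to \infty} \tilde\Phi(x+tv)/t$ is convex, lower semicontinuous, and positively homogeneous, and equals $+\infty$ outside the recession cone of $\Omega$; coercivity of $\tilde\Phi$ is equivalent to $\tilde\Phi_\infty > 0$ on the unit sphere $S^{n-1}$. By lower semicontinuity and compactness, $s := \min_{v \in S^{n-1}} \tilde\Phi_\infty(v)$ is attained and strictly positive. For any $\xi \in (\R^n)^*$ with $\|\xi\| < s$, one has $(\tilde\Phi - \xi)_\infty(v) = \tilde\Phi_\infty(v) - \langle \xi, v \rangle > 0$ on $S^{n-1}$, so $\tilde\Phi - \xi$ remains coercive. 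Translated back, $\Phi_q$ is proper and bounded below on $\Omega$ for all $q$ in a neighborhood of $q_0$, establishing openness.

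The main subtlety I anticipate lies in the bookkeeping between global affine sections of $K$ and affine functions on $\R^n$ via Proposition~\ref{pr:extension_of_sections}, and in verifying that the convex-analytic extension $\tilde\Phi$ and its recession function faithfully capture the properness of $\Phi_q$ on the open convex subset $\Omega$; the extension-by-$+\infty$ trick is standard but should be justified carefully given the Hessian-bundle setup.
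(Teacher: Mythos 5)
Your proof is correct, and the nonemptiness and independence-of-$\phi$ parts coincide with the paper's (Shima's theorem for nonemptiness; boundedness of $\pi^*\phi-\pi^*\psi$ because it descends to the compact $M$ for independence). The openness argument, however, takes a genuinely different route. The paper argues directly: it picks a minimizer $x_0$ of $\Phi_q$, asserts a linear lower bound $\Phi_q(x)>\epsilon|x-x_0|-C$ outside a neighbourhood of $x_0$, bounds $|q-q'|$ by $\epsilon|x-x_0|/2+C'$ for $q'$ near $q$, and concludes $\Phi_{q'}>\tfrac12\Phi_q-C/2-C'$, which is still proper and bounded below. You instead extend $\Phi_{q_0}$ by $+\infty$ to a proper lsc convex function on $\R^n$ and invoke the recession-function characterization of coercivity: $\tilde\Phi_\infty>0$ off the origin, the minimum $s$ of $\tilde\Phi_\infty$ on $S^{n-1}$ is attained and positive, and $(\tilde\Phi-\xi)_\infty=\tilde\Phi_\infty-\langle\xi,\cdot\rangle$ stays positive for $\|\xi\|<s$. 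The two arguments are morally the same fact — a convex function on $\Omega$ that is proper and bounded below grows at least linearly at infinity and toward $\partial\Omega$, and linear growth survives small linear perturbations — but your packaging via Rockafellar's recession calculus makes precise the step the paper leaves as an assertion (the linear lower bound, which does not follow from strict convexity alone as the paper's wording suggests, but from coercivity). The cost is the extra bookkeeping you flag: justifying lower semicontinuity of the extension at $\partial\Omega$ and checking that compact sublevel sets of $\tilde\Phi-\xi$ sit inside $\Omega$ and give back properness of $\Phi_q\colon\Omega\to\R$; both checks go through as you indicate. The benefit is that the neighbourhood of $q_0$ on which openness holds is quantified explicitly by $s$, and no choice of minimizer or auxiliary neighbourhood $U$ is needed.
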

\begin{proof}
As mentioned in the end of the previous section, by \cite{shima2007geometry}, $\Phi_q$ is an exhaustion function for a suitable choice of $q$. This means $K^*$ is nonempty. To see that $K^*$ is open, assume $q\in K^*$ and note that since $\Phi_q$ is bounded from below and proper it admits a minimizer $x_0\in \Omega$. Let $U$ be a neighbourhood if $x_0$. Since $\Phi_q$ is strictly convex $\Phi_q(x)>\epsilon|x-x_0|-C$ outside $U$. We have that for any $q'$ close to $q$, $|q-q'|<\epsilon|x-x_0|/2+C'$ for some $C'$. This means  
$$ \Phi_{q'} = \Phi_q + q-q' > \Phi_q - \frac{\epsilon}{2}|x-x_0| - C' > \frac{1}{2}\Phi_q -C/2 -C' $$
outside $U$. Since $\Phi_q/2$ is proper and bounded from below if and only if $\Phi_q$ is proper and bounded from below it follows that $\Phi_{q'}$ is proper and bounded from below, hence $q'\in K^*$. 

Finally, let $\phi$ and $\psi$ be two Hessian metrics of the same affine $\R$-bundle. Then $\Phi_q-\Psi_q = \pi^*\phi-\pi^*\psi$ is a continuous function on $\Omega$ that descends to $M$. This means it is bounded. We conclude that $\Phi_q$ is bounded from below and proper if and only if $\Psi_q$ is bounded from below and proper.   
\end{proof}

Note that, given $C\in \R$, we may consider the map on $\Gamma(\Omega,K)$ given by 
\begin{equation}
q\mapsto q+C. 
\label{eq:actionKstar}
\end{equation}
This defines a smooth, free and proper action by $\R$ on $\Gamma(\Omega,K)$.
Moreover, $\Phi_q$ is proper if and only if $\Phi_{q+C} = \Phi_q - C$ is proper, hence the action preserves $K^*$. 
\begin{definition}
We define $\Omega^*$ to be the quotient $K^*/\R$.
\end{definition}

\begin{remark}
\label{rem:identification}
    We here give a way to explicitly identify $\Omega$ and $\Omega^*$ with compatible embeddings in $\R^n$ and $(\R^n)^*$, respectively. Fixing a point $q_0\in K^*$, we may write any $q\in K^*$ as $q = q_0 + (q-q_0)$. Since $q - q_0$ is an affine function this yields an identification $\Gamma(\Omega,K) \overset{q_0}{\simeq} \Gamma(\Omega,0)$, where $0$ denotes the trivial affine $\R$-bundle over $\Omega$. Further, choosing a point $x_0 \in \Omega$ and a basis for $T_{x_0} \Omega$ yields an identification of $\Omega$ with an embedding to $i_1: \Omega \to \R^n$, and thus also an identification $\Gamma(\Omega,0) \overset{T_{x_0}\Omega}{\simeq} \Gamma(\R^n,0) \simeq (\R^n)^* \times \mathbb{R}$. This provides an embedding $i_2: \Omega^* \to (\R^n)^*$. In fact, as will be explained later, $d(\pi^* \phi)$ yields a map $\Omega\to \Omega^*$, and the identification can be summarized as saying that the following diagram commutes.
    
\begin{center}
\begin{tikzcd}
	\Omega \arrow{d}{i_1} \arrow{r}{d(\pi^* \phi)}  & \Omega^* \arrow{d}{i_2} \\
	i_1(\Omega) \arrow{r}{d\Phi_{q_0}} & i_2(\Omega^*)
\end{tikzcd}
\end{center}

\end{remark}

\begin{lemma}
The quotient map $ K^*\rightarrow \Omega^* $ gives $K^*$ the structure of an affine $\R$-bundle over $\Omega^*$. 
\end{lemma}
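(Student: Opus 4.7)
The plan is to show that $K^*\to \Omega^*$ is a trivial affine $\R$-bundle by appealing to the identification in Remark~\ref{rem:identification}.

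First I would fix choices of $q_0 \in K^*$, $x_0 \in \Omega$ and a basis of $T_{x_0}\Omega$ as in the remark. This produces an affine isomorphism $\Gamma(\Omega, K) \cong (\R^n)^* \times \R$ under which the $\R$-action \eqref{eq:actionKstar} becomes translation in the second factor. Since $K^*\subset \Gamma(\Omega,K)$ is open and $\R$-invariant, its image must be of the form $U \times \R$ for some open $U\subset (\R^n)^*$. Consequently $\Omega^* = K^*/\R$ is identified with $U$ and therefore inherits the structure of an affine manifold as an open subset of $(\R^n)^*$; the quotient map $K^*\to \Omega^*$ becomes the projection $U\times \R \to U$. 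The fibers are the orbits $\{q+C : C\in\R\}$, which are affine spaces isomorphic to $\R$, and the global chart supplied by this identification provides a trivialization with trivial transition function, fulfilling the definition of an affine $\R$-bundle.

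Next I would verify independence of the construction on the choices of $q_0$, $x_0$ and basis. Two such choices yield two identifications $\Gamma(\Omega, K) \cong (\R^n)^* \times \R$ differing by an affine automorphism of $(\R^n)^* \times \R$. Because this automorphism must carry translation in the second factor to itself (the $\R$-action is intrinsic to $\Gamma(\Omega,K)$), its second coordinate depends on $(p,c)$ as $c+\alpha(p)$ with $\alpha$ affine in $p$, while its first coordinate is an affine automorphism of $(\R^n)^*$. Hence the two identifications are related by a change of trivialization of precisely the form $(x,y)\mapsto (x', y+\alpha(x))$ appearing in the definition of an affine $\R$-bundle, so the structure on $K^*\to \Omega^*$ is independent of the choices.

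The main obstacle is the bookkeeping in the second step: one must carefully verify that the two ingredients of the identification (the basepoint $q_0$ and the linear identification $T_{x_0}\Omega \cong \R^n$) combine to produce an ambiguity exactly of the form required by the definition. This is mechanical once one observes that the intrinsic $\R$-action on $\Gamma(\Omega,K)$ forces any change of identification to be $\R$-equivariant, which in turn forces the second coordinate to be affine in the first.
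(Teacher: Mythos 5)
Your proof is correct and follows essentially the same route as the paper: both use the identification of Remark~\ref{rem:identification} to realize $K^*$ as an $\R$-invariant subset of $(\R^n)^*\times\R$ on which the action \eqref{eq:actionKstar} is vertical translation, so that the quotient map becomes projection onto the first factor and this yields a global affine trivialization. Your additional verification that the structure is independent of the choices of $q_0$, $x_0$ and the basis is a welcome extra precaution, but it is not a different argument.
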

\begin{proof}
First of all, note that the fibers of the quotient map are affine submanifolds of $K^*$ isomorphic to $\R$. Moreover, there is a global affine trivialization of $K^*$ over $\Omega^*$. To see this, recall that by Remark~\ref{rem:identification} $K^*$ is isomorphic to a subset of $(\R^n)^*\times \R$. The action on $K^*$ given by \eqref{eq:actionKstar} extends to all of $(\R^n)^*\times \R$ where it is given by $(a,b)\rightarrow (a,b+C)$. In particular, the quotient map is the same as the projection map on the first factor. We conclude that the identification of $K^*$ with the subset of $(\R^n)^*\times \R$ defines a global trivialization of $K^*$ over $\Omega^*$. 
\end{proof}


%
%

Now, let $\Pi$ be the fundamental group of $M$, acting on $\Omega$ by deck transformations. This action extends to an action on $K$. To see this, note that the total space of $K$ can be embedded in $\Omega\times L$ as the submanifold 
$$ \{ (x,y)\in \Omega\times L:\pi x= \tau y \}. $$
The action by $\Pi$ on $K$ is then given by $\gamma(x,y) = (\gamma x,y)$. If $q$ is an affine section of $K$ then its conjugate $\gamma \circ q \circ \gamma^{-1}$ is also an affine section of $K$. We get an action of $\Pi$ on $\Gamma(\Omega,K)$ defined by
$$ \gamma.q = \gamma\circ q \circ \gamma^{-1}. $$

\begin{lemma}\label{lemma:action1}
The action by $\Pi$ on $\Gamma(\Omega,K)$ commutes with the action by $\R$. Moreover, if $\phi$ is a convex section of $L$ and $q\in \Gamma(\Omega,K)$ then the action satisfies
$$ \Phi_{\gamma.q} = \Phi_q\circ \gamma^{-1}. $$
Finally, $q\in K^*$ if and only if $\gamma.q\in K^*$.
\end{lemma}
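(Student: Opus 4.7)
My plan is to verify the three claims in order, working from the defining formulas $q \mapsto q+C$ for the $\R$-action and $\gamma.q = \gamma \circ q \circ \gamma^{-1}$ for the $\Pi$-action, together with the embedding $K \hookrightarrow \Omega \times L$ in which $\gamma$ acts as $(x,y) \mapsto (\gamma x, y)$.

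For the commutation statement, I would unpack the $\R$-action fiberwise: shifting $q$ by $C \in \R$ amounts to translating the image of $q$ within each fiber via the principal $\R$-bundle structure on $K$. Because the $\Pi$-action leaves the $L$-component fixed in the embedding $K \subset \Omega \times L$, the restriction $\gamma : K_{\gamma^{-1}x} \to K_x$ is an affine $\R$-bundle isomorphism covering $\gamma$, and any such bundle map commutes with the principal $\R$-action on fibers. Composition with $\gamma^{-1}$ on the base side does not touch the fiber translation, so $\gamma \circ (q + C) \circ \gamma^{-1} = \gamma \circ q \circ \gamma^{-1} + C$.

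For the identity $\Phi_{\gamma.q} = \Phi_q \circ \gamma^{-1}$, the key auxiliary fact I would establish first is that $\pi^*\phi$ is itself $\Pi$-invariant in the sense $\gamma.(\pi^*\phi) = \pi^*\phi$. This is built into the definition of the pullback: since $\pi(\gamma^{-1}x) = \pi(x)$ and $\gamma$ acts trivially on the $L$-coordinate, the point $\gamma(\pi^*\phi(\gamma^{-1}x)) \in K_x$ lies over $\phi(\pi x)$, which is precisely the defining property of $\pi^*\phi(x)$. Combining this with the fact that $\gamma$ acts on fibers of $K$ as an affine bundle map (hence preserves fiberwise differences, which are real numbers by the principal $\R$-bundle structure), I would compute
\[ \Phi_{\gamma.q}(x) = \pi^*\phi(x) - (\gamma.q)(x) = \gamma(\pi^*\phi(\gamma^{-1}x)) - \gamma(q(\gamma^{-1}x)) = \pi^*\phi(\gamma^{-1}x) - q(\gamma^{-1}x) = \Phi_q(\gamma^{-1}x). \]

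The third claim then falls out immediately: $\gamma^{-1} : \Omega \to \Omega$ is a homeomorphism, so $\Phi_q$ is bounded below and proper if and only if $\Phi_q \circ \gamma^{-1}$ is, and by the identity just proved these are exactly the conditions characterizing $q \in K^*$ and $\gamma.q \in K^*$ respectively. I do not anticipate any substantive obstacle; the main care required is the book-keeping of fiberwise differences against the bundle operations themselves, and a careful verification that the chosen lift of $\Pi$ to $K$ really is an affine bundle map, which is what legitimizes both the commutation in the first claim and the cancellation in the second.
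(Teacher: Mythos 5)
Your proposal is correct and follows essentially the same route as the paper: both arguments rest on the observation that the lifted $\Pi$-action on $K\subset\Omega\times L$ fixes the $L$-coordinate and hence preserves fiberwise differences, which simultaneously yields the commutation with the $\R$-action and, combined with the invariance $\gamma.(\pi^*\phi)=\pi^*\phi$, the identity $\Phi_{\gamma.q}=\Phi_q\circ\gamma^{-1}$. The final claim is handled the same way in both (the paper invokes properness of invertible affine maps where you invoke that $\gamma^{-1}$ is a homeomorphism of $\Omega$, which is an equally valid justification).
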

\begin{proof}
First of all, if we have two points in the same fiber of $K$, $(x,y_1)$ and $(x,y_2)$. Then 
\begin{equation} 
\gamma (x,y_1)-\gamma (x,y_2) = (\gamma x, y_1) - (\gamma x, y_2) = y_1-y_2 = (x,y_1)-(x,y_2).
\label{eq:differencespreserved}
\end{equation}
In particular, if $q_1,q_2\in \Gamma(\Omega,K)$ then $q_1=q_2+C$ if and only if $\gamma q_1 = \gamma q_2+C$. This proves the first point of the lemma. Moreover, by \eqref{eq:differencespreserved} we have 
$$ \gamma \circ (\pi^*\phi) - \gamma\circ q = (\pi^*\phi) -q. $$
Since $\pi^*\phi$ descends to a convex section of $L$ we have $\gamma\circ (\pi^*\phi)\circ \gamma^{-1} = \pi^*\phi$. This means 
$$ \Phi_{\gamma.q} = \pi^*\phi - \gamma\circ q\circ\gamma^{-1} = (\pi^*\phi) \circ \gamma^{-1} -q\circ\gamma^{-1} = \Phi_q\circ\gamma^{-1} $$
proving the second point of the lemma. For the last point of the lemma, note that $\Phi_q$ is bounded from below if and only if $\Phi_q\circ \gamma^{-1}$ is bounded from below. Moreover, any invertible affine transformation of $\R^n$ is proper and has proper inverse. This means $\Phi_q$ is proper if and only if $\Phi_q\circ \gamma^{-1}$ is proper. 
\end{proof}
Form the first and third point of Lemma~\ref{lemma:action1} we have that $\Pi$ acts on $K^*$ and $\Omega^*$.  
\begin{definition}\label{def:dual}
	We define 
\begin{align}
	L^* &= K^*/\Pi \nonumber \\
	M^* &= \Omega^*/\Pi. \nonumber
\end{align}
\end{definition}
\begin{remark}
It is clear from the definition that the actions by $\Pi$ on $K^*$ and $\Omega^*$ are affine. However, at this point it is not clear that they are free. We will prove in the next section that $K$ and $K^*$ are diffeomorphic and that the action on $K$ and the action on $K^*$ are the same up to conjugation. This will imply that the quotients in Definition~\ref{def:dual} are affine manifolds. 
\end{remark}



In a lot of examples $\Omega$ and $\Phi_q$ are explicit. The action of $\Pi$ on $K^*$ is then explicitly described by
\begin{lemma}\label{lemma:action2}
Let $\gamma\in \Pi$ and $q \in K^*$. Then  
$$ \gamma.q = q + \Phi_q-\Phi_q\circ \gamma^{-1}. $$
\end{lemma}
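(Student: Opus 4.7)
The plan is to derive the formula as an immediate consequence of the previous Lemma~\ref{lemma:action1}, specifically the identity $\Phi_{\gamma.q}=\Phi_q\circ\gamma^{-1}$ established there. The key observation is that both $q$ and $\gamma.q$ are sections of the \emph{same} affine $\R$-bundle $K$, so their difference is a well-defined real-valued function on $\Omega$, and the claim is a statement about this function.

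First I would unpack the defining relation $\Phi_q = \pi^*\phi - q$. Since $\pi^*\phi$ is a fixed section of $K$, the identity $\Phi_{\gamma.q}=\Phi_q\circ\gamma^{-1}$ from Lemma~\ref{lemma:action1} rewrites as
\[
\pi^*\phi - \gamma.q \;=\; \pi^*\phi\circ\gamma^{-1} - q\circ\gamma^{-1}.
\]
Using once more that $\pi^*\phi$ descends to $M$, so $\pi^*\phi = \pi^*\phi\circ\gamma^{-1}$ as sections (this is exactly what was used in the proof of Lemma~\ref{lemma:action1}), I would subtract the two expressions for $\Phi_q$ and $\Phi_{\gamma.q}$ to obtain
\[
\gamma.q - q \;=\; \Phi_q - \Phi_{\gamma.q} \;=\; \Phi_q - \Phi_q\circ\gamma^{-1},
\]
which is the claimed formula after rearranging.

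There isn't really a hard step here; the statement is essentially a rewriting of Lemma~\ref{lemma:action1} in a form that is useful for explicit computations when $\Omega$ and $\Phi_q$ are given concretely. The only thing worth double-checking is the interpretation of the equality: the right-hand side $q + (\Phi_q - \Phi_q\circ\gamma^{-1})$ is a section of $K$ shifted by a real-valued function, which makes sense since $K$ is a principal $\R$-bundle, and it is again affine because $\gamma.q$ is affine by conjugation of affine maps. So the main bookkeeping obstacle, if any, is simply making sure the difference of two sections is consistently identified with a function on $\Omega$ via the additive $\R$-action on fibers, as set up in the remark preceding the definition of $\Omega^*$.
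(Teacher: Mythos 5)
Your proof is correct and follows essentially the same route as the paper: both derive the formula directly from the identity $\Phi_{\gamma.q}=\Phi_q\circ\gamma^{-1}$ of Lemma~\ref{lemma:action1} together with $\Phi_q - \Phi_{\gamma.q} = \gamma.q - q$. The extra bookkeeping you include about interpreting the difference of sections as a function on $\Omega$ is a reasonable clarification but not a departure from the paper's argument.
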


\begin{proof}
From the second point of Lemma~\ref{lemma:action1} we get
$$ \Phi_q-\Phi_q\circ \gamma^{-1} = \Phi_q-\Phi_{\gamma.q} = \gamma.q-q $$
proving the lemma.
\end{proof}

\begin{example}
Let $M=\R^n$, $L$ be the trivial affine $\R$-bundle, $\R^n\times \R$ over $M$ and $\phi$ be any smooth strictly convex function on $M$. Then $\Pi$ is trivial, $M^* = \Omega^* = d\phi(M)$ and $L^*$ is the trivial affine $\R$-bundle, $M^*\times \R$, over $M^*$.
\end{example}
\begin{example}
Let $M$ be the standard torus $\R^n/\mathbb{Z}^n$. Let $\phi$ and $L$ be the data defined by the Euclidean metric on $M$, in other words $\Phi_{q_0} = |x|^2/2$ for some $q_0\in \Gamma(\Omega,K)$. Now, any $q\in \Gamma(\Omega,K)$ is given by $q_0+A$ for some affine function $A=\langle x ,a \rangle + b$ on $\Omega$ ($a\in \R^n$, $b\in \R$). This means $\Phi_q = \Phi_{q_0} - \langle x ,a \rangle - b$ is bounded from below and proper for all $q\in \Gamma(\Omega,K)$ and we get that $K^*=\Gamma(\Omega,K)\cong \R^n\times \R$. The deck transformations acts by lattice translations. Given a deck transformation $\gamma_m: x\mapsto x + m$ for $m\in \mathbb{Z}^n$ we calculate $\gamma_m.q$ by 
\begin{eqnarray}
	\gamma_m.q & = & q + \Phi_q - \Phi_q \circ \gamma^{-1}
	\nonumber \\
	 & = & q + \frac{|x|^2}{2} - \langle x , a \rangle - b - \left(\frac{|x-m|^2}{2} - \langle x-m ,a \rangle - b\right) 
	\nonumber \\
	& = & q + \langle x,m \rangle - \frac{|m|^2}{2} - \langle m,a \rangle.
	\nonumber \\
	& = & q_0 + \langle x,a+m \rangle + b - \frac{|m|^2}{2} - \langle m,a \rangle
	\nonumber
\end{eqnarray}
In particular $\Pi$ acts on $\Omega^*=K^*/\R$ by translations and $M^*$ is isomorphic (as a smooth manifold) to the standard torus $\R^n/\mathbb{Z}^n$.
\end{example}

The manifolds in the above two examples are special, however as the example below illustrates our definitions work out also for non-special manifolds.

\begin{example}
Consider the action by $\mathbb{Z}$ on the positive real numbers generated by $y\mapsto 2y$. Let $M = \R_+/2^{\mathbb{Z}}$ be the quotient and $\phi$ and $L$ be the data defined by the metric $dy\otimes dy/y^2$ on $M$, in other words $\Phi_{q_0} = -\log(y)$ for some $q_0\in \Gamma(\Omega,K)$. We see that $-\log y - \langle y,a \rangle - b$ is bounded from below and proper if and only if $a<0$. This means $\Omega^*$ consists of all $q=q_0 + \langle y,a \rangle + b$ where $a<0$. Given a deck transformation $\gamma_m: y\mapsto 2^m y$ we calculate $\gamma. q$ by
\begin{eqnarray}
	\gamma_m.q & = & \Phi_q - \Phi_q\circ \gamma^{-1} \nonumber \\
	& = &  q -\log y -\langle a,y \rangle - b -  (-\log 2^{-m} y - \langle a,2^{-m} y \rangle - b) 
	\nonumber \\
	& = &  q_0 + \langle y,2^{-m} a \rangle + b - m\log 2. 
	\nonumber
\end{eqnarray}
In particular, if we identify an element $q=q_0 + \langle y,a \rangle + b$ in $\Omega^*$ with $a<0$ then the action by $\Pi$ on $\Omega^*$ is described by $\gamma_m:a\mapsto 2^{-m} a$ and $M^*\cong\R_-/2^\mathbb{Z}$.
\end{example}

\subsection{Legendre Transform}
\label{sec:legendre}

We begin by defining the Legendre transform of a section of $L\rightarrow M$ as a section of $-K^*\rightarrow \Omega^*$. In Proposition~\ref{prop:legequiv} we show that it is equivariant, in other words that it descends to a section of $-L^*\rightarrow M^*$. 

\begin{definition}[Legendre transform on the cover]
	Let $(M,L)$ be a Hessian manifold. Then the Legendre transform of a continuous section $\phi$ of $L$ is the convex section of the affine $\R$-bundle $-K^* \rightarrow \Omega^*$ defined by

\begin{equation}
	\phi^*(p) := -q + \sup_{x\in M} q(x) - \phi(x) = -q + \sup_{x\in M} -\Phi_q(x),
\label{eq:legendre_simply}
\end{equation}

where $q\in K^*$ is any point in the fiber over $p\in \Omega^*$.
\end{definition}

To see that the Legendre transform is well-defined, we must verify that it is independent of choice of $q$, but this follows immediately since any other choice can be written as $q' = q + m$ for some $m\in \mathbb{R}$, and thus

$$
	-q' + \sup_{x\in \Omega} q'(x) - q^*\phi(x)  = -q - m + \sup_{x\in \Omega} q(x) + m - \phi(x) = \phi^*(p).
$$

Also note that the $\sup$ in \eqref{eq:legendre_simply} is always attained, since $p\in \Omega^*$ means that $\Phi_q$ is bounded from below and proper. 

\begin{remark}
	Note that over a simply connected manifold $\Omega$, any point $q\in \Gamma(\Omega,K)$ defines a global affine trivialization of the affine $\R$-bundle $K\rightarrow \Omega$. Since $\phi = \phi - q + q$, the representation of $\phi$ as function in this trivialization is simply $\Phi_q = \phi - q$. Thus the Legendre transform over a simply connected manifold can be viewed as taking the $\sup$ in different trivializations of the affine $\R$-bundle $K\rightarrow \Omega$. 
\end{remark}
\begin{remark}
	As in Remark~\ref{rem:identification} fix $x_0\in \Omega$, a basis of $T_{x_0}\Omega$ and $q_0\in K^*$. For each $p\in \Omega^*$, let $L(p)$ be the unique element $q$ in the fiber above $p$ such that $q_0(x_0)=q(x_0)$. Then $L$ defines an affine section of $K^*$. Moreover, using the idetification of $\Omega$ with a subset of $\R^n$, $L(p)-q_0$ may be identified with an element in $(\R^n)^*$.	Letting $q=L(p)$ and plugging this into \eqref{eq:legendre_simply} gives
\begin{equation}
	\phi^*(p) = L(p) + \sup_{x\in \Omega} \Phi_{L(p)} = L(p) + \sup_{x\in \Omega} (q-q) - \Phi_{q_0} = L(p) + \Phi_{q_0}^*(L(p)-q_0) 
\end{equation}
where $\Phi_p^*$ denotes the Legendre transform of $\Phi_{q_0}$, seen as a bona fide convex function on a convex domain in $\mathbb{R}^n$. We conclude that 
$$ \phi^*-L = \Phi_{q_0}^*. $$
\label{rem:leg-bi}
\end{remark}

\begin{proposition}
\label{prop:legequiv}
	The Legendre transform $\phi^*$ is $\Pi$-equivariant, i.e. $\phi^*(\gamma.p) = \gamma.\phi^*(p)$ for all $\gamma \in \Pi$.
\end{proposition}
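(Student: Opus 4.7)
The plan is to unwind the definition of $\phi^*$, evaluate it at $\gamma.p$ using a well-chosen representative in $K^*$, and match the result against the $\Pi$-action on $-K^*$. The key inputs are Lemma~\ref{lemma:action1}, which tells us both that $\Phi_{\gamma.q}=\Phi_q\circ\gamma^{-1}$ and that the $\Pi$-action on $K^*$ commutes with the $\R$-action on fibers, so it descends to $\Omega^*$ and makes the quotient map $K^*\to\Omega^*$ $\Pi$-equivariant.

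First I would fix $p\in\Omega^*$ and pick any representative $q\in K^*$ lying above $p$. Then $\gamma.q\in K^*$ lies above $\gamma.p$, so it is a legitimate choice of representative when computing $\phi^*(\gamma.p)$. Plugging this into \eqref{eq:legendre_simply} gives
\begin{equation*}
\phi^*(\gamma.p) \;=\; -(\gamma.q) + \sup_{x\in\Omega}\bigl(-\Phi_{\gamma.q}(x)\bigr).
\end{equation*}
Using $\Phi_{\gamma.q}=\Phi_q\circ\gamma^{-1}$ from Lemma~\ref{lemma:action1} and the change of variables $y=\gamma^{-1}x$ (which is a bijection of $\Omega$ since $\gamma$ acts by deck transformations), the supremum becomes $\sup_{y\in\Omega}(-\Phi_q(y))$, which is exactly the real number appearing in $\phi^*(p)=-q+\sup_{y\in\Omega}(-\Phi_q(y))$.

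Next I would compare this with $\gamma.\phi^*(p)$. Because the $\Pi$-action on $K^*$ commutes with the $\R$-action on fibers (first point of Lemma~\ref{lemma:action1}), the same is true for the induced action on the opposite-sign bundle $-K^*$. Hence, writing $c=\sup_{y\in\Omega}(-\Phi_q(y))\in\R$,
\begin{equation*}
\gamma.\phi^*(p) \;=\; \gamma.(-q+c) \;=\; -(\gamma.q)+c \;=\; \phi^*(\gamma.p),
\end{equation*}
which is the desired equivariance.

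The argument is essentially a bookkeeping exercise, and the only subtlety is making sure the identification $\phi^*(p)\in(-K^*)_p$ is read correctly and that the chosen representative $q$ over $p$ really maps to a representative $\gamma.q$ over $\gamma.p$; both facts are consequences of Lemma~\ref{lemma:action1}. Independence of $\phi^*(p)$ from the choice of representative $q$ (already checked after the definition) is what lets us pick $\gamma.q$ freely on the $\gamma.p$ side, so no additional obstacle arises.
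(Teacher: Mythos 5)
Your proof is correct and follows essentially the same route as the paper: choose the representative $\gamma.q$ over $\gamma.p$, invoke $\Phi_{\gamma.q}=\Phi_q\circ\gamma^{-1}$ from Lemma~\ref{lemma:action1} to show the supremum is unchanged, and use the commutation of the $\Pi$- and $\R$-actions to identify the result with $\gamma.\phi^*(p)$. Your version just spells out the bookkeeping (the change of variables in the sup and the descent of the commuting actions to $-K^*$) a bit more explicitly than the paper does.
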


\begin{proof}
	Fix $\gamma \in \Pi$. By Lemma~\ref{lemma:action1}, we have $\Phi_{\gamma.q} = \Phi_q \circ \gamma^{-1}$. Thus
	
\begin{equation}
	\phi^*(\gamma.p) = -\gamma.q + \sup_{\Omega} -\Phi_q\circ \gamma^{-1} = -\gamma.q + \sup_{\Omega} -\Phi_q =
	-\gamma.(q + \sup_{\Omega} \Phi_q) = \gamma.\phi^*(p).
\end{equation}
\end{proof}

We will now define a map $d\phi:\Omega\rightarrow \Omega^*$. It turns out that if $M$ is a compact manifold, and $\phi$ is smooth, then this map is a diffeomorphism. The map will also be equivariant. This will guarantee that the action of $\Pi$ on $\Omega^*$ induces a smooth quotient manifold $\Omega^*/\Pi=M^*$. Moreover, the map will also provide a diffeomorphism between $M$ and $M^*$ proving that they are equivalent as smooth manifolds.  
\begin{definition}\label{def:differential}
Let $(M,L,\phi)$ be a Hessian manifold and $x\in \Omega$. Locally there is a unique affine section tangent to $\pi^*\phi$ at $x$. By Proposition \ref{pr:extension_of_sections} this extends to a global affine section $q\in K^*$ (thus satisfying $\Phi_q(x)=d\Phi_q(x)=0$). We define $d(\pi^* \phi)(x)$ as the image of $q$ in $\Omega^*$.
\end{definition}

\begin{lemma}
	Let $(M,L,\phi)$ be a compact Hessian manifold. Then $d (\pi^*\phi): \Omega \rightarrow \Omega^*$ is a diffeomorphism.
\end{lemma}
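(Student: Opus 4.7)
The strategy is to reduce the statement to the classical fact that the gradient of a smooth, strictly convex, proper function which is bounded below on an open convex subset of $\R^n$ is a diffeomorphism onto the interior of the set of admissible slopes. This reduction is essentially given by Remark~\ref{rem:identification}.

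First, I fix a base point $q_0\in K^*$ (so that $\Phi_{q_0}$ is a smooth, strictly convex exhaustion function bounded below, using Shima's theorem and the assumption that $\phi$ is a Hessian metric), a point $x_0\in \Omega$, and a basis of $T_{x_0}\Omega$, and carry out the identifications $i_1: \Omega\hookrightarrow \R^n$ and $i_2:\Omega^*\hookrightarrow (\R^n)^*$ of that remark. Under these identifications, if $q\in K^*$ lies over $p\in \Omega^*$, then $q-q_0$ is a well defined affine function on $\Omega\subset \R^n$ and $i_2(p)$ is its slope. The first step is then to verify that $d(\pi^*\phi)$, as defined in Definition~\ref{def:differential}, corresponds under $i_1,i_2$ to the ordinary gradient map $d\Phi_{q_0}:i_1(\Omega)\to (\R^n)^*$. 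This is immediate: if $q$ is tangent to $\pi^*\phi$ at $x$, then $\Phi_q(x)=0$ and $d\Phi_q(x)=0$, which after writing $\Phi_q=\Phi_{q_0}-(q-q_0)$ forces the slope of $q-q_0$ to be $d\Phi_{q_0}(x)$, i.e.\ $i_2(d(\pi^*\phi)(x))=d\Phi_{q_0}(i_1(x))$. This is exactly the commutative diagram in Remark~\ref{rem:identification}.

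Second, I observe that $i_2(\Omega^*)$ is precisely the set of slopes $a\in (\R^n)^*$ for which $\Phi_{q_0}(x)-\langle x,a\rangle$ is proper and bounded below on $i_1(\Omega)$; this is the definition of $K^*$ translated through the identifications, and the $\R$-action of \eqref{eq:actionKstar} is exactly the freedom in the constant term. With this identification, it suffices to show that $d\Phi_{q_0}:i_1(\Omega)\to i_2(\Omega^*)$ is a diffeomorphism. Injectivity follows from strict convexity of $\Phi_{q_0}$. The inverse function theorem applies at every point because the Jacobian of $d\Phi_{q_0}$ is the Hessian of $\Phi_{q_0}$, which equals the Hessian metric and is therefore positive definite; hence $d\Phi_{q_0}$ is a local diffeomorphism with open image.

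For surjectivity onto $i_2(\Omega^*)$, I take any $a\in i_2(\Omega^*)$. By the preceding paragraph, $\Phi_{q_0}(x)-\langle x,a\rangle$ is a smooth function on the open convex set $i_1(\Omega)$ which is proper and bounded below, hence attains its minimum at some interior point $x^\ast$, where the first order condition $d\Phi_{q_0}(x^\ast)=a$ holds. Thus $a$ lies in the image of $d\Phi_{q_0}$, finishing the proof. I do not expect any genuine obstacle here; the only point that requires a moment's care is the bookkeeping in the first step, where one must verify that the abstractly defined $d(\pi^*\phi)$ really corresponds to the classical gradient under the identifications $i_1,i_2$, and that the openness condition defining $K^*$ translates correctly to the classical properness/boundedness condition on $\Phi_{q_0}-\langle\cdot,a\rangle$.
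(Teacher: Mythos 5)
Your proof is correct and follows essentially the same route as the paper, which simply identifies $d(\pi^*\phi)$ with $d\Phi_{q_0}$ via Remark~\ref{rem:identification} and invokes smoothness and strict convexity. You supply more detail than the paper does — in particular the surjectivity onto $i_2(\Omega^*)$ via the properness/boundedness characterization of $K^*$, which the paper leaves implicit — but the underlying argument is the same.
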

\begin{proof}
	As in Remark~\ref{rem:identification}, we may identify $d(\pi^* \phi)$ with the map $d\Phi_{q_0}$. But since $\Phi_{q_0}$ is smooth and strictly convex, this yields an diffeomorphism.
\end{proof}
Moreover, we have
\begin{lemma}
\label{lemma:equivariant}
The map $d (\pi^*\phi): \Omega \rightarrow \Omega^*$ is equivariant. In other words
$$ d (q^*\phi)(\gamma(x)) = \gamma.d(q^*\phi)(x). $$
\end{lemma}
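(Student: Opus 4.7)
The plan is to unwind Definition~\ref{def:differential} and then exploit the transformation rule for $\Phi_q$ under the $\Pi$-action, which is already established in Lemma~\ref{lemma:action1}.

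First I would rephrase the definition of $d(\pi^*\phi)$ in terms of a tangency condition on $\Phi_q$. Namely, for $x\in\Omega$, the element $q\in K^*$ representing $d(\pi^*\phi)(x)$ is characterized (uniquely up to adding a real constant, i.e.\ uniquely as an element of $\Omega^*$) by the requirement that the affine section $q$ be tangent to $\pi^*\phi$ at $x$. In the language of $\Phi_q = \pi^*\phi - q$, this is the pair of conditions
\[
 \Phi_q(x) = 0, \qquad d\Phi_q(x) = 0.
\]
Uniqueness of the section $q$ modulo the $\R$-action is immediate: in a local trivialization of $K$ around $x$, the two conditions determine an affine function up to addition of a constant, and Proposition~\ref{pr:extension_of_sections} promotes this local section to a unique global one.

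Next I would verify the equivariance by direct substitution. Fix $\gamma \in \Pi$ and let $q$ be the representative of $d(\pi^*\phi)(x)$. By Lemma~\ref{lemma:action1} we have $\Phi_{\gamma.q} = \Phi_q \circ \gamma^{-1}$, so evaluating at $\gamma(x)$ gives $\Phi_{\gamma.q}(\gamma(x)) = \Phi_q(x) = 0$. Differentiating and using that $\gamma^{-1}$ is an affine (in particular smooth, invertible) transformation yields
\[
 d\Phi_{\gamma.q}(\gamma(x)) = d\Phi_q(x)\circ d\gamma^{-1}\big|_{\gamma(x)} = 0.
\]
Therefore $\gamma.q$ satisfies the tangency conditions at $\gamma(x)$, and by the uniqueness statement above it represents $d(\pi^*\phi)(\gamma(x))$.

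Finally, the third assertion of Lemma~\ref{lemma:action1} ensures $\gamma.q \in K^*$, so the class $[\gamma.q]$ makes sense in $\Omega^* = K^*/\R$, and passing to the quotient gives $d(\pi^*\phi)(\gamma(x)) = [\gamma.q] = \gamma.[q] = \gamma.d(\pi^*\phi)(x)$. There is no real obstacle here; the argument is essentially a translation of Lemma~\ref{lemma:action1} through the tangency characterization of $d(\pi^*\phi)$, and the only point that deserves verification is uniqueness of the tangent global affine section, which is handled by Proposition~\ref{pr:extension_of_sections}.
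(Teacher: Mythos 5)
Your proof is correct and follows essentially the same route as the paper: the paper's argument is precisely that $\Phi_{\gamma.q}=\Phi_q\circ\gamma^{-1}$ (Lemma~\ref{lemma:action1}) transfers the tangency of $q$ to $\pi^*\phi$ at $x$ into tangency of $\gamma.q$ at $\gamma(x)$. You have simply spelled out the tangency conditions $\Phi_q(x)=d\Phi_q(x)=0$, the chain-rule step, and the uniqueness of the tangent global affine section, all of which are implicit in the paper's one-line proof.
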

\begin{proof}
By Lemma~\ref{lemma:action1} $\Phi_{\gamma.q}=\Phi_q\circ\gamma^{-1}$. Hence, if $q$ is tangent to $\pi^* \phi$ at $x$ then $\gamma.q$ is tangent to $\pi^* \phi$ at $\gamma(x)$.
\end{proof}
\begin{theorem}\label{thm:diffeomorphism}
The quotient $M^*=\Omega/\Pi$ is an affine manifold and $d\phi:M\rightarrow M^*$ is a diffeomorphism.
\end{theorem}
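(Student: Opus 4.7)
The plan is to combine the two preceding lemmas: that $d(\pi^*\phi):\Omega\to\Omega^*$ is a diffeomorphism, and that it is $\Pi$-equivariant. Since $\Pi$ acts freely and properly discontinuously on $\Omega$ (this is the assumption that $M=\Omega/\Pi$ is a smooth compact manifold), the equivariant diffeomorphism transports these properties to the action on $\Omega^*$. So the first step is to observe that the $\Pi$-action on $\Omega^*$ is free and properly discontinuous, hence the topological quotient $M^*=\Omega^*/\Pi$ is a smooth manifold and the projection $\Omega^*\to M^*$ is a covering map.

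Next, I would upgrade this to an affine structure. The key point is that the $\Pi$-action on $\Omega^*$ is by affine transformations; this was already noted just after Definition~\ref{def:dual} and follows directly from the construction (for each $\gamma\in\Pi$, the map $q\mapsto \gamma.q = \gamma\circ q\circ\gamma^{-1}$ is affine as a composition of affine maps, and this action descends from $K^*$ to $\Omega^*=K^*/\mathbb{R}$). Since the deck group acts by affine transformations, the affine atlas on $\Omega^*$ descends to an affine atlas on $M^*$, so $M^*$ is an affine manifold.

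Finally, equivariance of $d(\pi^*\phi)$ (Lemma~\ref{lemma:equivariant}) means that it descends to a well-defined map $d\phi:M\to M^*$. Because $d(\pi^*\phi)$ is a diffeomorphism intertwining two free and properly discontinuous actions, the induced map on quotients is automatically a diffeomorphism: one can check this locally using small enough evenly-covered neighborhoods on either side, on which $d(\pi^*\phi)$ restricts to a diffeomorphism between single sheets.

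The only mild subtlety is verifying that the $\Pi$-action on $\Omega^*$ really is properly discontinuous and free. Freeness is immediate from equivariance: a fixed point of $\gamma$ on $\Omega^*$ would correspond via the equivariant diffeomorphism to a fixed point of $\gamma$ on $\Omega$, contradicting freeness of the $\Pi$-action on $\Omega$. Proper discontinuity follows in the same way by pulling back an offending sequence of translates through $d(\pi^*\phi)^{-1}$. I don't expect any serious obstacle beyond bookkeeping; all the analytic content has already been absorbed into the diffeomorphism statement and the equivariance lemma.
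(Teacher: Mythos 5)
Your proposal is correct and follows essentially the same route as the paper: the paper's proof likewise combines the equivariant diffeomorphism $d(\pi^*\phi):\Omega\to\Omega^*$ with the observation that a good quotient on one side transports to the other, and then passes to the induced map on quotients via the commutative square. You simply spell out in more detail the transfer of freeness and proper discontinuity and the descent of the affine atlas, which the paper leaves implicit.
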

\begin{proof}
By Lemma~\ref{lemma:equivariant} there is an equivariant diffeomorphism between $\Omega$ and $\Omega^*$. If the action by $\Pi$ on $\Omega$ induces a smooth quotient manifold, so does the action by $\Pi$ on $\Omega^*$. This means $M^*$ is an affine manifold. Moreover, we get the following commutative diagram
\begin{center}
\begin{tikzcd}
	\Omega \arrow{d}{\pi} \arrow{r}{d\pi^*\phi} & \Omega^* \arrow{d}{\tilde{\pi}} \\
	M \arrow{r}{d\phi} & M^*,
\end{tikzcd}
\end{center}
and since the top row is a diffeomorphism, so is the bottom. This means $M$ and $M^*$ are equivalent as smooth manifolds. 
\end{proof}

Using this diffeomorphism we also get an analogue of the real Legendre transform, in the sense that we can affinely identify the bidual $M^{**}$ with $M$.

\begin{lemma}
    Let $(M,L,\phi)$ be a compact Hessian manifold. Then the bidual $(M^{**},\phi^{**})$ and $(M,\phi)$ are isomorphic as Hessian manifolds.
\label{lem:bidual}
\end{lemma}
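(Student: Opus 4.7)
The strategy is to construct the isomorphism at the level of universal covers, where by Remark~\ref{rem:leg-bi} everything reduces to the classical Legendre transform of a smooth strictly convex proper exhaustion $\Phi_{q_0}: \Omega \to \mathbb{R}$, and then to descend by equivariance under the deck group $\Pi$.

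First, apply Theorem~\ref{thm:diffeomorphism} to the Hessian manifold $(M^*,-L^*,\phi^*)$ (the sign ensures $\phi^*$ is convex). This shows $M^{**}$ is an affine manifold and that $d\phi^*: M^* \to M^{**}$ is a diffeomorphism of affine manifolds. Composing with $d\phi: M \to M^*$ yields a candidate affine diffeomorphism $F = d\phi^* \circ d\phi: M \to M^{**}$. Equivariance of $F$ is automatic from Lemma~\ref{lemma:equivariant} applied at both stages.

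Second, identify $F$ with the canonical identification using Remark~\ref{rem:identification}. Fixing $x_0 \in \Omega$, a basis of $T_{x_0}\Omega$, and $q_0 \in K^*$ gives compatible embeddings $\Omega \hookrightarrow \mathbb{R}^n$ and $\Omega^* \hookrightarrow (\mathbb{R}^n)^*$ under which the lift of $d\phi$ becomes the classical gradient map $d\Phi_{q_0}$. Making the analogous choices on the next level, and invoking Remark~\ref{rem:leg-bi}, the lift of $d\phi^*$ becomes $d(\Phi_{q_0}^*)$ under the corresponding embedding $\Omega^{**} \hookrightarrow (\mathbb{R}^n)^{**} \cong \mathbb{R}^n$. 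The classical biduality for smooth strictly convex proper exhaustions gives $d(\Phi_{q_0}^*) \circ d\Phi_{q_0} = \operatorname{id}_\Omega$, so $F$ lifts to the identity on the cover and is therefore the identity affine isomorphism on $M$ under the canonical identification $M \cong M^{**}$.

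Third, match the Hessian structure. Applying Remark~\ref{rem:leg-bi} twice, in the trivialization coming from $q_0$ the section $\phi^{**}$ is represented by $\Phi_{q_0}^{**}$, which equals $\Phi_{q_0}$ by the classical biduality of convex functions. Since $\Phi_{q_0}$ is also the representation of $\phi$ in this same trivialization, the bundle map induced by $F$ carries $\phi^{**}$ to $\phi$, which finishes the proof. The main obstacle is the bookkeeping of trivializations and sign conventions, in particular verifying that the affine $\mathbb{R}$-bundle $L^{**}$ is canonically identified with $L$ via $F$; this is handled by checking that all the identifying maps are $\Pi$-equivariant, which follows from Proposition~\ref{prop:legequiv} and Lemma~\ref{lemma:equivariant}.
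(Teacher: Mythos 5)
Your proposal is correct and follows essentially the same route as the paper: both reduce to the universal cover via the identifications of Remark~\ref{rem:identification} and Remark~\ref{rem:leg-bi}, use the classical facts $d\Phi_{q_0}^*\circ d\Phi_{q_0}=\mathrm{id}$ and $\Phi_{q_0}^{**}=\Phi_{q_0}$ to see that the composed differential map lifts to the identity, and then descend by $\Pi$-equivariance. Your version merely makes the intermediate citations (Theorem~\ref{thm:diffeomorphism}, Lemma~\ref{lemma:equivariant}) more explicit than the paper's commutative-diagram presentation.
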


\begin{proof}
    Using the identification of Remark~\ref{rem:identification} twice, we have the following commutative diagram.
    
\begin{center}
\begin{tikzcd}
	\Omega \arrow{d}{i_1} \arrow{r}{d\pi^*\phi} & \Omega^* \arrow{d}{i_2}  \arrow{r}{d(\tilde{\pi}^* \phi^*)} &  \Omega^{**} \arrow{d}{i_3}\\
	i_1(\Omega) \arrow{r}{d\Phi_{q_0}} &           i_2(\Omega^*) \arrow{r}{d\Phi_{z_0}}   & i_3(\Omega^{**}),
\end{tikzcd}
\end{center}
where $z_0$ is some choice of affine section $z_0 \in \Gamma(\Omega^*,K^*)$. But taking $z_0$ as in Remark\ ~\ref{rem:leg-bi}, we have that $\Phi_{z_0} = (\Phi_{q_0})^*$. By standard properties of smooth strictly convex functions, we have that $d\Phi_{q_0}^* \circ d\Phi_{q_0} = Id$. But this shows that the identity map from $i_1(\Omega) \to i_3(\Omega^{**})$ is equivariant, and hence $M$ and $M^*$ are equivalent as affine manifolds. Further, since $\Phi_{q_0}^{**} = \Phi_{q_0}$ as convex functions, the equivalence indeed holds also in the Hessian category.
\end{proof}

 Note that by the above identification with the classical Legendre transform on the cover $\Omega$, we immediately inherit several properties from the corresponding properties of the Legendre transform in $\mathbb{R}^n$.
 In particular the above identification yields an identification of the bidual $M^{**} = M$. By taking the double Legendre transform $\Phi_{q_0}^{**}$ as a real function, we get a $\Pi$ invariant convex function on $\Omega$, which descends to a Hessian metric on $M$, and $\Phi_{q_0}^{**} = \Phi_{q_0}$. Furthermore, this construction is valid for any continuous section $s$, and hence we may define a projection operator taking continuous sections of $L\to M$ to convex sections of $L\to M$. By slight abuse of notation (i.e., identifying the bidual $M^{**} = M$, see Lemma \ref{lem:bidual}), we denote this projection by double Legendre transformation, and the following proposition follows.

\begin{proposition}
	Let $(M,L)$ be a compact Hessian manifold and $\phi$ a convex section of $L$. Then
\begin{align}
	\phi^{**}(x) &= \sup_{q \in \Gamma(\Omega,K), \Phi_q \geq 0} q(x) \\
	\phi^{**}(x) &= \phi(x)	
	\label{eq:sup_formula}
\end{align}

where on the right hand side we have identified $x$ with an arbitrary point over $x$ in the cover. Furthermore, for any continuous section of $L$, we have that

\begin{equation}
	s^{**} \leq s.
\end{equation}

pointwise.
\end{proposition}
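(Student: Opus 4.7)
The plan is to reduce all three statements to standard facts about the classical Legendre-Fenchel transform in $\R^n$, using the identifications developed in Remark~\ref{rem:identification} and Lemma~\ref{lem:bidual}. Fix $q_0\in K^*$; then $\Phi_{q_0}$ is a proper, convex, lower semi-continuous function on the open convex set $i_1(\Omega)\subset\R^n$ (extend by $+\infty$ outside $i_1(\Omega)$). Under the identifications just cited, the Legendre transform $\phi^*$ corresponds, after trivialization by an affine section of $K^*$ over $\Omega^*$, to the classical Legendre-Fenchel transform $\Phi_{q_0}^*$, and iterating once more $\phi^{**}$ corresponds to $\Phi_{q_0}^{**}$; this is the content of Remark~\ref{rem:leg-bi} combined with Lemma~\ref{lem:bidual}.

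For the supremum formula, I would observe that $q\mapsto q-q_0$ gives a bijection between $\Gamma(\Omega,K)$ and the space of affine functions on $\Omega$, and that under this bijection the constraint $\Phi_q\geq 0$ translates to $A:=q-q_0\leq \Phi_{q_0}$, since $\Phi_q=\Phi_{q_0}-A$. The classical representation of the biconjugate as the supremum of all affine minorants,
$$ \Phi_{q_0}^{**}(x)=\sup\{A(x):A\text{ affine on }\R^n,\,A\leq \Phi_{q_0}\}, $$
then translates, after adding $q_0(x)$ to both sides and interpreting everything in the fiber $K_x\simeq L_{\pi(x)}\simeq\R$, to the asserted sup formula for $\phi^{**}(x)$. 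A minor check is that the right hand side is $\Pi$-invariant and so descends to a well-defined section on $M$; this follows from the $\Pi$-invariance of $\pi^*\phi$ together with Lemma~\ref{lemma:action1}, which says that $q\mapsto \gamma.q$ is a bijection of the set $\{q:\Phi_q\geq 0\}$.

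The identity $\phi^{**}=\phi$ is then the Fenchel-Moreau theorem applied to the proper, convex, lower semi-continuous function $\Phi_{q_0}$, yielding $\Phi_{q_0}^{**}=\Phi_{q_0}$; pulling back through the identifications gives $\phi^{**}=\phi$. Finally, for the projection inequality $s^{**}\leq s$ with $s$ a continuous section: the derivation of the sup formula above uses only the characterization of the biconjugate as the supremum of affine minorants, which requires no convexity assumption on the underlying function, so the sup formula holds verbatim for any continuous $s$ (noting that $S_{q_0}=\pi^*s-q_0$ is bounded below since $\pi^*(s-\phi)$ is bounded on $\Omega$, so $S_{q_0}^{**}$ is well-defined). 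From $s^{**}(x)=\sup\{q(x):q\leq \pi^*s\}$ the inequality $s^{**}(x)\leq \pi^*s(x)=s$ is immediate. I do not foresee any substantial obstacle; the care needed is in bookkeeping between sections of the affine bundles and real-valued functions under trivializations, but since the $\R$-action on each fiber is by translation, taking suprema commutes cleanly with these identifications.
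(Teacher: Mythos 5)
Your proposal is correct and takes essentially the same route as the paper: the paper derives the proposition directly from the identification of $\phi^*$ with the classical Legendre--Fenchel transform $\Phi_{q_0}^*$ on the cover (Remark~\ref{rem:leg-bi} and Lemma~\ref{lem:bidual}), from which the sup-of-affine-minorants formula, Fenchel--Moreau, and the inequality $s^{**}\leq s$ are inherited from $\R^n$. Your write-up simply makes explicit the translation $\Phi_q\geq 0\Leftrightarrow q-q_0\leq\Phi_{q_0}$ and the boundedness check for continuous sections, which the paper leaves implicit.
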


Moreover, by standard properties of convex functions, for any convex (not necessarily strictly convex) section $\phi$, $d\Phi_{q_0}$ has an inverse defined almost everywhere, namely $d(\Phi_{q_0}^*)$. This means that, under the identification above $d(\phi^*)$ is an inverse of $d\phi$ defined almost everywhere on $M^*$. We will denote this map $T_\phi$. Moreover, by standard properties for convex functions, for any continuous $\Pi$-invariant function $v$ (see for example Lemma 2.7 in \cite{berman2013real}) 
$$ \frac{d}{dt}(\Phi_{q_0}+tv)^* = -v\circ (d\Phi_{q_0}^*). $$
It follows that
\begin{equation} \frac{d}{dt}(\phi+tv)^*(p) = -v(T_\phi(p)). \label{eq:legendre_variation} \end{equation}

We end this section with the following definition.
\begin{definition}
\label{def:numa}
Let $(M,L)$ be a Hessian manifold and $\nu$ a probability measure on $M^*$. We define the $\nu$-Monge-Amp\`ere measure of a convex section $\phi$ in $L$ as 
$$ \MA_\nu(\phi) = (T_\phi)_* \nu. $$
\end{definition}
\begin{remark}
It is interesting to note that there is no apparent complex geometric analogue of the $\nu$-Monge-Amp\`ere unless in the case when $M$ is special and $\nu$ is the unique parallel probability measure, in which case $\MA_\nu$ reduces to the standard Monge-Amp\`ere operator considered in \cite{chengyau1980}.
\end{remark}

\section{Solvability of Monge-Amp\`ere equations}
\label{sec:ma}
We are now ready to give proofs the Theorems~\ref{thm:main} and \ref{thm:intro-geom}. We begin by 
\begin{definition}
	For a Hessian manifold $(M,L,\phi_0)$, the \emph{affine Kantorovich functional} is the functional $F: C^0(M) \rightarrow \mathbb{R}$ defined by

\begin{equation}
	F(u) = \int_{M} u d\mu + \int_{M^*} \left[(u+\phi_0)^* - \phi_0^* \right]d\nu
\end{equation}

By abuse of notation, if $\phi$ is a convex section of $L$, we also write
\begin{equation}
	F(\phi) = \int_{M} (\phi - \phi_0) d\mu + \int_{M^*} (\phi^* - \phi_0 ^*) d\nu 
	\label{eq:kantorovich}
\end{equation}
\end{definition}
\begin{remark}
Note that \eqref{eq:kantorovich} only depends on $\phi_0$ up to a constant. In particular the minimizers of \eqref{eq:kantorovich} are independent of $\phi_0$. We stress that this is not the classical Kantorovich functional induced by the Riemannian metric $\nabla d\phi_0$. Rather, it is determined by the affine structure on $M$ together with $L$.  
\end{remark}

\begin{proposition}
	Let $(M,L,\phi_0)$ be a compact Hessian manifold. Let $\mu$ and $\nu$ be probability measures on $M$ and $M^*$ respectively. Then $F$ admits a convex minimizer. If $\nu$ is absolutely continuous with full supprt and if $\phi_0$ and $\phi_1$ are minimizers of $F$, then $\phi_1-\phi_0$ is constant. If, in addition, $\mu$ and $\nu$ are absolutely continuous with non-degenerate $C^{k,\alpha}$ densities for some $k\in \mathbb{N}$ and $\alpha\in (0,1)$, then any minimizer is in $\phi \in C^{k+2,\alpha}$.
\label{thm:monge-ampere}
\end{proposition}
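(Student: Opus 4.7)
The plan is to use the direct method of the calculus of variations. First, I would check that $F$ is translation invariant, $F(u+c)=F(u)$, since $\mu$ and $\nu$ are probability measures and $(u+c+\phi_0)^*=(u+\phi_0)^*-c$, and that $F$ is continuous on $C^0(M)$ in the uniform topology (the Legendre transform is $1$-Lipschitz in sup norm). The key reduction is the monotonicity
\[
F\bigl((\phi_0+u)^{**}-\phi_0\bigr)\le F(u),
\]
which holds because $(\phi_0+u)^{**}\le\phi_0+u$ pointwise (so the first integral of $F$ decreases) while $((\phi_0+u)^{**})^*=(\phi_0+u)^*$ (so the second integral is unchanged). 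Thus one may minimize $F$ over the class of convex sections of $L$, and Theorem~\ref{thm:compactness} supplies sequential compactness of this class modulo $\R$ in uniform convergence. Combined with translation invariance this produces a convex minimizer $\phi$ by the direct method.

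Computing the first variation along $v\in C^0(M)$ via \eqref{eq:legendre_variation} gives
\[
\frac{d}{dt}\bigg|_{t=0}F(\phi+tv)=\int_M v\,d\mu-\int_{M^*}v\circ T_\phi\,d\nu=\int_M v\,d\mu-\int_M v\,d\MA_\nu(\phi),
\]
so any minimizer satisfies the geometric Monge-Amp\`ere equation $\MA_\nu(\phi)=\mu$. For uniqueness, $F$ is convex as a functional of $\phi$: the first term is linear, and for each fixed $p\in M^*$, $\phi\mapsto\phi^*(p)$ is a supremum of affine functionals and hence convex. If $\phi_0$ and $\phi_1$ are two minimizers then convexity forces $\tfrac12\phi_0^*(p)+\tfrac12\phi_1^*(p)=\bigl(\tfrac12\phi_0+\tfrac12\phi_1\bigr)^*(p)$ for $\nu$-a.e.\ $p$. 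Analyzing the equality case, the two suprema defining $\phi_0^*(p)$ and $\phi_1^*(p)$ must be attained at a common point, so $T_{\phi_0}(p)=T_{\phi_1}(p)$. Since $\nu$ is absolutely continuous with full support, this identity holds for almost every $p\in M^*$, yielding $d\phi_0=d\phi_1$ almost everywhere on $M$ and hence $\phi_1-\phi_0$ is constant on the connected manifold $M$.

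For the regularity statement, I would lift the minimizer to the universal cover. The function $\Phi:=\pi^*\phi-q_0$ is then convex on the convex domain $\Omega\subseteq\R^n$, and the equation $\MA_\nu(\phi)=\mu$ translates into the real Monge-Amp\`ere equation
\[
\det D^2\Phi(x)=\frac{f(x)}{g(d\Phi(x))}
\]
in the Alexandrov sense, where $f,g$ are the $\Pi$-periodic lifted densities of $\mu$ and $\nu$. Since the densities are $C^{k,\alpha}$ and non-degenerate, the right-hand side is locally bounded above and below away from zero, so Caffarelli's regularity theory applies: one first obtains strict convexity of $\Phi$ and interior $C^{1,\alpha}$ regularity, then the Caffarelli/Evans--Krylov $C^{2,\alpha}$ estimate, and finally $C^{k+2,\alpha}$ by Schauder bootstrap on the linearized equation. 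I anticipate the most delicate step of the whole argument to be establishing strict convexity of $\Phi$ on $\Omega$, since this is what allows Caffarelli's theory to be applied; here the compactness of $M=\Omega/\Pi$, together with the $\Pi$-invariance of $\nabla d\Phi$, is essential to rule out global affine pieces of $\Phi$.
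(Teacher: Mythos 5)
Your proposal is correct and follows essentially the same route as the paper: the direct method using the projection $\phi\mapsto\phi^{**}$ (which leaves $\phi^*$ unchanged and decreases the first term) together with the compactness theorem for existence, convexity of $\phi\mapsto\phi^*(p)$ with an analysis of the equality case for uniqueness, and Caffarelli's regularity theory applied on the universal cover for smoothness. The only cosmetic differences are that the paper phrases the uniqueness step as a strict-inequality contradiction for the interpolated section $\psi_t$ rather than via a common maximizer, and invokes Caffarelli in its Brenier-map formulation on small balls rather than directly for $\det D^2\Phi = f/g(d\Phi)$.
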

Before we prove this we will explain how it implies Theorems~\ref{thm:main} and \ref{thm:intro-geom}. The main point is given by the following characterization of the minimizers of $F$. 
\begin{proposition}
\label{thm:weak_solutions}
	Let $(M,L,\phi_0)$ be a compact Hessian manifold, and let $\mu$ and $\nu$ be probability measures on $M$ and $M^*$. Assume $\nu$ is absolutely continuous and let $\phi$ be a convex minimizer of \eqref{eq:kantorovich}. Then 
$$ (T_\phi)_* \nu = \mu $$
where $T_\phi$ is the map defined at the the end of the previous section.  
\end{proposition}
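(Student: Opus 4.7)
The plan is to derive the identity $(T_\phi)_* \nu = \mu$ from the vanishing of the first variation of $F$ at the minimizer $\phi$. First I will verify that $F$ is convex on $C^0(M)$, so that the minimizer is characterized by a first-order condition; then I will compute the directional derivative along an arbitrary continuous variation and recognize the resulting identity of integrals as the pushforward relation, via Riesz representation.

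Convexity of $F$ follows immediately from the structure of its two summands: $\int_M(\phi-\phi_0)\,d\mu$ is affine in $\phi$, while $\phi\mapsto \phi^*$ is convex as a pointwise supremum (see (\ref{eq:legendre_simply})) of expressions affine in $\phi$, and integration against the positive measure $\nu$ preserves convexity. Hence any minimizer $\phi$ is a global minimizer, and for every $v\in C^0(M)$ the convex function $t\mapsto F(\phi+tv)$ is minimized at $t=0$, so its derivative there vanishes. The linear term contributes $\int_M v\,d\mu$. For the Legendre term, formula (\ref{eq:legendre_variation}) provides the pointwise identity $\tfrac{d}{dt}\big|_{t=0}(\phi+tv)^*(p)=-v(T_\phi(p))$ at every $p\in M^*$ where $T_\phi$ is defined, which is $\nu$-almost every $p$ since $\nu$ is absolutely continuous. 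To exchange limit and integral I use the Lipschitz estimate
\begin{equation*}
\Bigl|\frac{(\phi+tv)^*(p)-\phi^*(p)}{t}\Bigr|\leq \|v\|_\infty,
\end{equation*}
which is immediate from (\ref{eq:legendre_simply}): adding $tv$ to the expression inside the supremum shifts it pointwise by at most $|t|\|v\|_\infty$. Dominated convergence then yields
\begin{equation*}
\frac{d}{dt}\Big|_{t=0}\int_{M^*}(\phi+tv)^*\, d\nu = -\int_{M^*} v\circ T_\phi\, d\nu = -\int_M v\, d\bigl((T_\phi)_*\nu\bigr).
\end{equation*}

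Combining the two contributions, the vanishing of the first variation at every $v\in C^0(M)$ gives $\int_M v\,d\mu = \int_M v\,d((T_\phi)_*\nu)$; since $M$ is compact, the Riesz--Markov representation theorem then identifies $\mu=(T_\phi)_*\nu$. The main delicacy is the interchange of the $t$-derivative with integration over $M^*$, which is handled cleanly by the uniform $\|v\|_\infty$ bound on the difference quotient. This bound is automatic from the supremum formulation of the Legendre transform and requires no additional regularity of $\phi$, which is important since the minimizer $\phi$ is only a priori convex and continuous.
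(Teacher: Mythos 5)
Your argument is correct and is essentially the proof given in the paper: both differentiate $t\mapsto F(\phi+tv)$ at a minimizer, justify the interchange of derivative and integral via the sup-norm Lipschitz bound $\sup_{p}|(\phi+tv)^*(p)-\phi^*(p)|\leq |t|\,\|v\|_\infty$ together with dominated convergence, apply \eqref{eq:legendre_variation} $\nu$-a.e.\ (using absolute continuity of $\nu$), and read off $\mu=(T_\phi)_*\nu$ from the vanishing of $\int_M v\,d(\mu-(T_\phi)_*\nu)$ for all $v\in C^0(M)$. Your added observations (convexity of $F$ and the explicit appeal to Riesz--Markov) are harmless refinements of the same route.
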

\begin{proof}
	Let $v$ be a continuous function on $M$. First of all, we claim that
\begin{equation}
\sup_{p\in M^*} |(\phi+tv)^*(p) - \phi^*(p)| \leq \sup_{x\in M} |tv(x)|.
\label{eq:legendre_supnorm}
\end{equation}
We defer the proof of this claim to the end of the proof of the existence part Proposition~\ref{thm:weak_solutions}. The dominated convergence theorem and \eqref{eq:legendre_supnorm} then gives

\begin{equation} 
	\frac{d}{dt}F(\phi+tv) = \int_M v \mu + \int_{M*} \frac{d}{dt} (\phi+tv)^* d\nu.
	\label{eq:gateaux_derivative}
\end{equation}
By \eqref{eq:legendre_variation} and since $\nu$ is absolutely continuous we have that $\nu$-almost everywhere 
$$\frac{d}{dt} (\phi+tv)^*(p)=-v(T_\phi(p)).$$
Applying this to the second integral above and performing the change of variables formula $x=T_\phi(p)$ we get 
$$ \frac{d}{dt}F(\phi+tv) = \int_M v (\mu-(T_\phi) _* \nu). $$
Since $\phi$ is a minimizer of $F$ this has to vanish for any $v$, hence $\mu-(T_\phi) _* \nu=0$.
\end{proof}

Combining Propositions~\ref{thm:monge-ampere} and \ref{thm:weak_solutions}, Theorems~\ref{thm:main} and \ref{thm:intro-geom} follow by the following arguments. Note that since $\pi$, $\tilde{\pi}$ are covering maps, we may consider the pullbacks $\pi^* \mu$, $\tilde{\pi}^* \nu$ as invariant measures on $\Omega, \Omega^*$, and moreover any invariant measures on $\Omega,\Omega^*$ arise in this way. Moreover, by definition, $T_\phi:M^* \rightarrow M$ is induced by the (equivariant) partially defined inverse of $d\Phi_{q_0}\Omega \rightarrow \Omega^*$. It then follows that 
\begin{equation}
\label{eq:cover_push}
\pi^*\mu = (d\Phi_{q_0})^{-1}_* \pi^* \nu 
\end{equation}
if and only if 
$$ \mu = (T_\phi)_* \nu. $$
Under the assumption that $\nu$ is absolutely continuous, \eqref{eq:cover_push} is equivalent to $\Phi_{q_0}$ being an Alexandrov solution to \eqref{eq:m-a_equation} (see for example Lemma 4.2 in \cite{villani2003topics}). 
This means Theorem~\ref{thm:main} is a direct consequence of Theorem~\ref{thm:monge-ampere} and Theorem~\ref{thm:weak_solutions}.

We now turn to the proof of Theorem~\ref{thm:monge-ampere}. To establish existence of minimizers we will need a $C^0$-estimate and a Lipschitz bound on (normalized) convex sections of $L$, which together imply the following theorem, using Arzela-Ascoli.

\thmCompact*

\begin{proposition}[Uniform $C^0$ estimate]
	Let $(M,L,\phi_0)$ be a compact Hessian manifold. Then any $\phi$ in the K\"{a}hler class of $\phi_0$, normalized such that $\sup \phi - \phi_0 = 0$ satisfies $|\phi - \phi_0| \leq C$ for some constant depending only on $\phi_0$.
\label{prop:c0-estimate}
\end{proposition}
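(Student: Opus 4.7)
The plan is to lift to the universal cover and exploit convexity globally. By Shima's embedding theorem invoked earlier, $\Omega$ sits as a convex open subset of $\R^n$ and the lifts $\Phi_0, \Phi$ of $\phi_0, \phi$ are smooth convex functions on $\Omega$, with $\Phi_0$ a proper exhaustion (possibly after modifying by an affine section $q \in K^*$, as in Section 2). Their difference $u := \Phi - \Phi_0$ is $\Pi$-invariant, descending to a continuous function on the compact quotient $M$; in particular, after fixing a compact $K \subset \Omega$ with $\pi(K) = M$, both $\sup u$ and $\inf u$ are attained at points of $K$. The normalization is $\sup u = 0$, so the content is to bound $A := -\inf u \geq 0$ by a constant depending only on $\phi_0$. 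Pick $x_+, x_- \in K$ with $u(x_+) = 0$ and $u(x_-) = -A$.

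The key step is the identity
$$\nabla \Phi(x_+) = \nabla \Phi_0(x_+),$$
where in particular $\Phi$ is differentiable at $x_+$. To establish this, I take any subgradient $p \in \partial \Phi(x_+)$ of the convex function $\Phi$ and combine the subgradient inequality $\Phi(y) \geq \Phi(x_+) + p \cdot (y - x_+)$ with the inequality $\Phi(y) - \Phi(x_+) \leq \Phi_0(y) - \Phi_0(x_+)$ expressing that $u$ is maximized at $x_+$. This gives $p \cdot (y - x_+) \leq \Phi_0(y) - \Phi_0(x_+)$ for all $y$ near $x_+$; Taylor expansion of the smooth $\Phi_0$ at $x_+$ then forces $(p - \nabla \Phi_0(x_+)) \cdot v \leq 0$ for every direction $v$, and hence $p = \nabla \Phi_0(x_+)$.

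With this identity in hand, the subgradient inequality for the convex function $\Phi$ at $x_+$ applied to $x_-$ reads
$$\Phi(x_-) \geq \Phi(x_+) + \nabla \Phi_0(x_+) \cdot (x_- - x_+),$$
which, rewriting both sides via $\Phi = \Phi_0 + u$, rearranges to
$$A \leq \Phi_0(x_-) - \Phi_0(x_+) - \nabla \Phi_0(x_+) \cdot (x_- - x_+).$$
Taylor's theorem with integral remainder bounds the right-hand side by $\tfrac{1}{2} \sup_{K'} \|\nabla^2 \Phi_0\| \cdot |x_+ - x_-|^2$, where $K'$ is the (compact) convex hull of $K$ inside the convex open set $\Omega$. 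Since $K$ and hence $K'$ are fixed, $|x_+ - x_-| \leq \mathrm{diam}(K)$, and $\Phi_0$ is smooth on the compact set $K'$, this is a constant depending only on $\phi_0$, giving $A \leq C(\phi_0)$ and the proposition.

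The only delicate point is the gradient identity at $x_+$: the function $u$ is a difference of two convex functions rather than convex itself, so standard convex-analytic calculus does not apply directly at its maximum. The key observation is that the maximum of one convex function minus another smooth convex function pins down every subgradient of the former to equal the differential of the latter, turning the information about the difference into information about both summands. Everything else is a quantitative version of convexity on a fixed compact subset of $\Omega$.
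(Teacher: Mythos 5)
Your proof is correct and follows essentially the same strategy as the paper's: both arguments bound $\sup u-\inf u$ by the second-order Taylor remainder of $\Phi_0$ along a segment joining the max and min points of $u$ inside a fixed compact fundamental domain, using convexity of $\Phi$ together with the first-order condition at the maximum of $u$. The only difference is packaging — the paper integrates the Alexandrov second derivative of $u$ twice along the segment (asserting $f'(0)=f'(1)=0$ somewhat loosely), whereas you run the subgradient inequality for $\Phi$, which more cleanly covers the case where $\phi$ is merely a continuous convex section; your opening remark that $\Phi$ is smooth is an unnecessary (and unused) overstatement.
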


\begin{proof}
	Fix $\phi$, and let $u = \phi-\phi_0 \in C^0(M)$. Being the difference of two convex sections, $u$ has a Hessian in the Alexandrov sense. Fix $K \subset \mathbb{R}^n$ as a relatively compact convex set containing a fundamental domain of $M$ in $\Omega$. Then the affine curve $x_t = (1-t) x_0 + t x_1 \in K$, where we identify $x_0,x_1$ with any lift to $K$. Letting $f(t) = u(x_t)$ we have $f'(0) = f'(1) = 0$, and thus, letting $\nabla^2$ denote the Alexandrov Hessian given by the embedding $\Omega \subseteq \mathbb{R}^n$ endowed with the Euclidean metric we have
\begin{equation}
\begin{split}
	f(t) &= f(0) + \int_0^t \left( \int_0^s f''(l) dl \right) ds \\
&= f(0) + \int_0^t \left( \int_0^s  \langle x'_l , \nabla^2 u|_{x_l} x'_l\rangle dl \right) ds \\
&\geq f(0) - \int_0^t \left( \int_0^s  \langle x'_l , \nabla^2 \phi_0|_{x_l} x'_l\rangle dl \right) ds \\
&\geq f(0) - Ct^2,
\end{split}
\end{equation}
where the first inequality follows since $\nabla^2 u = \nabla^2 \phi - \nabla^2 \phi_0$ and $\nabla^2 \phi \geq 0$ by convexity. The constant $C$ depends only on $\phi_0$ and the (bounded) diameter of $K$. For $t=1$ this yields that $\sup u - \inf u \leq C$, and the proposition follows. 
\end{proof}
Virtually the same proof can be used to give a locally uniform Lipschitz bound.

\begin{proposition}
	Let $(M,L,\phi_0)$ be a compact Hessian manifold, and let $\phi$ be a convex section of $L$. Then for $u := \phi - \phi_0$ we have $\|u\|_{Lip} \leq C$ on any compact $K\subset \Omega$, for some constant depending only on $(M,L,\phi_0)$ and $K$, where $\|.\|_{Lip}$ denotes the Lipschitz constant with respect to the Euclidean metric on $\R^n$. 
\label{prop:lip-estimate}
\end{proposition}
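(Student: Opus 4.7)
The plan is to reduce the Lipschitz bound to the $C^0$ bound already established in Proposition~\ref{prop:c0-estimate} via the classical fact that a convex function on a convex open subset of $\mathbb{R}^n$ is locally Lipschitz, with a Lipschitz constant on a compact set controlled by its oscillation on a slightly larger compact set. Since the Lipschitz seminorm is invariant under adding constants to $u$, I would first normalize $\phi$ so that $\sup_M(\phi-\phi_0)=0$; Proposition~\ref{prop:c0-estimate} then yields $|u|\leq C_0$ globally on $M$, with $C_0=C_0(M,L,\phi_0)$. Because $u=\phi-\phi_0$ is a genuine $\Pi$-invariant function on the cover $\Omega$ (both $\phi$ and $\phi_0$ are sections of the same affine $\mathbb{R}$-bundle), this uniform estimate lifts unchanged to $\Omega$.

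Next I would pass to the cover. Fix a choice of $q_0\in K^*$ so that, in the global affine trivialization determined by $q_0$, one has $\pi^*\phi=q_0+\Phi$ and $\pi^*\phi_0=q_0+\Phi_0$, where $\Phi$ is a bona fide convex function on the convex set $\Omega\subset\mathbb{R}^n$ (using Shima's embedding) and $\Phi_0$ is the smooth, strictly convex exhaustion function associated to $\phi_0$. The lift of $u$ is then $\Phi-\Phi_0$, and it suffices to bound $\|\Phi-\Phi_0\|_{Lip,K}$ for an arbitrary compact $K\subset\Omega$.

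Choose a compact $K'\subset\Omega$ with $K\subset\mathrm{int}(K')$ and $r:=d(K,\partial K')>0$. The crux is the standard convex-analysis estimate
\[
\|\Phi\|_{Lip,K}\;\leq\;\frac{\mathrm{osc}(\Phi,K')}{r},
\]
which follows by observing that any subgradient of $\Phi$ at a point of $K$ is dominated by affine functions whose values on $K'$ are pinned by the oscillation. To control the right-hand side independently of $\phi$, I would write
\[
\mathrm{osc}(\Phi,K')\;\leq\;\mathrm{osc}(\Phi_0,K')+2\sup_\Omega|u|\;\leq\;\mathrm{osc}(\Phi_0,K')+2C_0,
\]
which depends only on $(M,L,\phi_0)$ and $K$. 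Since $\Phi_0$ is smooth, $\|\Phi_0\|_{Lip,K}$ is also bounded purely in terms of $(M,L,\phi_0)$ and $K$, and the triangle inequality $\|u\|_{Lip,K}\leq\|\Phi\|_{Lip,K}+\|\Phi_0\|_{Lip,K}$ finishes the argument.

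I do not expect a serious obstacle. The only minor subtleties are the normalization step (which is harmless because the Lipschitz seminorm is translation invariant) and the care needed to invoke the convex-function Lipschitz estimate in a global affine trivialization on $\Omega$ rather than locally on $M$, where $\Phi$ itself is not globally defined. The virtue of working on the cover, already exploited in the proof of Proposition~\ref{prop:c0-estimate}, is exactly that $\Phi$ becomes a classical convex function on a convex Euclidean domain.
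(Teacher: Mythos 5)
Your proof is correct, but it takes a genuinely different route from the paper's. The paper mimics its own proof of Proposition~\ref{prop:c0-estimate}: it restricts $u$ to short affine segments $t\mapsto x+tv$, uses the second-order Taylor expansion with the Alexandrov Hessian and the one-sided bound $\nabla^2 u=\nabla^2\phi-\nabla^2\phi_0\geq -\nabla^2\phi_0\geq -C_{\phi_0}$ to deduce $f(1)\geq f(0)+du_x(v)-C_{\phi_0}/2$, and then converts the resulting inequality $du_x(v)\leq \sup u-\inf u+C_{\phi_0}$ into a Lipschitz bound via the $C^0$ estimate. You instead run a first-order argument entirely at the level of the convex function $\Phi=\Phi_{q_0}$ on the cover: the textbook subgradient estimate $\|\Phi\|_{Lip,K}\leq \mathrm{osc}(\Phi,K')/r$, the oscillation bound $\mathrm{osc}(\Phi,K')\leq \mathrm{osc}(\Phi_0,K')+2\sup|u|$ from the same $C^0$ estimate, and the triangle inequality against the smooth $\Phi_0$. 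Both arguments rest on exactly the same two inputs (convexity of $\phi$ and Proposition~\ref{prop:c0-estimate}); yours has the advantage of avoiding Alexandrov second derivatives and the Taylor formula for semiconvex functions altogether, at the cost of introducing an enlarged compact $K'$ and a choice of trivialization $q_0$ (harmless, since only the intermediate quantities $\|\Phi\|_{Lip}$, $\|\Phi_0\|_{Lip}$ depend on it, not $\|u\|_{Lip}$). One small point of care: to pass from the subgradient bound to the Lipschitz bound on a possibly non-convex $K$ you should take $K'$ to contain the convex hull of an $r$-neighbourhood of $K$, which is possible since $\Omega$ is convex; with that adjustment the argument is complete.
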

\begin{proof}
	Fix a compact set $K \subset \Omega$. We may without loss of generality assume that $K$ is a convex set containing a fundamental domain. For any $x\in K$ there is a Euclidean open ball $B(x,r)$ of radius $r$ such that $B(x,r) \subset \Omega$, and by compactness a finite number of such balls cover $K$, and we let $U=\bigcup_{i=1}^N B(x_i,r_i)$ be their union. It follows that $x + tv\in U$ for all $x\in K$, $t\in [-2,2]$ and $\|v\| \leq \delta := \min r_i/3 > 0$. Now fix $x\in K$ arbitrarily and fix $v$ arbitrarily such that $\|v\| \leq \delta$. Consider the function

\begin{equation}
	f(t) := u(x + tv).
\end{equation}
as a function of $t$, twice differentiable in the Alexandrov sense, and defined on some open interval V such that $[0,1] \subset V$. Now assume that $du_x(v) = A > 0$ for some $A$, or equivalently, $f'(0) = A$. We then have

\begin{equation}
\begin{split}
	f(t) &= f(0) + \int_0^t \left[f'(0) + \int_0^s f''(0) d\tau\right] ds \\
&= f(0) + tA + \int_0^t\int_0^s \langle v \nabla^2 u |v\rangle d\tau ds \\
&\geq f(0) + tA + \int_0^t \int_0^s -C_{\phi_0} d\tau ds \\
&= f(0) + tA - \frac{t^2 C_{\phi_0}}2.
\end{split}
\end{equation}
for some constant $C_{\phi_0}$ depending only on $\phi_0$ and $\delta$. Then setting $t=1$ we get 

\begin{equation}
	A \leq f(1) - f(0) + C_{\phi_0} \leq \sup u - \inf u + C_{\phi_0},
\end{equation}
and by Proposition \ref{prop:c0-estimate} we get an uniform upper bound on $A$. Replacing $v$ by $-v$ yields also a uniform lower bound, which then gives the desired bound on $\|u\|_{Lip}$.
\end{proof}

Using these a priori estimates the existence of a minimizer can be established.

\begin{proof}[Proof of the existence part of Proposition~\ref{thm:weak_solutions}]
	Let $\phi_k$ be an infimizing sequence of $F$, and define $u_k := \phi_k - \phi_0 \in C^0(M)$. First we note that the functional $F$ is invariant under the map $\phi \mapsto \phi + C$, and hence we may assume that the sequences are normalized such that $\int_M (\phi_k - \phi_0 )d\mu = \int_M u_k d\mu = 0$. Second we note that since $F(\phi^{**}) \leq F(\phi)$, we may assume that $\phi_i$ lie in the Kahler class of $\phi_0$. Then, since $u_k \in C_0(M)$ it follows that $\sup_X u_k \geq 0 \geq \inf_X u_k$, and hence by Proposition \ref{prop:c0-estimate} $\|u\|_{C_0(M)}$ is uniformly bounded. Furthermore, $\|u\|_{Lip}$ is uniformly bounded by Proposition \ref{prop:lip-estimate}. By the Arzela-Ascoli theorem we can thus extract a subsequence converging as $u_k \rightarrow u$ in $C_0(M)$, and thus also convergence $\phi_k \rightarrow \phi$. To show that $\phi$ is indeed a minimizer of $F$ it remains to show that $F$ is continuous as a map $C_0(M) \rightarrow \mathbb{R}$. To show this it suffices to show that $\phi^* = \lim (\phi_k)^*$. But this follows from the general claim that $|\inf f - \inf g| \leq \sup |f - g|$, since $|\phi^*(p) - \phi_k ^*(p)| = |\inf_{x\in \Omega} \Phi_q(x) - \inf_{x\in \Omega} \Phi_{k,q}|$. To show the claim, assume that $\inf f \leq \inf g$, let $x_\epsilon$ be such that $\inf f \geq f(x_\epsilon) - \epsilon$. Then we have that
	
	\begin{equation}
	    -|\inf f - \inf g | = \inf f - \inf g \geq f(x_\epsilon) - \epsilon  - g(x_\epsilon) \geq -\epsilon - \sup |f-g|.
	\end{equation}
	Letting $\epsilon \rightarrow 0$ yields the claim.
\end{proof}


\begin{proof}[Proof of regularity part of Proposition~\ref{thm:monge-ampere}]
Fix a point $x\in \Omega$, a point $p\in \Omega^*$ and a small open ball $B(x,r)\ni x$. Then, since $\phi$ solves a Monge-Ampere equation it follows that $d\Phi_p: B(x,r) \rightarrow d\Phi_p(B)$ is a Brenier map for an optimal transportation of retrictions of $\mu$ and $\nu$. By Caffarelli's regularity theory \cite{villani2008optimal}[Thm 4.14], since $d\Phi_p(B)$ is a convex domain, it then follows that we have that $\Phi_p \in C^{2,\alpha}(B(x,r))$, and thus also that $\pi^*u = \Phi_p - \Phi_{0,p} \in C^{2,\alpha}(B(x,r))$. But fixing a relatively compact set $K$, covering $\bar{K}$ with $B(x,r/2)$ and passing to a finite subcover yields that $u \in C^{2,\alpha}(K)$. The same argument yields the $C^{k+2,\alpha}$ result.
\end{proof}

Uniqueness of minimizers follows from a convexity argument.

\begin{proof}[Proof of uniqueness part of Proposition~\ref{thm:monge-ampere}]
	Assume that there are two minimizers $\psi_0,\psi_1$, both normalized such that $\int (\psi_i - \phi_0) \mu =0$, and let $\psi_t = (1-t)\psi_0 + t\psi_1$. Then

\begin{equation}
\begin{split}
	\inf \Psi_{t,p} = \inf \left[(1-t) (q^*\psi_0 - p) + t(q^*\psi_1 - p) \right] \geq (1-t) \inf \Psi_{0,p} + t \inf \Psi_{1,p}.
\end{split}
\end{equation}
and hence $\psi^*_t \leq (1-t)\psi^*_0 + (1-t)\psi_1^*$ holds pointwise. It follows that $F(\psi_t) \leq F(\psi_0)$. Now assume that $\psi^*_t(p) < \psi^*_0(p)$ in some point $p$. By continuity this then holds also for some open set $U \in M^*$. But using the pointwise inequality on $M\setminus U$ and strict inequality on $U$ we get that $\int_{M^*}( \psi^*_t - \phi^*_0 ) d\nu < \int_{M^*}( \psi^*_0 - \phi^*_0 )d\nu$, contradicting the minimality of $\psi_0$. Hence $\psi^*_t = \psi^*_0$ for all $t$, and uniqueness follows.
\end{proof}

Using the uniqueness result we are also able to show continuity of the inverse Monge-Ampére operator.

\begin{theorem}
	Let $(M,L,\phi_0)$ be a compact Hessian manifold, and let $\nu$ be a fixed absolutely continuous probability measure on $M^*$, with full support. Then if $\mu_i \rightarrow \mu$ are probability measures converging in the weak$^*$ topology, the solutions $\phi_i \rightarrow \phi$ of $\MA_\nu \phi_i = \mu_i$, normalized such that $\int_M ( \phi^*_i - \phi^*_0 )\mu = 0$, converge in the $C^0$-topology, where $\MA_\nu \phi = \mu$.
\label{thm:inv-ma-cont}
\end{theorem}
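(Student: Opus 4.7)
The plan is to combine the compactness statement of Theorem~\ref{thm:compactness}, continuity of the Legendre transform under uniform convergence, and the uniqueness part of Proposition~\ref{thm:monge-ampere}. Since each $\phi_i$ lies in the K\"ahler class of $\phi_0$, Theorem~\ref{thm:compactness} gives that $\{\phi_i\}$ is precompact in the topology of uniform convergence modulo $\mathbb{R}$; the prescribed normalization pins down the additive constant continuously, so any subsequence has a further subsequence converging uniformly to some convex section $\psi$. It therefore suffices to show that every such subsequential limit $\psi$ satisfies $\MA_\nu \psi = \mu$, because then by the uniqueness part of Proposition~\ref{thm:monge-ampere} (which applies since $\nu$ is absolutely continuous with full support) $\psi$ is unique under the given normalization, and so the whole sequence converges to this common limit.

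To identify the limit I would verify that $\MA_\nu$ is continuous under uniform convergence when $\nu$ is absolutely continuous. The elementary estimate $\sup_{M^*} |\phi_i^* - \psi^*| \leq \sup_M |\phi_i - \psi|$, already established during the existence proof of Proposition~\ref{thm:weak_solutions}, upgrades $\phi_i \to \psi$ uniformly on $M$ to $\phi_i^* \to \psi^*$ uniformly on $M^*$. Passing to the cover via the identification of Remark~\ref{rem:leg-bi}, one is reduced to a statement about uniformly convergent sequences of convex functions on an open subset of $\R^n$: a standard result in convex analysis then gives that gradients converge Lebesgue-almost everywhere, and hence $T_{\phi_i} \to T_\psi$ pointwise $\nu$-almost everywhere. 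The maps $T_{\phi_i}$ all take values in the compact manifold $M$, so by dominated convergence $(T_{\phi_i})_* \nu \to (T_\psi)_* \nu$ in the weak$^*$ topology; that is, $\MA_\nu \phi_i \to \MA_\nu \psi$. Combining with the hypothesis $\MA_\nu \phi_i = \mu_i \to \mu$ gives $\MA_\nu \psi = \mu$, as desired.

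With the limit characterized, uniqueness from Proposition~\ref{thm:monge-ampere} forces $\psi = \phi$, so the full sequence (not just subsequences) converges uniformly, completing the argument. The main obstacle, and the step that deserves the most care, is the second one: one needs the gradient-convergence property $d\phi_i^* \to d\phi^*$ almost everywhere on $M^*$, plus the fact that the absolute continuity of $\nu$ promotes this $\nu$-almost everywhere convergence to weak$^*$ convergence of the pushforward measures. The first ingredient is a classical fact for convex functions on Euclidean space applied on the universal cover, while the second relies essentially on the assumption that $\nu$ is absolutely continuous with full support, precisely so that $\psi^*$ is differentiable $\nu$-almost everywhere and no mass is lost along the sequence.
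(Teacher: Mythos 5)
Your argument is correct, but it identifies the subsequential limit by a genuinely different mechanism than the paper. The paper never touches convergence of gradients: after extracting a uniformly convergent subsequence $\phi_i\to\bar\phi$ via Theorem~\ref{thm:compactness}, it works purely at the level of the Kantorovich functionals, showing $\limsup F_i(\phi_i)\le F(\phi)$ from the minimality of $\phi_i$ for $F_i$ together with weak-$*$ convergence of $\mu_i$, and separately $F_i(\phi_i)\to F(\bar\phi)$, so that $\bar\phi$ is a minimizer of $F$; uniqueness of minimizers then forces $\bar\phi=\phi$. You instead prove sequential continuity of the operator $\MA_\nu$ itself: uniform convergence of $\phi_i$ upgrades to uniform convergence of $\phi_i^*$ via the contraction estimate for the Legendre transform, the classical stability of gradients of convex functions gives $T_{\phi_i}\to T_\psi$ at every differentiability point of $\psi^*$, hence $\nu$-a.e.\ by absolute continuity, and dominated convergence yields $(T_{\phi_i})_*\nu\to(T_\psi)_*\nu$ weak-$*$, whence $\MA_\nu\psi=\mu$. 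Your route buys a reusable stability statement for the Monge-Amp\`ere operator (and sidesteps the functionals entirely), at the cost of importing the a.e.\ gradient-convergence fact from convex analysis; the paper's route is softer and uses only the variational structure it has already built. One small bookkeeping point: the uniqueness you invoke at the end is for \emph{solutions} of $\MA_\nu\psi=\mu$, whereas the uniqueness part of Proposition~\ref{thm:monge-ampere} is stated for \emph{minimizers} of $F$; to close the loop you should either cite the uniqueness clause of Theorem~\ref{thm:intro-geom} or note that the Gateaux-derivative computation in Proposition~\ref{thm:weak_solutions} together with convexity of $F$ identifies solutions with minimizers.
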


\begin{proof}
    We claim that Theorem~\ref{thm:compactness} yields that up to subsequence $\phi_i \rightarrow \bar{\phi}$ in the $C^0$ topology. Indeed, as in the existence proof of Proposition~\ref{thm:monge-ampere}, we have that $\phi_i^*$ has a converging subsequence, and the claim then follows from the continuity of the Legendre transform. Furthermore, note that in fact $\int_{M^*} (\bar{\phi}^* - \phi_0 ^*) d\nu = 0$. 
	
	Let $F_i$, $F$ denote the Kantorovich functionals corrensponding to $\mu_i, \mu$, and let $\phi$ be the solution to $\MA_\nu \phi = \mu$, normalized such that $\int_{M^*} (\phi^* - \phi_0^*) d\nu = 0$. Then we by minimality of $\phi_i$ for $F_i$ have that

\begin{equation}
\begin{split} 
	F_i(\phi_i) 	&\leq F_i(\phi) = \int_M (\phi- \phi_0)d\mu_i \\
			&= \int_M (\phi-\phi_0) d\mu + \int_M (\phi-\phi_0)(d\mu_i - d\mu).
\end{split}
\end{equation}
Since $\mu_i \rightarrow \mu$ and $\phi-\phi_0$ is bounded and continuous, taking limits we obtain $\limsup F_i(\phi_i) \leq F(\phi)$. On the other hand we have

\begin{equation}
\begin{split}
	F_i(\phi_i) &= \int_M (\phi_i -\phi_0)d\mu_i \\
		    &= \int_M(\bar{\phi} - \phi_0)d\mu + \int_M(\phi_i - \bar{\phi})d\mu_i + \int_M(\bar{\phi} - \phi_0) (d\mu_i - d\mu) \\
		    &= F(\bar{\phi}) + \int_M(\phi_i - \bar{\phi})d\mu_i + \int_M(\bar{\phi} - \phi_0) (d\mu_i - d\mu).
\end{split}
\end{equation}
Since $\phi_i \rightarrow \bar {\phi}$ and $\mu_i$ is of mass $1$, we have that $\left|\int_M(\phi_i - \bar{\phi})d\mu_i\right| \leq \sup_M |\phi_i-\bar{\phi}|  \rightarrow 0$, and by weak-$*$ convergence we have that $\int_M(\bar{\phi} - \phi_0) (d\mu_i - d\mu) \rightarrow 0$. Taking limits we thus obtain that $F(\bar{\phi}) = \lim F_i(\phi_i) \leq F(\phi)$, which shows that $\bar{\phi}$ is a minimizer of $F$. By the uniqueness part of Proposition~\ref{thm:monge-ampere} it follows that $\bar{\phi} = \phi$, and consequently $\phi_i \rightarrow \phi$
\end{proof}

\section{The Pairing and Optimal Transport}
\label{sec:pairing}
Let $M_1$ and $M_2$ be two affine manifolds. Consider their product $M_1\times M_2$ and let $q_1$ and $q_2$ be the projections of $M_1\times M_2$ onto $M_1$ and $M_2$ respectively. Assume $L_1$ and $L_2$ are affine $\R$-bundles over $M_1$ and $M_2$ respectively. Then there is a natural affine $\R$-bundle over $M_1\times M_2$ given by
$$ 
L\boxplus L^* = q_1^* L_1 + q_2^* L_2. 
$$
Given a Hessian manifold $(M,L,\phi)$ we will show that $L\boxplus -L^*$ has a canonical section. We will use the notation $[\cdot,\cdot]$ for this section and it will play the same role as the standard pairing between $\R^n$ and $(\R^n)^*$ in the classical Legendre transform. The definition will be given in terms of a section in $K\boxplus -K^*$. We will then show that this section defines a section in $L\boxplus -L^*$. Indeed, the actions of $\Pi$ on $K$ and $K^*$ defines an action by $\Pi\times \Pi$ on $K\times -K^*$ given by
$$ 
(\gamma_1,\gamma_2) (y-q) = \gamma_1(y) - \gamma_2.q
$$
and we may recover $L\boxplus -L^*$ as the quotient $K\boxplus -K^*/\Pi\times \Pi$.


\begin{definition}
Let $(M,L,\phi)$ be a Hessian manifold and $K\rightarrow \Omega$ and $K^*\rightarrow \Omega^*$ be the associated objects defined in the previous section. Given $(x,p)\in \Omega \times \Omega^*$, let $q$ be a point in the fiber of $K^*$ over $p$. We define
$$
[x,p] = \sup_{\gamma\in \Pi} \gamma.q(x)-q. 
$$
\end{definition}

\begin{remark} 
To see that this is well-defined we must verify that it is independent of the choice of $q$. But this follows immediately since any other choice can be written as $q' = q + C$ for some $C\in \mathbb{R}$ and thus
$ \gamma.q'(x)-q' = \gamma.q(x)-q. $
\end{remark}

\begin{lemma}
The pairing $[\cdot,\cdot]$ descends to a section of $L\boxplus -L^*$. 
\end{lemma}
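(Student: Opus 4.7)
The plan is to verify that $[x,p]$ is a well-defined section of $K \boxplus -K^*$ and that it is equivariant under the $\Pi \times \Pi$-action, so that it descends to $L \boxplus -L^*$. Independence of the chosen lift $q$ in the fibre over $p$ has already been recorded in the remark above, and the action of $\Pi \times \Pi$ on $K \boxplus -K^*$ is built coordinatewise, so only the equivariance in each factor separately needs to be checked.

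First I would rewrite the defining formula in a form convenient for calculations. By Lemma~\ref{lemma:action2}, as an element of the fibre $K_y$,
\[
\gamma.q(y) \;=\; \pi^*\phi(y) - \Phi_q(\gamma^{-1} y),
\]
so that
\[
\sup_{\gamma \in \Pi} \gamma.q(y) \;=\; \pi^*\phi(y) - \inf_{\gamma \in \Pi} \Phi_q(\gamma^{-1} y).
\]
Since $q \in K^*$ means that $\Phi_q$ is bounded from below, this infimum is finite, so $[x,p] = \sup_\gamma \gamma.q(x) - q$ is a well-defined element of the $\R$-torsor $(K \boxplus -K^*)_{(x,p)}$.

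For the $\Omega^*$-variable, take the lift $\gamma_2.q$ above $\gamma_2.p$ and reindex the supremum via $\sigma = \gamma \gamma_2$:
\[
[x,\gamma_2.p] \;=\; \sup_\gamma (\gamma \gamma_2).q(x) - \gamma_2.q \;=\; \sup_\sigma \sigma.q(x) - \gamma_2.q,
\]
which is exactly the $(1,\gamma_2)$-action applied to $[x,p]$. For the $\Omega$-variable, using the $\Pi$-equivariance $\pi^*\phi(\gamma_1 x) = \gamma_1(\pi^*\phi(x))$ and reindexing $\sigma = \gamma_1^{-1}\gamma$ in the infimum,
\[
\sup_\gamma \gamma.q(\gamma_1 x) \;=\; \pi^*\phi(\gamma_1 x) - \inf_\sigma \Phi_q(\sigma^{-1} x) \;=\; \gamma_1\bigl(\pi^*\phi(x) - \inf_\sigma \Phi_q(\sigma^{-1} x)\bigr),
\]
where the last equality uses that the $\Pi$-action on $K$ is a translation of the base fixing the $\R$-fibre coordinate. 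Subtracting $q$, this gives $[\gamma_1 x, p] = (\gamma_1,1).[x,p]$. Combining the two invariances yields $[\gamma_1 x,\gamma_2 p] = (\gamma_1,\gamma_2).[x,p]$, so the section descends to $L \boxplus -L^*$ over $M \times M^*$.

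The only mild subtlety is that the supremum must be interpreted inside the $\R$-torsor $K_x - K^*_p$ rather than as a supremum of real numbers in an intrinsic sense; once one fixes any reference element this is unambiguous, and because $\Pi$ acts on $K$ and on $K^*$ by fibrewise translations, all of the reindexings above commute with taking $\sup$ in the expected way.
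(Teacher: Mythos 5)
Your proof is correct and follows essentially the same route as the paper: equivariance is checked separately in each factor by reindexing the supremum ($\sigma=\gamma\gamma_2$ in the $M^*$-variable, $\sigma=\gamma_1^{-1}\gamma$ in the $M$-variable), with your rewriting $\gamma.q=\pi^*\phi-\Phi_q\circ\gamma^{-1}$ being only a cosmetic variant of the paper's direct conjugation computation $\gamma.q(\gamma_1(x))=\gamma_1(\gamma_1^{-1}\gamma.q(x))$. The explicit remark that boundedness of $\Phi_q$ from below makes the supremum finite is a welcome addition not spelled out in the paper.
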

\begin{proof}
We need to prove that $[\cdot,\cdot]$ is equivariant, in other words that 
$$ [\gamma_1(x),\gamma_2.p] = (\gamma_1,\gamma_2)[x,p] $$
for all $\gamma_1,\gamma_2\in \Pi$, $x\in M$ and $p\in M^*$. Now, 
\begin{eqnarray}
[\gamma_1(x),p] & = & \sup_{\gamma\in \Pi} \gamma.q(\gamma_1(x))-q
\nonumber \\
& = & \sup_{\gamma\in \Pi} \gamma_1(\gamma_1^{-1}\gamma.q(x))-q
\nonumber \\
& = & \sup_{\gamma\in \Pi} \gamma_1(\gamma.q(x)))-q
\nonumber \\
& = & (\gamma_1,id)[x,p].
\nonumber
\end{eqnarray} 
where the second equality follows from
$$ \gamma.q(\gamma_1(x)) = \gamma\circ q \circ \gamma^{-1}\circ \gamma_1(x) = \gamma_1 \circ (\gamma_1^{-1}\circ\gamma)^{-1} \circ q \circ (\gamma_1^{-1} \circ \gamma)^{-1}(x) = \gamma_1(\gamma_1^{-1}\gamma.q(x)) $$
and the third equality follows from the substitution of $\gamma$ by $\gamma_1^{-1}\gamma$.  
Moreover, 
$$
[x,\gamma_2.p] = \sup_{\gamma\in \Pi} \gamma.\gamma_2.q(x) - \gamma_2.q
= \sup_{\gamma\in \Pi} \gamma.q(x)-\gamma_2.q = (id, \gamma_2)[x,p].
$$ 
where the second equality is given by substituting $\gamma$ by $\gamma \gamma_2$. 
This proves the lemma.
\end{proof}

\begin{lemma}
\label{lemma:legdefpairing}
Assume $(M,L)$ is a compact Hessian manifold and $\phi$ is a continuous section of $L$, then
\begin{equation} \phi^*(p) = \sup_{x\in M} [x,p]-\phi(x). \label{eq:legdefpairing} \end{equation}
Moreover, $d\phi$ is defined at a point $x\in M$ and $d\phi(x)=p$ if and only if $p$ is the unique point in $M^*$ such that
$$ \phi^*(p)=[x,p]-\phi(x). $$
\end{lemma}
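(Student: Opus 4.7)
For the first statement \eqref{eq:legdefpairing}, the strategy is to rewrite both $\phi^*(p)$ and $\sup_{x\in M} [x,p] - \phi(x)$ as a single supremum of $-\Phi_q$ over $\Omega$ and identify them. First I would fix any $q \in K^*$ in the fiber over $p$ and, for each $x \in M$, a lift $\tilde{x} \in \Omega$. Unwinding the definition of the pairing gives $[x,p] - \phi(x) = -q + \sup_{\gamma \in \Pi} \bigl(\gamma.q(\tilde{x}) - \pi^*\phi(\tilde{x})\bigr) = -q + \sup_{\gamma \in \Pi} \bigl(-\Phi_{\gamma.q}(\tilde{x})\bigr)$. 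Applying the covariance $\Phi_{\gamma.q} = \Phi_q \circ \gamma^{-1}$ from Lemma~\ref{lemma:action1} turns this into $-q + \sup_{\gamma \in \Pi} \bigl(-\Phi_q(\gamma^{-1}\tilde{x})\bigr)$, i.e.\ a sup of $-\Phi_q$ over the $\Pi$-orbit of $\tilde{x}$. Taking the sup over $x \in M$ lets $\tilde{x}$ range over a fundamental domain; since every point of $\Omega$ lies in exactly one orbit, the double sup collapses to $\sup_{\tilde{x}' \in \Omega} -\Phi_q(\tilde{x}')$, which is precisely the right-hand side of the defining formula \eqref{eq:legendre_simply}, namely $\phi^*(p) + q$. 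The $-q$ and $+q$ cancel in the appropriate affine $\R$-fiber, yielding the identity.

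For the second statement, the equality $\phi^*(p) = [x,p] - \phi(x)$ holds iff the sup $\sup_{\tilde{x}' \in \Omega} -\Phi_q(\tilde{x}')$ is attained on the orbit $\Pi \cdot \tilde{x}$, equivalently iff $\Phi_q$ achieves its global minimum at some lift of $x$. After shifting $q$ by a constant (which does not change its image $p$) so that this minimum value is $0$, the condition is that $q$ is tangent to $\pi^*\phi$ at that lift, which is exactly Definition~\ref{def:differential} of $d\phi(x) = p$. Uniqueness of $p$ in $M^*$ follows from strict convexity of $\Phi_q$: any two affine sections tangent to $\pi^*\phi$ at $\tilde{x}$ must agree to first order there, hence differ by a constant and descend to the same point in $\Omega^* = K^*/\R$ and thus the same $p \in M^*$. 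For the converse, that uniqueness of $p$ forces $d\phi(x)$ to exist, I would pass to the cover via Remark~\ref{rem:identification} and invoke the classical convex-analytic fact that a convex function on $\R^n$ is differentiable at a point iff its subdifferential there is a singleton, applied to $\Phi_{q_0}$.

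The main obstacle I expect is bookkeeping rather than substance: $[x,p]$ is a section of $L \boxplus -L^*$, with an additive ambiguity both from the $\R$-action on the fiber of $K^* \to \Omega^*$ over $p$ and from the choice of lift $\tilde{x}$, and one must ensure that the cancellation $-q + (\phi^*(p) + q) = \phi^*(p)$ takes place in the correct affine $\R$-space, not merely numerically. Once these identifications are consistently set up, the computation is driven entirely by the covariance $\Phi_{\gamma.q} = \Phi_q \circ \gamma^{-1}$ together with strict convexity of $\Phi_q$ on the cover.
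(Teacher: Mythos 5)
Your proposal is correct and follows essentially the same route as the paper: for the first identity you unwind $[x,p]$ into the double supremum over $\gamma\in\Pi$ and a fundamental domain, apply $\Phi_{\gamma.q}=\Phi_q\circ\gamma^{-1}$ from Lemma~\ref{lemma:action1}, and collapse it to $-q+\sup_{\Omega}(-\Phi_q)=\phi^*(p)$; for the second you reduce, exactly as the paper does via the Fenchel equality on the cover, to the statement that equality holds iff $0\in\partial\Phi_q(\tilde x)$, with uniqueness of $p$ equivalent to the subdifferential being a singleton. Your explicit attention to the additive bookkeeping in the fibers and to both directions of the differentiability equivalence is sound and matches the paper's use of Remark~\ref{rem:leg-bi} and Definition~\ref{def:differential}.
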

\begin{proof}
The right hand side of \eqref{eq:legdefpairing} is given by
\begin{eqnarray} 
\sup_{x\in M} \sup_{\gamma\in \Pi} \gamma.q(x) - q - \phi(x) 
& = & -q +\sup_{x\in M} \sup_{\gamma\in \Pi} -\Phi_{\gamma.q}(x) \nonumber \\
& = & -q +\sup_{x\in M} \sup_{\gamma\in \Pi} -\Phi_q(\gamma^{-1}(x)) \nonumber \\
& = & -q +\sup_{x\in \Omega} -\Phi_q(x) = \phi^*(p) \nonumber
\end{eqnarray}
where, as usual, $q$ is a point in the fiber above $p$.
To prove the second point, note that $d\phi(x)=p$ if and only if there is $\tilde x\in \Omega$ above $x$ and $q\in K^*$ above $p$ such that
$$ d\Phi_q(\tilde x) = 0. $$
By standard properties of convex functions this is true if and only if 
$$ \Phi_q^*(0) = - \Phi_q(\tilde x).  $$
Since $\Phi_q^*(0) \geq -\Phi_q(x') $ for any $x'\in \Omega$ we get
\begin{equation} \Phi_q^*(0) = - \sup_{\gamma\in \Pi} \Phi_q\circ\gamma^{-1}(\tilde x) \label{eq:diffargument} \end{equation}
Using the notation in Remark~\ref{rem:leg-bi} we have $\Phi_q^* = \phi^*+L$. Putting $q=L(p)$ we get, using that $\Phi_q\circ \gamma = \Phi_{\gamma.q}$, that \eqref{eq:diffargument} is equivalent to
\begin{eqnarray}
\phi^*(p) & = & -q +\sup_{\gamma\in \Pi} \Phi_{\gamma.q}(\tilde x) \nonumber \\
& = & \sup_{\gamma\in \Pi}-q + \gamma.q(\tilde x) - \phi(x) \nonumber \\
& = & [x,p]-\phi(x). \nonumber
\end{eqnarray}
\end{proof}

\subsection{Recap: Kantorovich Problem of Optimal Transport}
Let $X$ and $Y$ be topological manifolds, $\mu$ and $\nu$ (Borel) probability measures on $X$ and $Y$ respectively and $c$ be a real-valued continuous function on $X\times Y$. Then the associated problem of optimal transport is to minimize the quantity 
$$ I_c(\gamma) = \int_{X\times Y} c(x,y) \gamma $$
over all probability measures $\gamma$ on $X\times Y$ such that its first and second marginals coincide with $\mu$ and $\nu$ respectively. A probability measure $\gamma$ with the above property is called a transport plan. Under regularity assumptions (see \cite{villani2008optimal}) on $\mu$, $\nu$ and $c$, $I_c$ will admit a minimizer $\bar{\gamma}$ which is supported on the graph of a certain map $T:X\rightarrow Y$ called the optimal transport map. If this is the case then $\bar{\gamma}$ is determined by $T$ and 
$$ \bar{\gamma} = (\id\otimes T)_* \mu. $$ 
where $\id$ is the identity map on $X$. 
\begin{remark}
See the introductions of \cite{villani2003topics} and \cite{villani2008optimal} for very good heuristic interpretations of transport plans and transport maps.  
\end{remark}
\begin{remark}\label{rem:equivalentcostfunctions}
Assume two cost functions $c$ and $c'$ satisfy
\begin{equation}
\label{eq:equivalentcostfunctions}
c'(x,y) = c(x,y) + f(x) + g(y) \end{equation}
where $f\in L^1(\mu)$ and $g\in L^1(\nu)$ are functions on $X$ and $Y$ respectively. Then they determine the same optimal transport problem in the sense that
\begin{eqnarray} I_{c'}(\gamma) & = & \int_{X\times Y} c' \gamma = \nonumber \\
& = & \int c \gamma + \int_X f \gamma + \int_Y g \gamma \nonumber \\
& = & \int c \gamma + \int_X f \mu + \int_Y g \nu \nonumber \\
& = & I_{c} + C \nonumber
\end{eqnarray}
where $C$ is a constant independent of $\gamma$. In particular, $I_{c}$ and $I_{c'}$ have the same minimizers. Motivated by this we will say that two cost functions $c$ and $c'$ are equivalent if \eqref{eq:equivalentcostfunctions} holds for some integrable functions $f$ and $g$ on $X$ and $Y$ respectively.
\end{remark}

Two important cases is worth mentioning. The first is when $X=Y$ is a Riemannian manifold and $c(x,y) = d^2(x,y)/2$ where $d$ is the distance function induced by the Riemannian metric. The other, which can in fact be seen as a special case of this, is when $X=\R^n$ and $Y=(\R^n)^*$ and $c(x,y)=-\langle x,y \rangle$, where $\langle \cdot,\cdot \rangle$ is the standard pairing between $X$ and $Y$. Now, let $d$ be the standard Riemannian metric on $\R^n$. This induces an isomorphism of $X$ and $Y$ and 
$$ d(x,y)^2/2 = |x-y|^2/2 = |x|^2/2 - \langle x,y \rangle + |y^2|/2. $$
In other words $-\langle \cdot,\cdot \rangle$ and $d(\cdot,\cdot)^2/2$ are equivalent as cost functions, as long as $\mu$ and $\nu$ have finite second moments. 

To see the relation between our setup and optimal transport we need to consider the Kantorovich dual of the problem of optimal transport. Let $f$ be a continuous function on $X$. Its $c$-transform is the function on $Y$ given by
$$ f^c(y) = \sup_{x\in X} -c(x,y) - f(x). $$
Moreover, if $x\in X$ satisfies
$$ f^c(y) = -c(x,y) - f(x) $$
for a unique $y\in Y$, then the $c$-differential of $f$, $d^c f$, is defined at $x$ and $d^cf(x) = y$. 

The dual formulation of the problem of optimal transport above is to minimize the quantity
$$ J(f) = \int_X f \mu + \int_Y f^c \nu. $$ 
over all continuous functions on $X$. Let $\Pi(\mu,\nu)$ be the set of transport plans. We have the following 
\begin{theorem}[See theorem 5.10 in \cite{villani2008optimal}]
\label{thm:villani}
Let $X$, $Y$, $\mu$, $\nu$ and $c$ be defined as above. Then, under certain regularity conditions (see \cite{villani2008optimal} for details)
\begin{equation}
\label{eq:duality}
\inf_\gamma I(\gamma) = -\inf_\phi I(\phi). 
\end{equation}
Moreover, both $I$ and $J$ admits minimizers and the minimizer of $I$ is supported on the graph of $T= d^c f$ were $f$ is the minimizer of $J$. 
\end{theorem}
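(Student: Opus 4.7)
Since the cited statement is Villani's Kantorovich duality theorem, the plan is to follow the standard line of argument in \cite{villani2008optimal}, which separates into three essentially independent components: weak duality, existence of minimizers, and the graph structure of the optimal plan.

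The first step is weak duality. For any $\gamma \in \Pi(\mu,\nu)$ and any $f \in C_b(X)$, the definition of the $c$-transform gives the pointwise bound $-c(x,y) \leq f(x) + f^c(y)$. Integrating against $\gamma$ and using that the marginals of $\gamma$ are $\mu$ and $\nu$ yields $-I(\gamma) \leq J(f)$, hence $\sup(-J) \leq \inf I$. The reverse inequality is the nontrivial strong duality. I would deduce it from Fenchel--Rockafellar applied on $C_b(X \times Y)$ to the convex functionals $\Theta_1(u) = -\int a\,d\mu - \int b\,d\nu$ for $u(x,y) = a(x) + b(y)$ (and $+\infty$ otherwise), and $\Theta_2(u) = 0$ if $u \leq -c$, $+\infty$ otherwise; separation of the corresponding convex sets produces the dual minimizer and the equality. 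Alternatively the minimax theorem achieves the same.

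The second step is existence. Since the singletons $\{\mu\}$ and $\{\nu\}$ are tight, Prokhorov's theorem shows $\Pi(\mu,\nu)$ is tight, hence weakly sequentially compact; it is also weak-$*$ closed. With $c$ continuous and bounded below, $I$ is weakly lower semicontinuous, so the direct method gives a minimizer $\bar\gamma$. For $J$, the key observation is that $J(f^{cc}) \leq J(f)$, so one restricts to $c$-concave functions, which inherit an equicontinuity estimate from $c$; modulo the invariance $f \mapsto f + C$, $f^c \mapsto f^c - C$, Arzel\`a--Ascoli supplies a minimizer.

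The main obstacle, and the hardest step, is the graph structure. The classical strategy shows first that any optimal $\bar\gamma$ is concentrated on a $c$-cyclically monotone set $\Gamma \subseteq X \times Y$, and that such $\Gamma$ is contained in the $c$-superdifferential $\partial^c f$ of any maximizing $c$-concave $f$. Under a twist condition on $c$ and suitable regularity of $\mu$ (absolute continuity plus non-degeneracy), $\partial^c f$ is single-valued $\mu$-a.e.\ and coincides with the graph of the $c$-differential $d^c f$. One concludes $\bar\gamma = (\id \otimes d^c f)_* \mu$. The subtleties lurking in the ``regularity conditions'' of the statement precisely encode what is needed to pass from $c$-cyclical monotonicity to a $c$-differential, and this is the point where one must invoke the detailed hypotheses in Villani's formulation.
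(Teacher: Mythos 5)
The paper gives no proof of this statement---it is imported verbatim from Theorem 5.10 of \cite{villani2008optimal}---so there is no internal argument to compare against; your outline (weak duality from the pointwise inequality $-c(x,y)\leq f(x)+f^c(y)$, strong duality via Fenchel--Rockafellar or a minimax argument, existence via Prokhorov for $I$ and via the reduction to $c$-concave functions plus Arzel\`a--Ascoli for $J$, and the graph structure via $c$-cyclical monotonicity together with a twist/single-valuedness condition) faithfully reproduces the standard proof in the cited source, and correctly locates the ``regularity conditions'' of the statement in the last step. One cosmetic remark: the displayed identity in the paper reads $\inf_\gamma I(\gamma) = -\inf_\phi I(\phi)$, where the right-hand $I$ should be $J$; your weak-duality step implicitly uses the corrected form $\inf_\gamma I(\gamma) = -\inf_f J(f)$.
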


\subsection{Affine $\R$-Bundles and Cost Functions}
\begin{definition}
Let $(M,L,\phi_0)$ be a compact Hessian manifold. We say that the associated cost function on $M\times M^*$ is
$$ c(x,y) = -[x,y]+\phi_0(x)+\phi_0^*(y). $$
\end{definition}
\begin{remark}
Let $\phi$ be another convex section of $L$. Then $\phi - \phi_0$ and $\phi^* - \phi_0^*$ are continuous functions on $M$ and $M^*$ respectively. Let $c'$ be the cost function induced by $(M,L,\phi)$. Then 
$$ c'(x,y) = c(x,y) - (\phi-\phi_0) - (\phi^*-\phi_0^*). $$
This means $(M,L,\phi_0)$ and $(M,L,\phi)$ determine equivalent cost functions (in the sense of Remark~\ref{rem:equivalentcostfunctions}). We conclude that under this equivalence the induced cost function on a Hessian manifold only depends on the data $(M,L)$. 
\end{remark}

Now, $\phi \mapsto f = \phi - \phi_0 $ defines a map from the space of continuous sections of $L$ to the space of continuous functions on $M$. 
\begin{lemma}
\label{lemma:cleg}
Let 
$$ f = \phi - \phi_0 $$
Then 
$$ f^c = \phi^*-\phi_0^* $$
Moreover, $d^c f$ is defined if and only if $d\phi$ is defined and $d^c f(x) = d\phi(x)$ for all $x$ where they are defined. 
\end{lemma}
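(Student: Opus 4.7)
The plan is to carry out both parts by direct substitution, taking advantage of the fact that the cost function $c$ was designed precisely so that Lemma~\ref{lemma:legdefpairing} converts the $c$-transform into the Legendre transform.

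For the first claim, I would start from the definition
\[
f^c(y) = \sup_{x\in M} -c(x,y) - f(x) = \sup_{x\in M} \bigl([x,y] - \phi_0(x) - \phi_0^*(y)\bigr) - \bigl(\phi(x) - \phi_0(x)\bigr).
\]
The two occurrences of $\phi_0(x)$ cancel, and $\phi_0^*(y)$ is independent of the supremum variable, so this simplifies to
\[
f^c(y) = -\phi_0^*(y) + \sup_{x\in M}\bigl([x,y] - \phi(x)\bigr).
\]
By Lemma~\ref{lemma:legdefpairing} the remaining supremum equals $\phi^*(y)$, giving $f^c = \phi^* - \phi_0^*$ at once.

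For the second claim, I would unpack the definition of $d^c f$: it is defined at $x$ with value $y$ precisely when $y$ is the unique point in $M^*$ achieving the supremum in $f^c$. By the same cancellation as above, that equality
\[
f^c(y) = -c(x,y) - f(x)
\]
is equivalent to
\[
\phi^*(y) = [x,y] - \phi(x),
\]
and uniqueness of the maximizer $y$ for $f^c$ is equivalent to uniqueness of $y$ satisfying the latter identity. But the second part of Lemma~\ref{lemma:legdefpairing} says exactly that this condition, with uniqueness, characterizes $d\phi(x) = y$. Hence $d^c f(x)$ is defined iff $d\phi(x)$ is, and they coincide where defined.

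I do not anticipate a genuine obstacle here; the lemma is essentially a dictionary statement confirming that the identification $\phi \mapsto \phi - \phi_0$ intertwines the geometric Legendre transform introduced in Section~\ref{sec:legendre} with the Kantorovich $c$-transform for the cost $c(x,y) = -[x,y] + \phi_0(x) + \phi_0^*(y)$. The only thing to be careful about is keeping track of which variable the supremum runs over so that the $\phi_0^*(y)$ term is correctly pulled outside before invoking Lemma~\ref{lemma:legdefpairing}.
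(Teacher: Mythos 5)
Your argument is correct and is essentially identical to the paper's proof: both parts reduce, after the same cancellation of $\phi_0(x)$ and extraction of $\phi_0^*(y)$, to the first and second points of Lemma~\ref{lemma:legdefpairing} respectively. Nothing further is needed.
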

\begin{proof}
Using the first point in Lemma~\ref{lemma:legdefpairing} We have
\begin{eqnarray} 
f^c(p) & = & \sup_{x\in X} -c(x,p) - f(x) = \sup_{x\in M} [x,p]-\phi(x) - \phi_0^*(p) \nonumber \\
& = & \phi^*(p) - \phi_0^*(p) \nonumber
\end{eqnarray}
proving the first part of the lemma. For the second part note that
$$ f(x) + f^c(p) + c(x,p) = \phi(x) + \phi^*(p) - [x,p] $$
hence $f^c(p) = -c(x,p)-f(x)$ if and only if $\phi^*(p) = [x,p] - \phi^*(x)$. Combining this with the second point of Lemma~\ref{lemma:legdefpairing} proves the second part.
\end{proof}

\opttransport*

\begin{proof}
Let $\phi_0$ be a convex section of $L$ and $c$ be the cost function induced by $(M,L,\phi_0)$. By the first part of Lemma~\ref{lemma:cleg}, if
$$ f = \phi - \phi_0, $$
then $F(\phi)=I_c(f)$. If $\phi$ is a minimizer of $F$, then $f$ is a minimizer of $I_c$. By the second part of Lemma~\ref{lemma:cleg} $d\phi = d^cf $ which by Theorem~\ref{thm:villani} is the optimal transport map determined by $\mu$, $\nu$ and $c$. To see that our setting satisifes the conditions in Theorem~5.10 in \cite{villani2008optimal}, note that $c$ is continuous and $M$ and $M^*$ are compact manifolds (hence Polish spaces). By compactness and continuity the integrability properties in 5.10(i) and 5.10(iii) are satisfied. Moreover, by Lemma~\ref{lemma:cleg} the $c$-gradient of $f^c$ is defined (as a single valued map) almost everywhere.  
\end{proof}

Now, when $(M,L)$ is special we may take $\mu$ and $\nu$ to be the unique parallel probability measures on $M$ and $M^*$ respectively. By Theorem~\ref{thm:intro-geom} there is a smooth, strictly convex section $\phi$ of $L$ satisfying 
$$ \MA_\nu(\phi) = \mu. $$ 
Then $\Phi_q$, for some $q\in K^*$ defines a convex exhaustion function on a convex subset of $\R^n$ and $\det(\Phi_{ij})$ is constant. By J\"orgens theorem \cite{trudinger2000bernstein} $\Phi_q$ is a quadratic form and $\Omega=\R^n$. This means $\Phi$ induces an equivivariant flat metric on $\Omega$ and hence a flat metric on $M$. We conlude that any positive affine $\R$-bundle over a special Hessian manifold $M$ induces a flat Riemannian metric on $M$. 

Further, we have
\begin{lemma}\label{lemma:mmisom}
Let $(M,L)$ be a compact special Hessian manifold. Then $M$ and $M^*$ are equivalent as affine manifolds.
\end{lemma}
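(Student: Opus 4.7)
The plan is to leverage the preceding paragraph, which already does most of the hard work: when $(M,L)$ is special and we choose $\mu,\nu$ to be the unique parallel probability measures on $M$ and $M^*$, Theorem~\ref{thm:intro-geom} together with J\"orgens' theorem forces the exhaustion $\Phi_{q_0}$ on the universal cover $\Omega$ to be a strictly convex quadratic polynomial on $\R^n=\Omega$. What remains is to promote the smooth diffeomorphism $d\phi:M\to M^*$ of Theorem~\ref{thm:diffeomorphism} to an affine diffeomorphism, and this is an essentially formal consequence of the quadratic nature of $\Phi_{q_0}$.

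First I would record that by Theorem~\ref{thm:diffeomorphism} we already have a smooth diffeomorphism $d\phi:M\to M^*$, induced by the $\Pi$-equivariant map $d(\pi^*\phi):\Omega\to\Omega^*$ (Lemma~\ref{lemma:equivariant}). Using the identification of Remark~\ref{rem:identification}, this map corresponds to $d\Phi_{q_0}:i_1(\Omega)\to i_2(\Omega^*)$, viewed as a map between open convex subsets of $\R^n$ and $(\R^n)^*$. By the remarks just preceding the lemma statement, $\Phi_{q_0}$ is a quadratic polynomial on all of $\R^n$, so $d\Phi_{q_0}$ is an affine map between real vector spaces (in fact linear after a suitable choice of base point). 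Strict convexity of $\Phi_{q_0}$ makes it an affine isomorphism $\R^n\to(\R^n)^*$, in particular an isomorphism of affine manifolds.

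Next, because $\Pi$ acts on both $\Omega$ and $\Omega^*$ by affine transformations, and the affine isomorphism $d(\pi^*\phi)$ intertwines these actions (Lemma~\ref{lemma:equivariant}), the induced map on quotients $d\phi:M=\Omega/\Pi\to \Omega^*/\Pi=M^*$ is affine in the distinguished atlases: in any affine chart of $M$ pulled back from a fundamental domain of $\Omega$, the map $d\phi$ is represented by the restriction of the affine map $d\Phi_{q_0}$, which lands in an affine chart of $M^*$ pulled back from the corresponding fundamental domain of $\Omega^*$. This is precisely the statement that $d\phi$ preserves the flat torsion-free connections defining the affine structures on $M$ and $M^*$.

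There is no real obstacle in this argument; the only subtlety is bookkeeping about how the identifications of Remark~\ref{rem:identification} turn $d(\pi^*\phi)$ into the honest differential $d\Phi_{q_0}$ of a quadratic polynomial on $\R^n$. Once that translation is in hand, affineness of $d\Phi_{q_0}$ and $\Pi$-equivariance are exactly what is needed to descend to an affine isomorphism $M\cong M^*$.
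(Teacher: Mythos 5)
Your proposal is correct and follows essentially the same route as the paper: choose the parallel probability measures, solve the Monge--Amp\`ere equation, invoke J\"orgens' theorem to make $\Phi_{q_0}$ a quadratic form so that $d\Phi_{q_0}$ is affine, and use equivariance plus Theorem~\ref{thm:diffeomorphism} to descend to an affine diffeomorphism $M\to M^*$. The extra bookkeeping you supply about the identifications of Remark~\ref{rem:identification} and the descent to the quotient is consistent with what the paper leaves implicit.
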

\begin{proof}
Let $\mu$ and $\nu$ be the unique parallel probability measures on $M$ and $M^*$ respectively and let $\phi$ be the solution to 
$$ \MA_\nu(\phi) = \mu. $$
By J\"orgens theorem $\Phi_q$ is a quadratic form. In particular, $d\Phi_q:\Omega \rightarrow \Omega^*$ is an affine map. This means $d\phi:M\rightarrow M^*$ is affine and since it is also a diffeomorphism (by Theorem~\ref{thm:diffeomorphism}) this proves the lemma.  
\end{proof}
In the following proposition and corollary we use the isomorphism in Lemma~\ref{lemma:mmisom} to identify $M$ and $M^*$.
\begin{proposition}
Let $(M,L)$ be a compact special Hessian manifold. Let $\mu$ and $\nu$ be the unique parallel probability measures on $M$ and $M^*$, respectively, and let $\phi$ be the smooth strictly convex section of $L$ satisfying 
$$ \MA_{\nu}(\phi) = \mu. $$
Then the cost function induced by $(M,L,\phi)$ is the squared distance function induced by the flat Riemannian metric determined by $L$.
\label{prop:specialcost}
\end{proposition}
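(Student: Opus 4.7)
The plan is to reduce everything to an explicit computation on the universal cover $\Omega = \R^n$, where by J\"orgens's theorem the Hessian metric becomes the standard Euclidean one. Recall from the argument just before Lemma~\ref{lemma:mmisom} that $\Omega = \R^n$ and, for any $q_0 \in K^*$, $\Phi_{q_0}$ is a strictly convex quadratic form; after a linear change of coordinates we may take $\Phi_{q_0}(\tilde x) = \tfrac{1}{2}|\tilde x|^2$. Its Hessian is then the standard Euclidean metric on $\R^n$, which by $\Pi$-invariance descends to the flat Riemannian metric on $M$ determined by $L$. In particular, $\Pi$ acts on $\R^n$ by affine Euclidean isometries, and the flat distance reads $d(x,y) = \inf_{\tilde x\in\pi^{-1}(x),\,\tilde y\in\pi^{-1}(y)}|\tilde x - \tilde y|$. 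The identification $(\R^n)^* \cong \R^n$ via the Euclidean inner product identifies $\Omega^*$ with $\R^n$ and sends $d(\pi^*\phi)=d\Phi_{q_0}$ to the identity map; this realizes the affine isomorphism $M\cong M^*$ of Lemma~\ref{lemma:mmisom}, under which a lift of $y\in M^*$ becomes simply a point of $\R^n$.

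With these identifications in place, I would evaluate each of the three contributions to $c(x,y) = -[x,y] + \phi(x) + \phi^*(y)$ in a common trivialization, using $q_0$ to trivialize the $L$-fibers and an analogous affine section of $K^*\to\Omega^*$ (e.g.\ the one vanishing at a fixed basepoint) to trivialize the $L^*$-fibers. Writing a representative of $p \in \Omega^*$ as $q = q_0 + A$ with $A(\tilde x) = \langle a,\tilde x\rangle + b$, so that $a$ is the lift of $y$, a direct calculation gives $\phi(x) = \tfrac12|\tilde x|^2$ and $\phi^*(p) = \tfrac12|a|^2$ in these trivializations. For the pairing, Lemma~\ref{lemma:action2} yields $\gamma.q - q = \Phi_q - \Phi_q\circ\gamma^{-1}$; inserting the explicit form of $\Phi_q = \tfrac12|\tilde x - a|^2 - \tfrac12|a|^2 - b$ and completing the square produces
\begin{equation*}
[x,p] \;=\; \tfrac12|\tilde x - a|^2 + \langle a,\tilde x\rangle - \tfrac12 \inf_{\gamma \in \Pi}|\gamma\tilde x - a|^2.
\end{equation*}
Substituting into the definition of $c$, the quadratic and linear terms cancel identically, leaving
\begin{equation*}
c(x,y) \;=\; \tfrac12 \inf_{\gamma \in \Pi}|\gamma\tilde x - a|^2.
\end{equation*}
Since $\Pi$ acts by Euclidean isometries, $|\gamma\tilde x - a| = |\tilde x - \gamma^{-1}a|$, and the infimum over $\gamma$ computes the distance from $\tilde x$ to the orbit of $a$, giving exactly $\tfrac12 d(x,y)^2$.

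The main technical care is bookkeeping of trivializations of the affine $\R$-bundles: each of $\phi(x)$, $\phi^*(p)$ and $[x,p]$ is a priori an element of a fiber of an affine $\R$-bundle rather than a real number, and one must ensure the three trivializations used are compatible so that $c(x,y)$ is an unambiguous real-valued function. A secondary subtlety is that a priori $\Pi$ may contain nontrivial rotational parts and not consist only of translations; however, since these still act by Euclidean isometries the argument goes through unchanged, with the inf over $\gamma$ of $|\gamma\tilde x - a|^2$ equalling the full squared quotient distance.
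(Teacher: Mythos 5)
Your argument is correct and follows essentially the same route as the paper's proof: reduce to the universal cover, invoke J\"orgens's theorem to make $\Phi_{q_0}$ an explicit quadratic form, and compute the three terms of $c(x,p)$ in compatible trivializations so that the quadratic and linear parts cancel, leaving $\tfrac12\inf_{\gamma\in\Pi}$ of the squared Euclidean distance to the orbit. The only cosmetic difference is that you normalize the quadratic form to $\tfrac12|\tilde x|^2$ by a linear change of coordinates, whereas the paper carries a general symmetric matrix $Q$ through the computation.
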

\begin{proof}
Fixing $q_0\in K^*$ and letting $q=L(p)$ as in Remark~\ref{rem:leg-bi} we get
\begin{eqnarray} 
-c(x,p) & = & [x,p] - \phi(x)-\phi^*(p) \nonumber \\
& = & \sup_{\gamma\in \Pi} \gamma.q(x) - q - \phi(x)-\phi^*(p) \nonumber \\
& = & \sup_{\gamma\in \Pi} \gamma.q(x) - \gamma.q_0(x) -\Phi_{\gamma.q_0}(x) - \Phi_{q_0}^*(p) \nonumber \\
& = & \sup_{\gamma\in \Pi} -\Phi_q(\gamma^{-1}(x)) + \langle \gamma^{-1}(x),p\rangle - \Phi_{q_0}^*(p) \nonumber
\end{eqnarray}

By J\"orgens theorem \cite{trudinger2000bernstein} $\Omega=\R^n$ and for a suitable choice of $q_0\in \Gamma(X,L)$ we have 
$$ \Phi_{q_0}(x) = x^T \frac{Q}{2} x $$
for some symmetric real $n\times n$ matrix $Q$. This means $\Omega^*=(\R^n)^*$ and $$\Phi_{q_0}^*(p) = p \frac{Q^{-1}}{2} p^T. $$
Under the identification $p=d\Phi_q(x_2) = x_2^TQ$ we get
\begin{eqnarray} 
-\Phi_q(x) + \langle x,p\rangle - \Phi_{q_0}^*(p) & = & -x_1^T\frac{Q}{2}x_1 + px_1 - y\frac{Q^{-1}}{2}p^T \nonumber \\
& = & x_1^T\frac{Q}{2}x_1 - x_2^TQx_1 + x_2^T \frac{Q}{2} x_2 \nonumber \\
& = & -(x_1-x_2)^T \frac{Q}{2} (x_1-x_2). \nonumber
\end{eqnarray}
In other words 
$$ c(x,p) = -\sup_{\gamma\in \Pi} - (\gamma(x_1)-x_2) \frac{Q}{2} (\gamma(x_1)-x_2) = -d(x,p)^2/2. $$
This proves the proposition.
\end{proof}

\secondopttransport*
\begin{proof}
By Proposition \ref{prop:specialcost} $d^2/2$ is the cost 
function induced by $(M,L,\phi)$. The theorem then follows from Theorem~\ref{thm:optimaltransport}.
\end{proof}

\section{Einstein-Hessian metrics}
\label{sec:KE}
To illustrate that the use of the Legendre transform is not limited to the inhomogeneous Monge-Amp\'ere equation considered in the preceding section, we here consider an analogue of the K\"ahler-Einstein equation on complex manifolds, and give a variational proof of the existence of solutions. 

We first give some brief background on K\"ahler-Einstein metrics in the complex setting as motivation. For a K\"{a}hler manifold $(X,\omega)$, let $\omega_\varphi$ denote the form $\omega_\varphi = \omega + dd^c \varphi$, which we assume to be a K\"ahler form. We call $\omega_\varphi$ a K\"ahler-Einstein metric if the equation

\begin{equation}
    \Ric \omega_\varphi = \lambda \omega_\varphi
    \label{eq:KE-Ric}
\end{equation}
holds for some real constant $\lambda$. Taking cohomology, we see that for \eqref{eq:KE-Ric} to hold for some $\varphi$ we must have $\lambda [\omega] = c_1(X)$, where $c_1(X)$ denotes the first Chern class of $X$, and hence (by the $dd^c$-lemma) we have that $Ric \omega - \lambda \omega = dd^c f$ for some function $f:X\rightarrow \R$. One can show that \eqref{eq:KE-Ric} is then equivalent to solving the complex Monge-Amp\'ere equation

\begin{equation}
    (\omega + dd^c \varphi)^n = e^{f - \lambda \varphi} \omega^n.
    \label{eq:KE-MA-cplx}
\end{equation}

We here consider an analogue of \eqref{eq:KE-MA-cplx} in the setting of a compact Hessian manifold.

%
\thmKE*

To prove Theorem~\ref{thm:KE-hess} we will define a functional analogous to the Ding functional in complex geometry. It will be a modified version of the affine Kantorovich functional used in previous sections and solutions to \eqref{eq:KE-hess} are staionary points of this functional. Moreover, we will provide an additional proof of Theorem~\ref{thm:KE-hess} using an alternative functional, analogous to the Mabuchi functional in complex geometry.

\begin{definition}
    Fix $\mu_0 \in \mathcal{P}(M)$ and $\nu\in \mathcal{P}(M^*)$, where $M: \mathcal{P}(M) \to \R$, where $\mathcal{P}(M)$ denotes the space of probability measures on $M$.  We let $D:C^0(M,L) \to \R$ and $M: \mathcal{P}(M)\to \R$, be defined by
    
    \begin{align}
        D(\phi) &= \int_{M^*} \left( \phi^* - \phi_0^*\right) d\nu - \frac{1}{\lambda} \log \int_M e^{-\lambda(\phi-\phi_0)} \mu_0 \\
        M(\mu) &= \lambda \inf_{\phi\in C^0(M,L)} F_{\mu,\nu}(\phi) + \int_M \log \frac{\mu}{\mu_0} d\mu,
    \end{align}
    where $F_{\mu,\nu}$ denotes the affine Kantorovich functional \eqref{eq:kantorovich}.
    
\end{definition}

\begin{remark}
    Note that the term $\int_M \log \frac{\mu}{\mu_0} d\mu$ is precisely the relative entropy of $\mu$ with respect to $\mu_0$. Thus, the functional $M$ is finite only when $\mu$ has a density with respect to $\mu_0$. In the following we will denote this density by $\rho$, i.e., $\mu = \rho \mu_0$. 
\end{remark}

We proceed by analyzing the two functionals $D$ and $M$ separately in the following two subsections. The key point in both subsections is that existence of minimizers to $D$ and $M$ will follow from the compactness result of Theorem~\ref{thm:compactness}.

\subsection{The Ding functional}

\begin{lemma}
    $D$ descends to a functional on $C^0(M,L)/\mathbb{R}$.
    \label{lem:ding-descend}
\end{lemma}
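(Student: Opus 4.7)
The plan is a direct verification that $D(\phi + c) = D(\phi)$ for every $c \in \mathbb{R}$, by tracking how each of the two summands transforms under the shift $\phi \mapsto \phi + c$. Note first that if $\phi$ is a continuous section of $L$, then so is $\phi + c$, so the operation makes sense on $C^0(M,L)$. The argument rests on the fact that the two terms scale in opposite ways and exactly cancel.

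First I would show that the Legendre transform shifts by $-c$, i.e., $(\phi+c)^* = \phi^* - c$. This is immediate from the defining supremum formula \eqref{eq:legendre_simply}: fixing $q$ in the fiber above $p$,
\[
(\phi+c)^*(p) = -q + \sup_{x \in M} q(x) - \phi(x) - c = \phi^*(p) - c.
\]
Since $\nu$ is a probability measure on $M^*$, integrating gives
\[
\int_{M^*} \bigl((\phi+c)^* - \phi_0^*\bigr) d\nu = \int_{M^*} (\phi^* - \phi_0^*) d\nu - c.
\]

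Next I would handle the log-integral term. Since $\phi - \phi_0$ is a genuine function on $M$, the shift gives $(\phi+c) - \phi_0 = (\phi - \phi_0) + c$, so
\[
-\frac{1}{\lambda} \log \int_M e^{-\lambda((\phi+c) - \phi_0)} \mu_0 = -\frac{1}{\lambda} \log \Bigl( e^{-\lambda c} \int_M e^{-\lambda(\phi-\phi_0)} \mu_0 \Bigr) = c - \frac{1}{\lambda} \log \int_M e^{-\lambda(\phi-\phi_0)} \mu_0.
\]
Adding the two contributions, the $-c$ from the Legendre term and the $+c$ from the log term cancel, yielding $D(\phi+c) = D(\phi)$.

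There is no real obstacle here; the only subtlety is to observe that the cancellation relies on $\nu$ being a probability measure (so that integration against $\nu$ preserves constants), which is part of the standing assumption in the statement of Theorem~\ref{thm:KE-hess}. Hence $D$ is well-defined as a functional on the quotient $C^0(M,L)/\mathbb{R}$.
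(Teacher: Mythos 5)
Your proof is correct and is exactly the verification the paper leaves to the reader ("it is immediate to verify that $D$ is invariant under $\phi \mapsto \phi + c$"): the Legendre transform shifts by $-c$, the log term shifts by $+c$, and they cancel. No issues.
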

\begin{proof}
    It is immediate to verify that $D$ is invariant under the action $\phi \mapsto \phi + c$ for any $c\in \R$. 
\end{proof}
Using the above lemma, in what follows we may choose a normalization of $\phi$ such that $\int_M e^{-\lambda(\phi-\phi_0)}\mu_0 = 1$.

\begin{lemma}
    $D$ is continuous as a map $C^0(M,L) \to \R$, and thus also as a map $C^0(M,L)/\R \to \R$. 
    \label{lem:ding-cont}
\end{lemma}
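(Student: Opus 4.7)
The plan is to decompose $D$ into two pieces and verify continuity of each separately. Both pieces factor through standard operations, so once we establish that the Legendre transform $\phi \mapsto \phi^*$ is a continuous map $C^0(M,L) \to C^0(M^*,-L^*)$ the rest is routine.

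First I would establish the sup-norm contraction estimate
\begin{equation*}
	\sup_{p\in M^*} |\phi^*(p) - \psi^*(p)| \leq \sup_{x\in M}|\phi(x)-\psi(x)|,
\end{equation*}
which was already used (and deferred) in the proof of Proposition~\ref{thm:weak_solutions}. It follows directly from the supremum definition of the Legendre transform in \eqref{eq:legendre_simply}: fixing $q$ in the fiber over $p$, one writes $\phi^*(p) - \psi^*(p) = \sup_x(-\Phi_q) - \sup_x(-\Psi_q)$ (where $\Psi_q = \psi - q$), and the elementary inequality $|\sup f - \sup g|\le \sup|f-g|$ gives the bound, since $\Phi_q - \Psi_q = \phi-\psi$ descends to $M$. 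Consequently, if $\phi_k \to \phi$ uniformly on $M$, then $\phi_k^* \to \phi^*$ uniformly on $M^*$, so the map $\phi \mapsto \int_{M^*}(\phi^*-\phi_0^*)d\nu$ is continuous by dominated convergence (or simply by $\nu$ having finite mass and the integrand converging uniformly).

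Next I would handle the log-term. If $\phi_k \to \phi$ in $C^0(M,L)$, then $\phi_k - \phi_0 \to \phi - \phi_0$ uniformly as continuous functions on $M$, so $e^{-\lambda(\phi_k - \phi_0)} \to e^{-\lambda(\phi - \phi_0)}$ uniformly on $M$ (the exponential is uniformly continuous on any bounded interval, and the $\phi_k - \phi_0$ are eventually uniformly bounded). Since $\mu_0$ is a finite measure we get convergence of $\int_M e^{-\lambda(\phi_k - \phi_0)}\mu_0 \to \int_M e^{-\lambda(\phi - \phi_0)}\mu_0$. The limit integral is strictly positive, as the integrand is strictly positive continuous on the compact manifold $M$ and $\mu_0$ is a probability measure, so the logarithm is continuous at this value and we obtain continuity of the second term.

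Combining the two pieces, $D$ is a sum of two continuous functionals on $C^0(M,L)$, and therefore continuous; by Lemma~\ref{lem:ding-descend} it then descends to a continuous functional on $C^0(M,L)/\mathbb{R}$ (with the quotient topology). The only mildly subtle point is the sup-norm contraction for the Legendre transform, but that comes down to the $|\sup f - \sup g|\le \sup|f-g|$ observation applied inside each fiber trivialization.
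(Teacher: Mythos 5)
Your proof is correct and follows essentially the same route as the paper: continuity of the Legendre-transform term via the sup-norm contraction $|\sup f - \sup g|\le\sup|f-g|$ (exactly the estimate deferred to the existence proof of Proposition~\ref{thm:weak_solutions}), and continuity of the log-term via uniform convergence of the exponentials and dominated convergence. Your added remark that the limiting integral is strictly positive (so the logarithm is continuous there) is a small but worthwhile detail the paper leaves implicit.
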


\begin{proof}
    Let $\phi_i \rightarrow \phi$ in the $C^0$-topology. The continuity of the first term was established in Proposition\ref{thm:monge-ampere},  continuity of the second term follows from the dominated convergence theorem.
\end{proof}

Lemmas~\ref{lem:ding-descend} and \ref{lem:ding-cont} then immediately give the following corollary.

\begin{corollary}
    $D$ has a convex minimizer.
    \label{cor:ding-min}
\end{corollary}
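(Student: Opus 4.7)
The plan is to prove existence of a minimizer by the direct method of the calculus of variations, combining the projection operator $\phi \mapsto \phi^{**}$ with the compactness result of Theorem~\ref{thm:compactness} and continuity from Lemma~\ref{lem:ding-cont}.

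First, I would take an infimizing sequence $\phi_i \in C^0(M,L)$ for $D$ and argue that we may replace each $\phi_i$ by its convex projection $\phi_i^{**}$ without increasing the value of $D$. The first term $\int_{M^*}(\phi^*-\phi_0^*)d\nu$ is unchanged under this replacement, since by standard properties of the Legendre-Fenchel transform (inherited via the identification with $\Phi_{q_0}$ of Remark~\ref{rem:leg-bi}) we have $(\phi^{**})^* = \phi^*$. For the second term, recall that $\phi^{**} \leq \phi$ pointwise. A short case analysis on the sign of $\lambda$ shows the inequality goes the right way: for $\lambda > 0$ we get $e^{-\lambda(\phi^{**}-\phi_0)} \geq e^{-\lambda(\phi-\phi_0)}$, hence $-\frac{1}{\lambda}\log \int e^{-\lambda(\phi^{**}-\phi_0)}\mu_0 \leq -\frac{1}{\lambda}\log \int e^{-\lambda(\phi-\phi_0)}\mu_0$; for $\lambda < 0$ the inequality inside the exponential flips but so does the sign of $-\frac{1}{\lambda}$, and the same conclusion holds. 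Thus $D(\phi_i^{**}) \leq D(\phi_i)$, so the sequence of convex sections $\phi_i^{**}$ is still infimizing.

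Next, since $D$ descends to $C^0(M,L)/\R$ by Lemma~\ref{lem:ding-descend}, I may normalize, for instance by requiring $\sup_M(\phi_i^{**}-\phi_0)=0$. The sections $\phi_i^{**}$ are now convex sections of $L$ in the K\"ahler class of $\phi_0$, normalized as in Theorem~\ref{thm:compactness}. By that compactness result, a subsequence converges uniformly modulo $\R$; together with the normalization this yields genuine uniform convergence $\phi_i^{**} \to \phi_\infty$ in $C^0(M,L)$. The limit $\phi_\infty$ is convex, being a uniform limit of convex sections.

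Finally, by the continuity of $D$ on $C^0(M,L)$ established in Lemma~\ref{lem:ding-cont},
\begin{equation}
D(\phi_\infty) \;=\; \lim_{i\to\infty} D(\phi_i^{**}) \;\leq\; \lim_{i\to\infty} D(\phi_i) \;=\; \inf_{C^0(M,L)} D,
\end{equation}
so $\phi_\infty$ is a convex minimizer. The main subtlety in this plan is the monotonicity step under $\phi \mapsto \phi^{**}$, where the sign of $\lambda$ must be tracked carefully; everything else is a direct application of results already in place.
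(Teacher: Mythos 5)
Your proof is correct and follows essentially the same route as the paper: project an infimizing sequence to convex sections via $\phi\mapsto\phi^{**}$ using $\phi^{***}=\phi^{*}$ and $\phi^{**}\leq\phi$ (with the sign of $\lambda$ handled exactly as in the paper), then conclude by Theorem~\ref{thm:compactness} and Lemma~\ref{lem:ding-cont}. Your ordering --- projecting the sequence before extracting a limit --- is if anything slightly more careful than the paper's phrasing, since the compactness theorem applies to convex sections.
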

\begin{proof}
    The existence of a continuous minimizer follows from compactness and continuity. To show that the minimizer can be taken convex, it suffices to show that $D(\phi^{**}) \leq D(\phi)$. But the first term of $D$ is unchanged by double Legendre transform, by the equality $\phi^{***}= \phi^*$. Further, since $\phi^{**} \leq \phi$ for any section of $L$, we also have 
    $$\frac{-1}{\lambda} \log \int_M e^{-\lambda(\phi^{**}-\phi_0)} \mu_0 \leq \frac{-1}{\lambda} \log \int_M e^{-\lambda(\phi -\phi_0)} \mu_0$$
    for any $\lambda \neq 0$.
\end{proof}
To show that \eqref{eq:KE-hess} has a solution it thus suffices to show that minimizers of $D$ are characterized by \eqref{eq:KE-hess}.

\begin{proposition}
    Let $\phi$ be a convex section of $L$. Then $D: C^0(M)$ is Gateaux differentiable at $\phi$ and
    
    \begin{equation}
        dD|_\phi = -\MA_\nu \phi + e^{-\lambda(\phi-\phi_0)} \mu_0. 
    \end{equation}
    \label{prop:ding-char}
\end{proposition}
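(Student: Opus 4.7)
The plan is to compute the Gateaux derivative term by term. Fix a convex section $\phi$ of $L$ and a continuous function $v$ on $M$, and consider $t \mapsto D(\phi + tv)$ for $t$ in a small interval around $0$. Since $D$ descends to $C^0(M,L)/\R$ by Lemma \ref{lem:ding-descend}, I may normalize $\phi$ so that $\int_M e^{-\lambda(\phi - \phi_0)} \mu_0 = 1$, which will clean up the computation of the second term.

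For the first term $\int_{M^*}((\phi + tv)^* - \phi_0^*) d\nu$, I would reuse the argument from the proof of Proposition~\ref{thm:weak_solutions}: the uniform bound \eqref{eq:legendre_supnorm} allows dominated convergence, and the pointwise derivative is given by \eqref{eq:legendre_variation}, namely $\frac{d}{dt}\big|_{t=0} (\phi + tv)^*(p) = -v(T_\phi(p))$. Performing the change of variables $x = T_\phi(p)$ and using $(T_\phi)_* \nu = \MA_\nu(\phi)$, the derivative of the first term equals $-\int_M v \, \MA_\nu(\phi)$.

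For the second term $-\frac{1}{\lambda} \log \int_M e^{-\lambda(\phi + tv - \phi_0)} \mu_0$, the integrand is uniformly bounded above and away from zero in $t$ (for $t$ in a bounded interval) by compactness of $M$ and continuity of $v$, so differentiation under the integral is justified by dominated convergence. Using the chain rule together with the chosen normalization at $t=0$ yields
\begin{equation}
\frac{d}{dt}\bigg|_{t=0} \left( -\frac{1}{\lambda} \log \int_M e^{-\lambda(\phi + tv - \phi_0)} \mu_0 \right) = \int_M v \, e^{-\lambda(\phi - \phi_0)} \mu_0. \nonumber
\end{equation}
Adding the two contributions gives $\frac{d}{dt}\big|_{t=0} D(\phi + tv) = \int_M v \bigl( -\MA_\nu \phi + e^{-\lambda(\phi - \phi_0)} \mu_0 \bigr)$, proving both Gateaux differentiability and the claimed formula for $dD|_\phi$.

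The main subtlety lies entirely in the first term: justifying passage of the derivative through $\int_{M^*}$ requires the Lipschitz estimate \eqref{eq:legendre_supnorm} for Legendre transforms under bounded perturbations, and then identifying the limit with a $\MA_\nu(\phi)$-integral requires the almost-everywhere pointwise derivative \eqref{eq:legendre_variation} together with the change-of-variables formula. Both ingredients are already established in Section~\ref{sec:ma}, so once they are invoked the computation reduces to routine differentiation under the integral sign for the entropy term.
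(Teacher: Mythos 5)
Your proposal is correct and follows essentially the same route as the paper: the derivative of the Legendre-transform term is taken directly from the computation in Proposition~\ref{thm:weak_solutions} (via \eqref{eq:legendre_supnorm}, \eqref{eq:legendre_variation} and the identification $(T_\phi)_*\nu = \MA_\nu\phi$), and the entropy-type term is differentiated under the integral after normalizing $\int_M e^{-\lambda(\phi-\phi_0)}\mu_0 = 1$. Your write-up is, if anything, slightly more explicit about the chain rule and the sign bookkeeping than the paper's.
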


\begin{proof}
    The Gateaux differential of the first term was established in \ref{thm:weak_solutions}. Consider the perturbation of $\frac{-1}{\lambda} \log \int_M e^{-\lambda(\phi-\phi_0)} \mu_0$ by a continuous function $\phi \mapsto \phi + tv$, where we normalize $\int_M e^{-\lambda(\phi-\phi_0)} \mu_0 = 1$. Since $M$ is compact, the dominated convergence theorem gives that
    
    \begin{equation}
    \begin{split}
        \frac{d}{dt}|_{t=0} \log \int_M e^{-\lambda(\phi-\phi_0 + tv)}\mu_0 = \int_M  \frac{d}{dt}|_{t=0}  e^{-\lambda (\phi-\phi_0 + tv)} \mu_0 = \int_M v  e^{-\lambda (\phi-\phi_0)} \mu_0
    \end{split}
    \end{equation}
    and the proposition follows. 
\end{proof}

We now have the necessary ingredients to give a

\begin{proof}[Proof of Theorem~\ref{thm:KE-hess}]
    Note that the case where $\lambda = 0$ is Theorem~\ref{thm:intro-geom}. When $\lambda \neq 0$ the theorem follows from Corollary~\ref{cor:ding-min} and Proposition~\ref{prop:ding-char}.
\end{proof}
Uniqueness of solutions to \eqref{eq:KE-hess} when $\lambda < 0$ are also quite easy to show.

\begin{proposition}
    The solution to \eqref{eq:KE-hess} is unique modulo $\R$, for $\lambda < 0$. 
    \label{prop:KE-hess-unique}
\end{proposition}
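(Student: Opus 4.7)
The proof plan is to establish that the Ding functional $D$ is strictly convex modulo $\R$ when $\lambda < 0$, and then invoke Proposition~\ref{prop:ding-char}. Since that proposition characterizes solutions of \eqref{eq:KE-hess} as critical points of $D$, strict convexity of $D$ will imply uniqueness modulo $\R$ directly: given two solutions $\phi$ and $\psi$, both are critical points, so the derivative of $t\mapsto D((1-t)\phi + t\psi)$ vanishes at $t=0$ and $t=1$, which is incompatible with strict convexity unless $\psi - \phi$ is constant.

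The Ding functional splits as $D = A + B$ where $A(\phi) = \int_{M^*}(\phi^* - \phi_0^*)\,d\nu$ and $B(\phi) = -\tfrac{1}{\lambda}\log\int_M e^{-\lambda(\phi - \phi_0)}\,d\mu$. Convexity of $A$ in $\phi$ is immediate: $\phi^*(p) = \sup_x([x,p] - \phi(x))$ is a supremum of expressions affine in $\phi$, hence convex, and integration against $\nu$ preserves convexity.

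The key step is strict convexity of $B$ for $\lambda<0$. Writing $|\lambda| = -\lambda > 0$ and applying H\"older's inequality along a linear path $\phi_t = (1-t)\phi_0 + t\phi_1$ with conjugate exponents $1/(1-t)$ and $1/t$ gives
$$
\int_M e^{|\lambda|(\phi_t - \phi_0)}\,d\mu = \int_M \bigl(e^{|\lambda|(\phi_0 - \phi_0)}\bigr)^{1-t}\bigl(e^{|\lambda|(\phi_1 - \phi_0)}\bigr)^{t}\,d\mu \leq \Bigl(\int_M e^{|\lambda|(\phi_0 - \phi_0)}\,d\mu\Bigr)^{1-t}\Bigl(\int_M e^{|\lambda|(\phi_1 - \phi_0)}\,d\mu\Bigr)^{t},
$$
with equality if and only if $\phi_0 - \phi_1$ is constant $\mu$-almost everywhere. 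Taking logarithms and multiplying by $-1/\lambda > 0$ preserves the strict inequality whenever $\phi_0 - \phi_1$ is non-constant on a set of positive $\mu$-measure. Combined with convexity of $A$, this yields strict convexity of $D$ along such paths.

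The main technical subtlety lies in promoting an ``almost everywhere'' conclusion to a pointwise one, i.e.\ going from $\psi - \phi$ being $\mu$-a.e.\ constant to being everywhere constant on $M$. When $\mu$ has full support, continuity of the convex sections $\phi, \psi$ immediately gives the pointwise conclusion. In general, one can instead exploit the strict convexity coming from $A$ as well: equality in the Legendre convexity inequality at $p\in M^*$ forces the suprema defining $\phi^*(p)$ and $\psi^*(p)$ to be attained at a common point, and using the full support of $\nu$ together with the biduality of Lemma~\ref{lem:bidual} one recovers $\psi = \phi + c$ on all of $M$.
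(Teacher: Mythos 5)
Your argument is correct and ultimately rests on the same two pillars as the paper's proof: convexity of both terms of $D$ along the linear segment $\psi_t$, with the decisive strictness supplied by the Legendre-transform term $\int_{M^*}(\psi_t^*-\phi_0^*)\,d\nu$. The difference is one of emphasis. You present the H\"older equality case --- strict convexity of $-\tfrac{1}{\lambda}\log\int e^{-\lambda(\phi-\phi_0)}d\mu$ unless the difference of the endpoints is constant $\mu$-a.e.\ --- as the key step, whereas the paper uses H\"older only for plain convexity of that term and extracts all strictness from the Legendre term via the uniqueness part of Proposition~\ref{thm:monge-ampere}. Since Theorem~\ref{thm:KE-hess} imposes a full-support hypothesis only on $\nu$ and none on $\mu$, your primary route genuinely fails for, say, atomic $\mu$, and, as you correctly anticipate, it is the fallback through the Legendre term (common attainment point of the suprema, full support of $\nu$, biduality) that carries the general case --- at which point your proof coincides with the paper's, and you could simply cite the uniqueness part of Proposition~\ref{thm:monge-ampere} rather than re-sketch it. What your version adds is the observation that for $\mu$ of full support the exponential term alone is already strictly convex, with no appeal to $\nu$; and your explicit critical-point formulation via Proposition~\ref{prop:ding-char} (vanishing derivative of $t\mapsto D(\psi_t)$ at $t=0,1$ versus strict convexity) is cleaner than the paper's terse ``uniqueness follows.'' One cosmetic point: in your displayed H\"older inequality the symbol $\phi_0$ is used simultaneously for the reference section and for one endpoint of the segment, which makes the first factor identically $1$; the endpoints should carry distinct names, as in the paper's $\psi_0,\psi_1$.
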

\begin{proof}
    For simplicity assume that $\lambda = -1$, let $\psi_0,\psi_1$ be convex sections of $L$, and let $\psi_t = (1-t) \psi_0 + t\psi_1$ for $t\in(0,1)$. Note that $e^{\psi_t-\phi_0}\in L^p(\mu_0)$ for any $p\in [1,\infty]$, by continuity. Using H\"olders inequality we have that
    
    \begin{equation}
        \int_M e^{(1-t)(\psi_0 - \phi_0) + t (\psi_1-\phi_0)}d\mu_0 \leq \left(\int_M e^{\psi_0 - \phi_0}d\mu\right)^{1-t} \left(\int_M e^{\psi_1-\phi_0}d\mu_0 \right)^t
    \end{equation}
    and taking logarithms shows that $t\mapsto \int_M e^{(1-t)(\psi_0 - \phi_0) + t (\psi_1-\phi_0)}d\mu_0$ is convex in $t$. But, by the uniqueness part of Proposition~\ref{thm:monge-ampere} $\int_{M^*} (\psi_t^* - \phi_0^*) d\nu$ is strictly convex in $t$ unless $\psi_0 -\psi_1$ is constant, and hence uniqueness follows. 
\end{proof}

\subsection{The Mabuchi functional}

We also outline how to achieve the same results as above using the Mabuchi functional.

\begin{proposition}
    If $\nu$ has full support, then $M$ is lower semicontinuous. 
\end{proposition}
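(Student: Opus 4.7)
The plan is to decompose the Mabuchi-type functional as $M(\mu) = \lambda G(\mu) + H(\mu\,|\,\mu_0)$, where $G(\mu) := \inf_{\phi \in C^0(M,L)} F_{\mu,\nu}(\phi)$ and $H(\mu\,|\,\mu_0) := \int_M \log(d\mu/d\mu_0)\, d\mu$ is the relative entropy (extended by $+\infty$ for $\mu$ not absolutely continuous with respect to $\mu_0$). The relative entropy is a classical weak-$*$ lower semicontinuous functional on probability measures, so the plan reduces to showing that $G$ is weak-$*$ \emph{continuous} in $\mu$.

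Upper semicontinuity of $G$ is immediate: for each continuous section $\phi$, the map $\mu\mapsto F_{\mu,\nu}(\phi)=\int_M (\phi-\phi_0)\,d\mu+\int_{M^*}(\phi^*-\phi_0^*)\,d\nu$ is affine and weak-$*$ continuous in $\mu$ (the second term being $\mu$-independent), so $G$ is an infimum of weak-$*$ continuous functions, hence concave and upper semicontinuous.

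The main work is the lower semicontinuity of $G$, for which the key tool is Theorem~\ref{thm:compactness}. Given $\mu_n \to \mu$ in weak-$*$, I would choose approximate minimisers $\phi_n$ with $F_{\mu_n,\nu}(\phi_n)\leq G(\mu_n)+1/n$. Replacing $\phi_n$ by $\phi_n^{**}$, which only decreases $F$, I may assume each $\phi_n$ is convex; by the invariance of $F$ under $\phi\mapsto \phi+c$ (the shift in $\phi^*$ being equal and opposite because both $\mu$ and $\nu$ are probability measures) I may further normalise $\sup_M(\phi_n-\phi_0)=0$. Theorem~\ref{thm:compactness} then yields a subsequence converging uniformly to a convex section $\phi_\infty$, and the non-expansiveness of the Legendre transform in the uniform norm (used already in the existence proof of Proposition~\ref{thm:monge-ampere}) gives $\phi_n^*\to\phi_\infty^*$ uniformly on $M^*$. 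Combining uniform convergence of $\phi_n-\phi_0$ with weak-$*$ convergence of $\mu_n$, and uniform convergence of $\phi_n^*-\phi_0^*$ against the fixed $\nu$, one obtains $F_{\mu_n,\nu}(\phi_n)\to F_{\mu,\nu}(\phi_\infty)\geq G(\mu)$, whence $\liminf_n G(\mu_n)\geq G(\mu)$.

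The principal obstacle is the lower semicontinuity step: without the compactness of convex sections modulo $\mathbb{R}$, approximate minimisers could escape to infinity and the argument would break down. The hypothesis that $\nu$ has full support does not seem to enter the sketch above directly; presumably it is included so that the downstream characterisation of minimisers of $M$ via the Einstein--Hessian equation can invoke the uniqueness part of Proposition~\ref{thm:monge-ampere}, where full support of $\nu$ is genuinely used.
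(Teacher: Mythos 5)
Your proof is correct and uses the same decomposition as the paper: relative entropy is classically weak-$*$ lower semicontinuous, so everything reduces to the behaviour of $G(\mu)=\inf_\phi F_{\mu,\nu}(\phi)$. Where you differ is in how you handle that first term. The paper simply cites Theorem~\ref{thm:inv-ma-cont} (continuity of the inverse Monge--Amp\`ere operator), whose proof establishes $\lim_i \inf F_i = \inf F$ along the way; you instead give a self-contained argument: upper semicontinuity because $G$ is an infimum of weak-$*$ continuous affine functionals, and lower semicontinuity by taking approximate minimisers, projecting via $\phi\mapsto\phi^{**}$, normalising, and invoking the compactness Theorem~\ref{thm:compactness} together with the non-expansiveness of the Legendre transform. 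This is exactly the mechanism underlying the paper's Theorem~\ref{thm:inv-ma-cont}, so the mathematical content is the same, but your inlined version buys a small sharpening that you correctly flag: the full-support (and absolute-continuity) hypothesis on $\nu$ is not actually needed for continuity of the \emph{value} $G(\mu)$ --- it enters Theorem~\ref{thm:inv-ma-cont} only through the uniqueness of minimisers, which is irrelevant here. One cosmetic point: to conclude $\liminf_n G(\mu_n)\geq G(\mu)$ for the full sequence you should first pass to a subsequence realising the $\liminf$ before extracting the uniformly convergent sub-subsequence; and it is worth noting explicitly that proving full continuity of $G$ (rather than one-sided semicontinuity) is what makes the argument insensitive to the sign of $\lambda$ in $M=\lambda G+H$.
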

\begin{proof}
    By Theorem~\ref{thm:inv-ma-cont}, the first term is continuous. Further the lower semicontinuity of relative entropy is well-known.
\end{proof}

\begin{proposition}
    If $\nu$ has full support and $\mu$ has a density $\rho$ with respect to $\mu_0$, then $M$ is Gateaux differentiable, and
    
    \begin{equation}
        dM|_{\mu} = \lambda( \phi-\phi_0 ) + \log \rho,
    \end{equation}
    where $\phi$ is the unique solution to $\MA_\nu \phi = \mu$. 
    \label{prop:mab-gat}
\end{proposition}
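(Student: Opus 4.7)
The plan is to differentiate $M$ at $\mu = \rho\mu_0$ along a variation $\mu_t = \mu + t\eta$, where $\eta = \sigma\mu_0$ is a bounded signed measure with $\int_M \sigma\, d\mu_0 = 0$, so that $\mu_t$ remains a probability measure of density $\rho + t\sigma$ with respect to $\mu_0$ for $t$ small. The differential will be read off as the unique function $dM|_\mu$ paired against $\eta$.

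For the first term $\lambda\inf_\phi F_{\mu_t,\nu}(\phi)$, I would run a standard envelope (Danskin-type) argument. Let $\phi_\mu$ and $\phi_{\mu_t}$ denote the unique convex sections (modulo a constant) solving $\MA_\nu\phi = \mu$ and $\MA_\nu\phi = \mu_t$, respectively; note that the pairings $\int_M (\phi_\mu - \phi_0)\, d\eta$ and $\int_M (\phi_{\mu_t} - \phi_0)\, d\eta$ are independent of the choice of normalization since $\eta$ has zero total mass. Testing the infimum at $\mu_t$ against the section $\phi_\mu$ gives the upper bound
\[
\inf_\phi F_{\mu_t,\nu}(\phi) \leq F_{\mu,\nu}(\phi_\mu) + t\int_M (\phi_\mu - \phi_0)\, d\eta,
\]
while the reverse inequality follows from
\[
\inf_\phi F_{\mu_t,\nu}(\phi) = F_{\mu_t,\nu}(\phi_{\mu_t}) \geq F_{\mu,\nu}(\phi_\mu) + t\int_M (\phi_{\mu_t} - \phi_0)\, d\eta.
\]
Dividing by $t$ and invoking Theorem~\ref{thm:inv-ma-cont} to get $\phi_{\mu_t} \to \phi_\mu$ in $C^0(M)$ as $t\to 0$, both bounds converge to $\int_M (\phi_\mu - \phi_0)\, d\eta$, contributing $\lambda(\phi-\phi_0)$ to $dM|_\mu$.

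The second term, the relative entropy $H(\mu\mid\mu_0) = \int_M \rho\log\rho\, d\mu_0$, is differentiated directly. Since $\sigma$ is bounded and $\rho$ can be taken bounded away from $0$ along the perturbation, dominated convergence yields
\[
\frac{d}{dt}\bigg|_{t=0}\int_M (\rho + t\sigma)\log(\rho + t\sigma)\, d\mu_0 = \int_M (\log\rho + 1)\sigma\, d\mu_0 = \int_M \log\rho\, d\eta,
\]
the last equality using $\int_M \sigma\, d\mu_0 = 0$. This contributes $\log\rho$ to $dM|_\mu$, completing the formula.

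The main obstacle is the envelope step: the minimizer $\phi_{\mu_t}$ depends on the parameter $t$, and without continuous dependence the sandwich argument would only yield Dini-type one-sided derivatives. The continuity upgrade is precisely Theorem~\ref{thm:inv-ma-cont}, and it is here that the full-support assumption on $\nu$ enters the proof. A minor technical point is to restrict to perturbations $\sigma$ for which $\rho + t\sigma$ remains strictly positive and bounded for small $t$, so that both terms in $M$ remain finite and the dominated convergence applies uniformly.
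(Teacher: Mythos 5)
Your proof is correct and follows essentially the same route as the paper: an envelope (Danskin-type) argument for the term $\lambda\inf_\phi F_{\mu,\nu}(\phi)$, exploiting the linearity of $F_{\mu,\nu}$ in $\mu$, together with a direct differentiation of the relative entropy. The only real difference is how the envelope argument is closed: the paper cites Danskin's theorem plus the uniqueness of the minimizer of $F_{\mu,\nu}$ from Proposition~\ref{thm:monge-ampere}, whereas you unpack the sandwich explicitly and instead invoke the continuity of the inverse Monge-Amp\`ere operator (Theorem~\ref{thm:inv-ma-cont}); both are valid.
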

\begin{proof}
    The Gateaux differential of relative entropy $d\left(\int_M \rho \log \rho d\mu_0\right) = \log \rho$ is well known.
    
    For the first term, let $\dot \mu$ be a perturbation of $\mu$, i.e., a measure such that $\int_M \dot \mu = 0$. Consider the function $f(t) = \inf_\phi F_{\mu + t\dot\mu,\nu}(\phi) := \inf_\phi F_t(\phi)$, i.e., the Mabuchi functional along the one-parameter family of measures given by $\dot \mu$, defined on some open interval around $t=0$. Then $F_t$ is convex (indeed, linear) in $t$, and since the space of convex sections modulo $\R$ is compact we have by Danskins theorem that $f$ has directional derivatives at $t=0$. In fact $f'_{\pm}(0) = \int (\phi_\pm - \phi_0 ) d\dot \mu$ where $\phi_\pm$ are some minimizers of $F_0$. But by the uniqueness part of Proposition~\ref{thm:monge-ampere}, $F_0$ has a unique convex minimizer $\phi$, and thus $f'(0) = \int (\phi -\phi_0 ) d\dot\mu$, by which the proposition follows.
\end{proof}

\begin{proof}[Proof of Theorem~\ref{thm:KE-hess}]
    First we note that since $M$ is compact, by Prokhorovs theorem $\mathcal{P}(M)$ is also compact. Hence, by lower semicontinuity and since $M(\mu)$ is not identically $\infty$ (e.g., $M(\mu_0)< \infty$), there is some minimizer $\mu$. Further we have that $\mu$ is absolutely continuous with respect to $\mu_0$, since otherwise $M(\mu) = \infty$. Thus $\mu = \rho \mu_0$, and by Proposition~\ref{prop:mab-gat} we have that $\MA_\nu \phi = \rho \mu_0$ and $-\lambda(\phi - \phi_0) = \log \rho$. Taking exponentials yields the theorem. 
\end{proof}

\section{Atomic measures and piecewise affine sections}
\label{sec:singular}
\begin{definition}
	We call a convex section $\phi: M\rightarrow L$ piecewise affine if for any compact set $K\subset \Omega$, it holds that $\Phi_p|_K$ is piecewise affine. 
\end{definition}

    Note that the above definition simply means that $\Phi$ can locally be written as the $\sup$ of finitely many affine sections $p_i \in \Gamma(\Omega,K)$. Note however that is \emph{not} a priori clear that this is equivalent to taking the $\sup$ over all deck transformations of finitely many $p_i \in L^*$, however this is essentially the content of the following theorem. 



\pwaffthm*

Note that the although the measure $\MA_\nu \phi$ depends on the choice of reference $\nu$, the condition that $\MA_\nu \phi = 0$ outside a finite set is in fact independent of $\nu$ as long as $\nu$ has a non-vanishing density. To see this, one can use the same identification \eqref{eq:m-a_equation} to identify the Monge-Amp\'ere measure with a measure on the cover $\Omega$. But this implies that $\MA_\nu \phi = 0$ if and only if $\det (\Phi_p)_{ij} = 0$, which is independent of $\nu$.

In the section that follows we will, by abuse of notation, use $\mu$ to denote both the measure on $M$, and its periodic lift to the universal cover $\tilde{M}$, which we identify with a fix convex domain in $\mathbb{R}^n$. We also identify $K^*$ with a fix convex domain in $(\R^n)^*\times \R$ by fixing $q_0\in K^*$, and letting $p$ correspond to $q_0 - p$. We further let $\Phi := \Phi_{q_0}$ and $\Phi_p = \Phi - p$. 

\begin{proof}
Fix an atomic measure $\mu$, and let $\phi$ be the solution to \eqref{eq:numaeq}. Further fix compact set $K \subset \Omega$, we aim to show that $\Phi|_K$ is piecewise affine on $K$. First we note that we may write

\begin{equation}
	\Phi(x) = \sup_{p\, \text{affine}, p \leq \Phi} p(x).
\end{equation}
We claim that it suffices to restrict the $\sup$ to

\begin{equation}
	\Phi(x) = \sup_{p(x_i)\leq \Phi(x_i)} p(x)
	\label{eq:sec-sup}
\end{equation}
as a $\sup$ over all points $x_i \in \supp \mu$. To see this, let $\tilde{\Phi}$ be the function defined in the right hand side of \eqref{eq:sec-sup}. We immediately have that $\tilde{\Phi} \geq \Phi$, by which we have that $\tilde{\Phi}^*\leq \Phi^*$. But for any point $x_i \in \supp \mu$, we also have the reverse inequality, i.e., $\tilde{\Phi}(x_i) \leq \Phi(x_i)$. Combining these observations yields that $F(\tilde{\Phi}) \leq F(\Phi)$, and by the uniqueness result in Proposition~\ref{thm:monge-ampere}, since $\Phi$ is a minimizer, we have that $\tilde{\Phi} = \Phi$.

Next we observe that since $\Phi$ is a continuous convex function, for any $x\in K$ the $\sup$ is attained at some $p \in L^*$ satisfying $\Phi(x) = p(x)$, by the Hahn-Banach theorem. More precisely, the $\sup$ is attained precisely when $p \in \partial \Phi(x)$. It follows that we may further restrict the $\sup$ to, for any $x\in K$,

\begin{equation}
	\Phi(x) = \sup_{p(x_i) \leq \Phi(x_i), p\in \partial \Phi(K)} p(x_i).
\end{equation}

Furthermore, the subdifferential image $\partial \Phi(K)$ is compact in $ K^*$, by \cite{hiriart2013convex}, Remark 6.2.3. Now fix $p\in L^*$, i.e., $p$ such that $\Phi_p$ exhausts $\Omega$. Then there is an open set $V_p \ni p$ such that $\inf_{q\in V_p} \Phi_q(x)$ also exhausts $\Omega$ (Lemma \ref{lem:uniform-exhaust}), and by compactness we may cover $\partial \Phi(K)$ by a finite collection $V_{p_j}$ of such open sets. It follows that the function

\begin{equation}
	f(y) = \inf_{x\in K} \inf_{p\in \partial \Phi(x)} \Phi_p(y) \geq \inf_{p \in \cup V_{p_j}} \Phi_p(y)
\end{equation}
also exhausts $\Omega$. But for any $y\in \Omega$, $p\in \partial \Phi(K)$ we have that $p(y) \leq \Phi(y) - f(y)$, and hence we may for $x\in K$ restrict the $\sup$ of \eqref{eq:sec-sup} to

\begin{equation}
	\Phi(x) = \sup_{p(x_i) \leq \Phi(x_i), p\in \partial \Phi(K), x_i \in \{f \leq 1\} } p(x),
\end{equation}
But the set $\{f\leq 1\}$ is compact, since $f$ exhausts $\Omega$, so $\{x \in \supp \mu, f\leq 1\}$ is finite, and thus $\Phi$ is piecewise affine on $K$. 
\end{proof}

We provide below a Lemma for convex functions in $\R^n$, which was used in Theorem~\ref{thm:pw-affine}

\begin{lemma}
    Let $f:\Omega \rightarrow \R$ be a convex exhaustion function from an open set $\Omega \subseteq \R^n$. Assume that $\Omega^* = \{f^*(p) < \infty\}$ is an open set. Then for any $p_0 \in \Omega^*$, there is an open set $U\ni p_0$ such that
    
    \begin{equation}
        g(x) := \inf_{U} f(x) - p(x)
    \end{equation}
    
    exhausts $\Omega$.
    \label{lem:uniform-exhaust}
\end{lemma}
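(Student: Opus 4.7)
The plan is to leverage the openness of $\Omega^*$ around $p_0$ to obtain a uniform linear growth estimate on $f(x) - p(x)$ as $x$ escapes any compact subset of $\Omega$. First I would fix $\delta > 0$ with $\overline{B_{2\delta}(p_0)} \subset \Omega^*$ and set $U = B_\delta(p_0)$. Since $f^*$ is convex and everywhere finite on $\Omega^*$, it is continuous on this open set and hence bounded above on the compact ball $\overline{B_{2\delta}(p_0)}$ by some constant $C$. This already yields the global lower bound $g \geq -C$ because $f(x) - p(x) \geq -f^*(p) \geq -C$ for every $p \in U$.

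For the behavior of $g$ as $|x| \to \infty$, fix a basepoint $x_0 \in \Omega$. For each $x \neq x_0$, pick a unit covector $v \in (\R^n)^*$ with $v(x - x_0) = |x - x_0|$. Then for any $p \in U$ the perturbation $p + \delta v$ still lies in $\overline{B_{2\delta}(p_0)} \subset \Omega^*$, so $f^*(p+\delta v) \leq C$. Rewriting $p(x) = (p+\delta v)(x) - \delta v(x)$ yields
$$ f(x) - p(x) \;\geq\; -f^*(p+\delta v) + \delta|x - x_0| - \delta|x_0| \;\geq\; -C + \delta |x-x_0| - \delta |x_0|, $$
uniformly in $p \in U$. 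Taking the infimum over $p$ gives $g(x) \geq \delta|x-x_0| - C - \delta|x_0|$, which tends to infinity as $|x| \to \infty$ in $\Omega$.

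To handle approach to a finite boundary point $\bar x \in \partial\Omega$, I would use that $f$ exhausts $\Omega$, so $f(x) \to \infty$ as $x \to \bar x$. Since $U$ is bounded and $x$ ranges in a bounded neighborhood of $\bar x$, the quantity $\sup_{p \in U} |p(x)|$ stays uniformly bounded, yielding $g(x) \geq f(x) - \sup_{p \in U} |p(x)| \to \infty$. Combined with the previous paragraph, this shows each sublevel set $\{g \leq c\}$ is bounded and bounded away from $\partial \Omega$, hence relatively compact in $\Omega$, i.e.\ $g$ exhausts $\Omega$.

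The only mildly delicate step is obtaining the uniform bound on $f^*$ on a closed ball inside $\Omega^*$, which rests on the standard fact that a convex function is continuous on the interior of its effective domain; the rest is a direct computation using the defining inequality $f(x) - p(x) \geq -f^*(p)$ combined with a clever choice of perturbing covector $v$.
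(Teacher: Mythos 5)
Your proof is correct and follows essentially the same route as the paper's: the same choice $U=B_\delta(p_0)$ with $\overline{B_{2\delta}(p_0)}\subset\Omega^*$, the same perturbation of $p$ by a covector of norm $\delta$ pointing toward $x$ to extract linear growth from the uniform bound on $f^*$, and the same boundary argument using that $f$ itself exhausts $\Omega$. The only step the paper records that you omit is the (lower semi)continuity of $g$, needed so that the sublevel sets $\{g\leq c\}$ are actually closed and hence compact; this is harmless here since $g(x)=f(x)-\sup_{p\in U}p(x)$ is manifestly continuous on $\Omega$, but it is worth a sentence.
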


\begin{proof}
    Without loss of generality, we assume that $p_0 = 0 \in \Omega^*$. Further we denote $f_p(x) = f(x)  - p(x)$. We claim that we may take $U = B_\delta(0)$, where $\delta > 0$ is small enough that $B_{2\delta}(0)$ is relatively compact in $\Omega^*$. The lemma follows if we can show that $\{g(x) \leq c\}$ is a closed bounded set and that $\{g(x) \leq c\} \cap \partial \Omega = \emptyset$ for every $c$, since then $\{g \leq x\}$ is a compact set in $\R^n$ contained in the open set $\Omega$. We thus proceed by showing the following three claims.

    \begin{enumerate}
        \item $g$ has bounded level sets.
        \item $g$ is continuous.
        \item $\{g(x) \leq c\} \cap \partial \Omega = \emptyset$.
    \end{enumerate}

\emph{Claim 1:} Fix $p\in B_\delta(0)$, and fix $x_0\in \Omega$ arbitrarily, and for simplicity we assume that $x_0 = 0 \in \Omega$. Let $x\in \Omega$ be arbitrary, and let $r$ be the linear function $r(y) = \delta \frac{\langle x, y \rangle}{\|x\|}$. Note that $q + r\in B_{2\delta}(0)$, and hence by assumption $f^*(q+r) \leq C_\delta$ for some constant $C_\delta$ depending only on $\delta$, by continuity of $f^*$. We then have
\begin{equation}
\begin{split}
    f_q(x) &= f_{q+r}(x) + r(x) = f_{q+r}(x) + \delta \|x\| \\
            &\geq \delta \|x\| + \inf f_{q+r}(y) = \delta \|x\| - f^*(q+r) \geq \delta \|x\| - C_\delta.
\end{split}
\end{equation}
and the first claim follows.

\emph{Claim 2:} Let $C$ be the closure of $B_\delta(0)$, and note that $g(x) = \min_{p\in C} f_p(x)$ by continuity and compactness. Thus $g(x) = f_{p_x}(x)$ for some $p_x \in C$. Fix $x\in \Omega$, and let $x_i$ be any sequence converging $x_i \rightarrow x$. Then $g(x_i) = f_{p_i}(x_i)$ for some $p_i \in C$, and by compactness we have that $p_i \rightarrow p$ for some $p\in C$, up to subsequence. But then we also have that

\begin{equation}
    g(x) \leq f_p(x) = \lim f_{p_i}(x_i) = \lim g(x_i),
\end{equation}
 by continuity of $f$ in ($p,x)$. Hence $g$ is lower semicontinuous in $x$. Upper semicontinuity follows from the fact that $g$ is defined as an $\inf$ of a family of convex functions.

\emph{Claim 3:} The first two claims show that $\{g(x) \leq c\}$ is a compact set in $\R^n$. To show the third claim it suffices to show that $g(x_i) \rightarrow \infty$ for any sequence such that $x_i \rightarrow x\in \partial \Omega$, where we may assume that $x_i$ is bounded. But for any $p\in B_\delta(0)$ we have that $f_p(x_i) = f(x_i) - p(x_i) \geq f(x_i) - \delta \|x_i\| \geq f(x_i) - C_\delta$, since $\|x_i\|$ is bounded. It follows that $g(x_i) \rightarrow \infty$.

\end{proof}

\pwaffcor*

\begin{proof}
We fix the reference measure $\nu = vol(\phi_0^*)$ as the riemannian volume form corresponding to $\phi_0^*$, and approximate $\mu$ by atomic measures $\mu_i\rightarrow \mu$. Then the solutions to $\MA_\nu \phi_i = \mu_i$ are piecewise affine, and by Theorem \ref{thm:inv-ma-cont} we have that $\phi \rightarrow \phi$ uniformly.
\end{proof}

We also note a geometric consequence of Theorem \ref{thm:pw-affine}, in that any piecewise affine convex function $\Phi: \Omega \to \mathbb{R}^n$ corresponds to a tiling of $\Omega$ by convex polytopes. Hence solving to $\MA_\nu \phi = \mu$ for an atomic measure $\mu$ yields a quasi-periodic tiling of $\Omega$.

\section{Orbifolds}
\label{sec:orbifold}
In this section we present an outline of a generalization of the main results to the setting of orbifolds. Throughout this section the setup is that of a compact affine manifold $(M)$ and the properly discontinuous affine action by a finite group $G$ on $M$. We let $X=M/G$ as a Hausdorff topological space, but since the group action $G$ is not assumed to be free $X$ is not in general a manifold. 

We call $X = M/G$ a Hessian orbifold if $M$ comes equipped with a $G$-equivariant Hessian metric $\phi$ on $M$. Note that given such a metric, the affine $\R$-bundle $L\to M$ yields a principal $\R$-bundle $L/G \to M/G$, and we denote a Hessian orbifold by the data $(M,L,G)$.

Note that sections of $L/G\to M/G$ are simply $G$-equivariant sections of $L\to M$. By letting $(M^*,L^*)$ denote the dual manifold of $(M,L)$, we as in the manifold setting may define a dual action of $G$ on $(M^*,L^*)$ and can in precisely the same way as in the manifold setting construct a dual compact Hessian orbifold $(M^*,L^*,G^*)$. Note that $G^* = G$ as groups, however we use a superscripted $*$ to indicate that $G$ acts differently on $(M^*,L^*)$. 

The extension of Theorem \ref{thm:intro-geom} can be formulated as follows

\begin{theorem}
    Let $\mu$,$\nu$ be probability measures on the compact Hessian orbifolds $(M,L,G),(M^*,L^*,G^*)$, respectively. Then the equation
    \begin{equation}
        \MA_\nu \phi = \mu
        \label{eq:MA-orb}
    \end{equation}
    has a solution. Equivalently, given $G,G^*$-invariant probability measures $\mu,\nu$ on $M,M^*$, respectively, there is a $G$-equivariant solution $\phi: M\to L$
    \begin{equation}
        \MA_\nu \phi = \mu
        \label{eq:MA-orb-upstairs}
    \end{equation}
\end{theorem}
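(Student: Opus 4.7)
The plan is to adapt the variational framework of Section~\ref{sec:ma} to the equivariant setting by restricting all constructions to $G$-equivariant objects, so that the orbifold statement reduces to the manifold theorem applied in a constrained subspace. Concretely, I would work with the upstairs formulation: let $C^0_G(M,L)$ denote the space of $G$-equivariant continuous sections of $L\to M$, and note that since $G$ acts on $M$ by affine transformations preserving $L$, the Legendre transform takes $G$-equivariant sections to $G^*$-equivariant sections of $-L^*$, and the projection $\phi \mapsto \phi^{**}$ sends $C^0_G(M,L)$ into itself and into the subspace of $G$-equivariant convex sections.

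Next, I would define the equivariant affine Kantorovich functional
\begin{equation}
    F_G(\phi) = \int_M (\phi - \phi_0)\, d\mu + \int_{M^*} (\phi^* - \phi_0^*)\, d\nu,
\end{equation}
where $\phi_0$ is a fixed $G$-equivariant Hessian metric and $\mu, \nu$ are $G, G^*$-invariant probability measures. The key point is that $G$-equivariance ensures both integrals are well-defined with respect to the invariant measures, and that $F_G(\phi + c) = F_G(\phi)$ so we may quotient by constants. To produce a minimizer, I would apply Theorem~\ref{thm:compactness} to see that the space of convex sections of $L\to M$ modulo $\R$ is compact; since $G$-equivariance is a closed condition in the uniform topology, the subspace of $G$-equivariant convex sections modulo $\R$ is also compact. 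An infimizing sequence, normalized so $\int_M (\phi_k - \phi_0)\,d\mu = 0$, then converges up to subsequence to a $G$-equivariant convex section $\phi$, and the continuity of the Legendre transform (as established in the proof of Proposition~\ref{thm:monge-ampere}) gives that $\phi$ is a minimizer of $F_G$.

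To identify the minimizer with a solution of \eqref{eq:MA-orb-upstairs}, I would repeat the variational computation: for any $G$-equivariant continuous test function $v$ on $M$, the Gateaux derivative $\frac{d}{dt}\big|_{t=0} F_G(\phi + tv)$ vanishes, yielding $\int_M v\,(d\mu - (T_\phi)_*d\nu) = 0$. The new wrinkle is that we only get this identity for $G$-invariant $v$; however, both $\mu$ and $(T_\phi)_*\nu$ are $G$-invariant (the latter because $T_\phi$ is $G$-equivariant, being the almost-everywhere inverse of an equivariant diffeomorphism), so testing against $G$-invariant functions suffices to conclude $\mu = (T_\phi)_*\nu = \MA_\nu \phi$ as measures on $M$. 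Regularity and uniqueness up to a constant follow from the corresponding parts of Proposition~\ref{thm:monge-ampere}, noting that the pointwise arguments do not interact with the group action.

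The main conceptual obstacle will be verifying that all the auxiliary constructions -- the dual bundle $L^*$, the diffeomorphism $d\phi: M\to M^*$, the Legendre transform, and the operator $T_\phi$ -- behave equivariantly under $G$. This is essentially formal: $G$ acts by affine transformations preserving $\nabla$, so it acts naturally on $\Gamma(\Omega, K)$ and commutes with the $\Pi$-action and the $\R$-action, which means the entire construction of Section~\ref{sec:dual} descends to the orbifold setting. An alternative, perhaps cleaner route I would consider is to replace the deck group $\Pi$ by the enlarged group $\tilde{\Pi}$ fitting into $1 \to \Pi \to \tilde{\Pi} \to G \to 1$ acting on the universal cover of $M$, so that $X = \Omega/\tilde{\Pi}$ and Theorem~\ref{thm:main} can be invoked directly, provided $\tilde{\Pi}$ still acts properly discontinuously (which follows from $G$ being finite and $\Pi$ acting freely and properly on $\Omega$). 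The only care required is that the action of $\tilde{\Pi}$ need not be free, so one obtains an orbifold rather than a manifold quotient, but the upstairs equation on $\Omega$ is unchanged.
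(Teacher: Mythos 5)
Your proposal is correct and follows essentially the same route as the paper: restrict the variational framework to $G$-equivariant sections, use the compactness of convex sections modulo $\R$ (which passes to the closed equivariant subspace), minimize the Kantorovich functional against the invariant measures, and derive the equation from the Euler--Lagrange computation. Your explicit observation that testing only against $G$-invariant perturbations suffices because both $\mu$ and $(T_\phi)_*\nu$ are $G$-invariant fills in a step the paper's outline leaves implicit.
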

The technique used to prove the above theorem follows the same principle as that in the manifold setting. Instead of producing solutions to \eqref{eq:MA-orb} directly, one may look for equivariant solutions to a Monge-Ampére equation on the covering manifolds $M,M^*$. The key point to note is the correspondence in the manifold setting of Hessian metrics on $M$ with equivariant convex exhaustion functions on the universal cover $\Omega$, and the extension to the orbifold setting can be seen as also requiring equivariance with respect to $G$. However, a subtle point is that to guarantee that the Kantorovich functional is somewhere finite, it is crucial that we are integrating against finite measures on $M,M^*$, whereas the corresponding measures on $\Omega,\Omega^*$ are only locally finite. In the manifold setting this correspondence between locally finite measures and probability measures is given by pushing forward under the local homeomorphisms given by the covering map $\Omega \to M$. However in the orbifold setting the quotient map $M \to M/G$ is not a covering map, and does not give local homeomorphisms near the fixed points of the action of $G$. Thus, there seems to be no obvious way to construct a probability measure on $X$ given a locally finite measure on $\Omega$. This lack of correspondence for probabilty measures is also the reason why we are not capable of dealing with non-finite groups $G$. 

Anyway, since we make the assumption that $X = M/G$ is the quotient of a compact Hessian manifold by a finite, we may push forward any probability measure on $M$ by the quotient map to yield a probability measure on $X$ (and similarly on $M^*$), and any probability measure on $X$ arises in this way.  Hence, given two $G$-equivariant measures $\mu,\nu$ on $M,M^*$, pushing forward to probability measures $\mu_X,\nu_X$ yields a Kantorovich functional

\begin{equation}
    F(\phi) = \int_X (\phi - \phi_0) d\mu_X + \int_{X^*} (\phi^* - \phi_0^*) d\nu_X
\end{equation}
The arguments in the manifold setting can then be repeated, mutatis mutandis, to yield the existence of a convex minimizer to $F$, corresponding to a convex minimizer of the Kantorovich functional on $M,M^*$ under the constraint of $G$-equivariance.

\bibliographystyle{plain}
\bibliography{legendre}

\end{document}